\newtheorem{theorem}{Theorem}[section]
\newtheorem{lemma}[theorem]{Lemma}
\newtheorem{proposition}[theorem]{Proposition}
\newtheorem{corollary}[theorem]{Corollary}
\theoremstyle{definition}
\newtheorem{definition}[theorem]{Definition}
\newtheorem{example}[theorem]{Example}
\theoremstyle{remark}
\newtheorem{remark}[theorem]{Remark}
\numberwithin{equation}{section}
\begin{document}
	\begin{center}
		{\large{\textbf{ Curvatures in contravariant warped product space}}}
	\end{center}
	\begin{center}
		{ Pankaj Kumar, Buddhadev Pal and Santosh Kumar\footnote{The Second author is supported by UGC JRF of India, Ref. No. 1068/ ( CSIR-UGC NET JUNE 2019) \\$^{1}$Corresponding author}}
	\end{center}
	\begin{center}
		Department of Mathematics\\
		Institute of Science\\
		Banaras Hindu University\\
		Varanasi-221005, India\\
			E-mail: pankaj.kumar14@bhu.ac.in\\
		E-mail: pal.buddha@gmail.com\\
		E-mail: thakursantoshbhu@gmail.com
	\end{center}
	\begin{center}
		\textbf{Abstract}
		\end{center}
		In this article,  we introduce  the sectional curvature in contravariant warped product space $(M= M_{1}\times_{f_{1}}M_{2},\Pi,g^{f_{1}})$, where $\Pi=\Pi_1+\nu_{1}\Pi_2$). After that we find the sectional curvature of $M$ for which  $M_{1}$ and $M_{2}$ are Poisson manifolds  of positive sectional curvatures. In dual space of $M$, we introduce the notion of  null, spacelike, timelike $1 -$ forms and  then by using these forms, qualar curvature is defined. Finally, as an  examples we obtain  the sectional curvatures for $M_{1} = H_{1}^2$, $M_{2} = S_{0}^2 , E_{2}^2$   and qualar curvature for $M$. 
	
	{\textbf{Keywords:}} Contravariant warped product, Poisson manifold, qualar curvature, sectional curvature, Laplacian.\\
	
	\textbf{2020 Mathematics Subject Classification:} 53D17, 53C50.
	\section{\textbf{Introduction}}
	
	 R.L. Bishop and B. O’Neill \cite{lota}, provided the notion of warped product space to establish examples on complete Riemannian manifolds with negative sectional curvature.
	Suppose $(M_{1},\bar{g_{1}})$ and $(M_{2},\bar{g_{2}})$ are two pseudo-Riemannian manifolds with a positive smooth function $f_{1}$ on $M_{1}$. If $\pi_{1} : M_{1} \times M_{2} \rightarrow M_{1}$ and $\pi_{2} : M_{1} \times M_{2} \rightarrow M_{2}$ are the natural projections then the warped product $M=M_{1} \times_{f_1} M_{2}$ is the product manifold  $M_{1}\times M_{2}$ equipped with the metric
	\begin{align*}
	\bar{g}^{f_{1}}=\pi_{1}^*(\tilde{g_{1}})+ (f_{1}\circ\pi_1)^2\pi_{2}^*(\tilde{g_{2}}),
	\end{align*}
		where $^*$ stands for the pull-back operator. The ordered pair $(M,g)$ is said to be warped product space. Here $M_{1}$, $M_{2}$ and $f_{1}$ are said to be base space, fiber space and warping function of $M$ respectively.
\par
Poisson \cite{sdp}, introduced a bracket in classical dynamics known as Poisson bracket. After that Lie \cite{sli}, started to study the geometric properties of this bracket. From then on Poisson geometry has become an active field of research. I. Vaisman \cite{ivs}, provided the concept of contravariant derivative on a Poisson manifold. Afterward R. L. Fernandes \cite{rfl}, characterized many results on Poisson manifold with contravariant connection. M. Boucetta \cite{mbo,mbo2}, provided the relation between pseudo-Riemannian metric and Poisson structure by using  the concept of compatibility and introduced the notion of pseudo-Riemannian Poisson manifold. Z. Saassai \cite{zs}, recently shown that the classification of Laplace operator and some other differential operators acting on differential forms.
\par
In \cite{rnm,8yar}, authors explored many explicit formulations for product manifold of Poisson manifolds equipped with product Poisson structure, contravariant warped metric and warped bivector field. Currently in \cite{8bpk}, authors introduced the notions of contravariant Einstein warped product space and Einstein Poisson warped product space.
\par 
The notion of null sectional curvature was introduced by S. G. Harris in \cite{8sgh}.
We know that for pseudo-Riemannian manifolds there exists null vector fields. Moreover, the sectional curvature of the plane spanned  by any of these null vector fields and non null vector field is known as a null sectional curvature.  The null sectional curvature for a plane spanned by null vector and spacelike vector is independent of the choice of spacelike vector. In \cite{8ahb}, authors study the geometrical interpretation of the null sectional curvature in a Lorentzian manifold.  The qualar curvature is defined as the sum of the sectional curvatures of the some plane sections. The notion of qualar curvature was introduced by M. Nardmann  \cite{8mnd}. M. G\"{u}lbahar \cite{8mgl}, in 2020 study the qualar  curvature of pseudo-Riemannian manifolds and pseudo-Riemannian submanifolds. Also, the author  established the relation between qualar curvature and null sectional curvature in his paper.
 \par
 In this paper, we are interested to characterize the explicit form of sectional curvature for two non degenerate independent one forms on contravariant warped product space and Riemannian Poisson warped product space. Additionally, we have investigated Qualar curvatures and null sectional curvatures for the same spaces. In relation with the proved statements, we comment some examples.
\par	
     In Section 2, we recall some basic concepts about contravariant warped product space $(M= M_{1}\times_{f_{1}} M_{2}, \Pi^{\nu_1},\tilde{g}^{f_{1}})$. In Section 3, some properties of $M_{1}$ and $M_{2}$ by using  sectional curvature of $M$ are studied. The Laplacian for a smooth function on $M$ is also obtained.  In Section 4, we introduce  the null, spacelike and timelike forms in dual space of $M$. Then by using these  forms, we find the null sectional curvature and qualar curvature of $M$. In final section, we give examples where the qualar curvatures  for $(M= H_{1}^2\times_{f_{1}} E_{2}^2, \Pi^{\nu_1},\tilde{g}^{f_{1}})$, $(M= H_{1}^2\times_{f_{1}} S_{0}^2, \Pi^{\nu_1},\tilde{g}^{f_{1}})$, and sectional curvatures by using Poisson tensor in $H_{1}^2$, $E_{2}^2$ and $S_{0}^2$ are investigated.
	\section{\textbf{Preliminaries and some results}}
The pair $\left( M_{1}, \lbrace . , .\rbrace\right) $ is known as a Poisson manifold, where $M_{1}$ is a manifold and $\lbrace . , .\rbrace$ is a Poisson bracket on $M_{1}$. For any $\left( M_{1}, \lbrace . , .\rbrace\right)$   there exists a bivector field $\Pi_{1} \in \Gamma\left(\Lambda^2 TM_{1} \right) $, such that 
$$\lbrace f_{1} , f_{2}\rbrace = \Pi_{1}\left( df_{1}, df_{2}\right), \qquad \forall \; f_{1}, f_{2}   \in C^\infty(M_{1}).$$
If Schouten bracket of $\Pi_{1}$ in $\left( M_{1}, \lbrace . , .\rbrace\right)$ is zero then $\Pi_{1}$ is called a Poisson tensor. The Schouten bracket of $\Pi_{1}$ in $\left( M_{1}, \lbrace . , .\rbrace\right)$, is defined as
$$\frac{1}{2} \left[ \Pi_{1}, \Pi_{1}\right]_{S}\left(df_{1}, df_{2}, df_{3} \right)   = \lbrace \lbrace f_{1} , f_{2}\rbrace,  f_{3}\rbrace +  \lbrace \lbrace f_{2} , f_{3}\rbrace,  f_{1}\rbrace  + \lbrace \lbrace f_{3} , f_{1}\rbrace,  f_{2}\rbrace,$$
for all $f_{1}, f_{2}, f_{3} \in C^\infty(M_{1})$. The Poisson manifold  $\left( M_{1}, \lbrace . , .\rbrace\right)$ is also written as $\left( M_{1}, \Pi_{1}\right)$, where $\Pi_{1}$ is a Poisson tensor.

Corresponding to bivector field $\Pi_{1}$ on a manifold $M_{1}$, we can take a natural homomorphism  $\sharp_{\Pi_{1}}: T^* M_{1} \mapsto TM_{1}$,  such that 
$$\eta_{1}\left(\sharp_{\Pi_{1}} (\omega_{1}) \right) =  \Pi_{1}\left(\omega_{1}, \eta_{1} \right), \qquad \forall \; \omega_{1}, \eta_{1} \in \Gamma\left(  T^* M_{1} \right).$$  
The homomorphism $\sharp_{\Pi_{1}}$ is known as sharp map (anchor map) in $M_{1}$.

The Lie bracket on $\Gamma\left(  T^* M_{1} \right)$ of $\left( M_{1}, \Pi_{1}\right)$, is given by
$$\left[ \omega_{1}, \eta_{1} \right]_{\Pi_{1}} = \mathcal{L}_{\sharp_{\Pi_{1}} \left( \omega_{1}\right) } \eta_{1} - \mathcal{L}_{\sharp_{\Pi_{1}} \left( \eta_{1}\right) } \omega_{1} - d\left( \Pi_{1} (\omega_{1}, \eta_{1})\right), \qquad \forall \; \omega_{1}, \eta_{1} \in \Gamma\left(  T^* M_{1} \right).$$

The relation between Lie bracket $\left[ ., . \right]_{\Pi_{1}}$ on $\Gamma\left(  T^* M_{1} \right)$ and Lie bracket $\left[ ., . \right]$ on $\Gamma\left(  T M_{1} \right)$ with the help of anchor map $\sharp_{\Pi_{1}}$, is given by
$$\sharp_{\Pi_{1}} \left( \left[ \omega_{1}, \eta_{1} \right]_{\Pi_{1}}\right) = \left[ \sharp_{\Pi_{1}} (\omega_{1}),  \sharp_{\Pi_{1}} (\eta_{1}) \right], \qquad \forall \; \omega_{1}, \eta_{1} \in \Gamma\left(  T^* M_{1} \right).$$

Let $\left( M_{1}, \bar{g}_{1}\right) $ be a  pseudo-Riemannian manifold of index $q_{1}$ and dimension  $k_{1}$. Then, we can  define a  isomorphism (musical isomorphism) from cotangent  bundle to tangent bundle  of $M_{1}$, such that   
$$b_{\bar{g}_1} : TM_{1} \mapsto T^* M_{1}, \qquad Y_{1} \mapsto \bar{g}_{1}\left( Y_{1}, .\right), \qquad \forall \;Y_{1} \in \Gamma\left(  T M_{1} \right),$$
and its inverse map
$\sharp_{\bar{g}_{1}} : T^* M_{1} \mapsto TM_{1}$,  $\omega_{1} \mapsto \sharp_{\bar{g}_{1}} (\omega_{1})$ such that $\omega_{1} (Y_{1}) = \bar{g}_{1}\left(\sharp_{\bar{g}} (\omega_{1}), Y_{1} \right)$. The cometric  $\tilde{g}_{1}$ of $\bar{g}_{1}$ is defined as 
$$\tilde{g}_{1}(\omega_{1}, \eta_{1} )   =  \bar{g}_{1}\left(\sharp_{\bar{g}_1} (\omega_{1}),  \sharp_{\bar{g}_1} (\eta_{1}) \right).$$

Now, if $\lbrace \frac{\partial}{\partial x^1}, . . . ,  \frac{\partial}{\partial x^{k_{1}}} \rbrace$ and $\lbrace dx_1, . . . , dx_{k_{1}}\rbrace$ are the basis of $\Gamma\left(  TM_{1} \right)$ and $\Gamma\left(  T^* M_{1} \right)$ respectively, then  we can define musical isomorphism  $\flat_{\bar{g}_{1}}$ and its inverse  $\sharp_{\bar{g}_{1}}$, by $\flat_{\bar{g}_{1}} \left(\frac{\partial}{\partial x^i} \right)   = dx^i  $,  $\sharp_{\bar{g}_{1}}\left(  dx_i  \right) =  \frac{\partial}{\partial x^i} $, for all $i \in \lbrace 1, . . . k_{1} \rbrace$, and
\begin{equation}\label{8.2.1}
	\tilde{g}_{1}(dx_i, dx_i )   =  \bar{g}_{1}\left(\sharp_{\bar{g}_{1}} (dx_i),  \sharp_{\bar{g}_{1}} (dx_i) \right) = \bar{g}_{1}\left(\frac{\partial}{\partial x^i},  \frac{\partial}{\partial x^i} \right). 
\end{equation}
Thus from equation (\ref{8.2.1}), we can generalize the idea of non-degenerate, spacelike and timelike vector fields to the duel space   of $\Gamma\left( TM_{1}\right)$. Therefore, a one form $\eta_{1} \in \Gamma\left( T*M_{1}\right)$ is said to be  

(1) Spacelike, if 	$\tilde{g}_{1}(\eta_{1},\eta_{1} )   =  \bar{g}_{1}\left(\sharp_{\bar{g}_1} (\eta_{1}),  \sharp_{\bar{g}_1} (\eta_{1}) \right) > 0,$

(2) Timelike, if $\tilde{g}_{1}(\eta_{1},\eta_{1} )   =  \bar{g}_{1}\left(\sharp_{\bar{g}_1} (\eta_{1}),  \sharp_{\bar{g}_1} (\eta_{1}) \right) < 0,$

(3) Lightlike, if $\tilde{g}_{1}(\eta_{1},\eta_{1} )   =  \bar{g}_{1}\left(\sharp_{\bar{g}_1} (\eta_{1}),  \sharp_{\bar{g}_1} (\eta_{1}) \right) = 0.$\\

For a field endomorphism $J_{1} : T^* M_{1} \rightarrow T^* M_{1}$, the relation between bivector field $\Pi_{1}$ and cometric $\tilde{g}_1$ of $\left( M_{1}, \tilde{g}_{1}\right) $, is given as
$$\Pi_{1} (\omega_{1}, \eta_{1}) = \tilde{g}_{1}\left(J_{1} \omega_{1}, \eta_{1} \right) = - \tilde{g}_{1}\left( \omega_{1},J_{1} \eta_{1} \right), \qquad \forall  \; \omega_{1}, \eta_{1} \in \Gamma\left(  T^* M_{1} \right).  $$

The contravariant derivative of curvature and torsion tensor  with respect to contravariant connection $\mathcal{D}^{M_{1}}$ on Poisson manifold $\left( M_{1}, \Pi_{1}\right)$, are defined by the relations
$$\mathcal{T}^1(\omega_1,\eta_1)=\mathcal{D}^{M_{1}}_{\omega_1}\eta_1-\mathcal{D}^{M_{1}}_{\eta_1}\omega_1-[\omega_{1},\eta_{1}]_{\Pi_{1}},$$
$$\mathcal{R}^1(\omega_1,\eta_1)\gamma_1=\mathcal{D}^{M_{1}}_{\omega_1}\mathcal{D}^{M_{1}}_{\eta_1}\gamma_1-\mathcal{D}^{M_{1}}_{\eta_1}\mathcal{D}^{M_{1}}_{\omega_1}\gamma_1-\mathcal{D}^{M_{1}}_{[{\omega_1},{\eta_1}]_\Pi}\gamma_1,$$
for all $\omega_{1}, \eta_{1}, \gamma_1 \in \Gamma\left(  T^* M_{1} \right)$. There exists a unique contravariant connection (Levi-Civita connection) $\mathcal{D}^{M_{1}}$ on $\left( M_{1}, \Pi_{1}\right)$, with respect to pseudo-Riemannian metric $\tilde{g}$, which is torsion free $\left(\mathcal{T}^1(\omega_1,\eta_1) = 0 \right)$,  and satisfies the condition 
$$\sharp_{\Pi_{1}}(\omega_{1})\tilde{g}_{1}(\eta_{1},\gamma_{1})=\tilde{g}_{1}(\mathcal{D}^{M_{1}}_{\omega_{1}}\eta_{1},\gamma_{1})+\tilde{g}_{1}(\eta_{1},\mathcal{D}^{M_{1}}_{\omega_{1}}\gamma_{1}), \qquad \forall\; \omega_{1}, \eta_{1}, \gamma_{1} \in \Gamma\left(  T^* M_{1} \right).$$

Let $\omega_1,\eta_1$ are linearly independent  covectors of $ T_{p}^* M_{1}$ at $p \in M_{1}$, and $\lbrace dx_1, . . . , dx_{k_{1}}\rbrace$ are local orthonormal basis of $ T_{p}^* M_{1}$. Then sectional curvature $\left(\mathcal{K}_p^1(\omega_1,\eta_1) \right)$, Ricci curvature $\left(Ric_p^1 (\omega_1,\eta_1) \right) $ and scalar curvature $S_{p}^1$ on ${M_{1}}$, will be
\begin{equation}\label{8.2.2}
	\begin{cases}
		\mathcal{K}_p^1(\omega_1,\eta_1)=\frac{\tilde{g}_{{1}_(p)}(\mathcal{R}^1_p(\omega_1,\eta_1)\eta_1,\omega_1)}{\tilde{g}_{{1}_(p)}(\omega_1,\omega_1)\tilde{g}_{{1}_(p)}(\eta_1,\eta_1)-g_p(\omega_1,\eta_1)^2},\\
		Ric^1_p(\omega_1,\eta_1)=\displaystyle\sum_{j=1}^{k_{1}}\tilde{g}_{{1}_(p)}(\mathcal{R}^1_p(\omega_1,dx_j)dx_j,\eta_1),\\
		S^1_p=\displaystyle\sum_{i=1}^{k_{1}}Ric^1_p(dx_i,dx_i).
	\end{cases}
\end{equation}

A function $f_{k}   \in C^\infty(M_{1})$ is said to be a Casimir function if 
$$\lbrace f_{k}, f_{i}\rbrace = 0\qquad \forall \; f_{i}   \in C^\infty(M_{1}).$$

The function $f_{k}   \in C^\infty(M_{1})$, will be a Casimir function if and only if $J_{1}df_{k} = 0$.
\begin{definition}
	Let $\tilde {g}^{f_1}$ be a cometric  of the warped metric $\bar{g}^{f_{1}}=\pi_{1}^*(\bar{g}_1)+(f_{1}^h)^2\pi_{2}^*(\bar{g}_2)$, on product manifold $M = M_{1} \times M_{2}$, where $f_{1}^h=f_{1}\circ\pi_{1}$ is the horizontal lift of a positive smooth function $f_{1}$ from $M_{1}$ to $M$. Then the ordered pair $(M= M_{1}\times_{f_1}  M_{2} , \tilde{g}^{f_{1}})$ is said to be a contravariant warped product space of the warped product space $(M= M_{1}\times_{f_1}  M_{2} , \bar{g}^{f_{1}})$.
\end{definition}

 \subsection{Contravariant warped product space}
Let $\left( M_{1}, \Pi_{1}\right)$ and $\left( M_{2}, \Pi_{2}\right)$ are Poisson manifolds equipped with semi-Riemannian metric (cometric) $\bar{g}_{1}$  ($\tilde{g}_1$) and $\bar{g}_{2}$  ($\tilde{g}_2$), respectively. Then if  $\mathcal{D}^{M_{1}}$ and $\mathcal{D}^{M_{2}}$ are the contravariant Levi-Civita connections associated with pairs $(\tilde{g}_1,\Pi_1)$ and $(\tilde{g}_2,\Pi_2)$, we can associate a Levi-Civita connections $\mathcal{D}$ with the pair $(\tilde{g}^{f_{1}},\Pi^{\nu_{1}})$ on contravariant  warped product space $(M= M_{1}\times_{f_{1}} M_{2}, \tilde{g}^{f_{1}})$, where $\tilde{g}^{f_{1}}=\tilde{g}_1^h+\frac{1}{({f_{1}}^h)^2}\tilde{g}_2^v$, $\Pi^{\nu_{1}}=\Pi_1+\nu_{1}\Pi_2$, $\nu_{1} \in C^{\infty} M_{1}$ and ${f_{1}}$ be a positive smooth function on $M_{1}$. The lifts of smooth functions, vectors, covectors, metric and cometric from $\left( M_{1}, \Pi_{1}, \tilde{g}_1 \right)$, and $\left( M_{2}, \Pi_{2}, \tilde{g}_2\right)$  to $(M= M_{1}\times_{f_{1}} M_{2}, \Pi^{\nu_1},\tilde{g}^{f_{1}})$ corresponding to projection maps $\pi_{1}:M \rightarrow M_{1}$ and $\pi_{2}: M \rightarrow M_{2}$, are given by 

(1) If ${f_{1}}, \nu_{1} \in C^{\infty} M_{1}$, then the horizontal lifts of $\left({f_{1}}, \nu_{1} \right)$ to $M$ is  $f_{1} \circ \pi_{1} = f_{1}^h , \nu_{1} \circ \pi_{1} = \nu_{1}^h \in C^{\infty} M$.

(2) Let $X_{1}$ be a smooth section of $TM_{1}$ then the horizontal lift of $X_{1}$ to $M$ is a vector field $X_{1}^h$ in $TM$, whose value at point $(p_{1},p_{2}) \in M$ ($p_{i} \in M_{i}$ for $i = 1,2$), is the horizontal lift of tangent vector $(X_{1})_{p} \in \Gamma (TM_{1})$ to $(p_{1},p_{2})$. Similarly we can vertically lift the smooth section of $TM_{2}$ to $TM$.

Now, as $d\pi_{1}:TM \rightarrow TM_{1}$ and $d\pi_{2}: TM \rightarrow TM_{2}$, therefore for horizontal and vertical lifts of vector fields, we have
$$d_{(p_{1},p_{2})}\pi_{1}(X_{(p_{1},p_{2})}^h)=X_{p_1}, \qquad d_{(p_{1},p_{2})}\pi_{2}(X_{(p_{1},p_{2})}^h)=0, $$
$$d_{(p_{1},p_{2})}\pi_{1}(Y_{(p_{1},p_{2})}^v)= 0, \qquad d_{(p_{1},p_{2})}\pi_{2}(Y_{(p_{1},p_{2})}^v)=Y_{p_{2}} .$$

(3) Let $\alpha_{1}$ be a smooth section of $T^*M_{1}$, then the horizontal lift of $\alpha_{1}$ to $M$ is $\pi_{1}^*(\alpha_{1})=\alpha_{1}^h$, such that $(\alpha_1^h)(X)=\alpha_1(d\pi_{1}(X)), \forall\; X\in\Gamma(T(M_{1}\times M_{2}))$. Similarly, for vertical lift of smooth section $\alpha_{2}$ of $T^*M_{2}$,  $\pi_{2}^*(\alpha_{2})=\alpha_{2}^v \in T^*M$.\\
\par
Let $\bar{g}^{f_1}=\pi_{1}^*(\bar{g}_1)+(f_{1}^h)^2\pi_{2}^*(\bar{g}_2)$ be a warped metric and $\tilde {g}^{f_1} = \tilde{g}_{1}^h+\frac{1}{(f_{1}^h)^2}\tilde{g}_2^v$ be its  cometric. Then for the horizontal and vertical lifts of vector fields and one forms are given by respectively 
\begin{equation*}
	\begin{cases}
	
		\bar{g}^{f_1}(A_1^h,B_1^h)=\bar{g}_1(A_1,B_1)^h,\\
		\bar{g}^{f_1}(A_1^h,B_2^v)=\bar{g}_1(A_2^v,B_1^h)=0,\\
		\bar{g}^{f_1}(A_2^v,B_2^v)=(f_{1}^h)^2\bar{g}_2(A_2,B_2)^v,
	\end{cases}	
\end{equation*}
where $A_{1}, B_{1} \in\Gamma(TM_{1})$ and $A_{2}, B_{2} \in\Gamma(TM_{2})$. And
\begin{eqnarray}\label{8.2.3}
	\left\{
	\begin{array}{ll}
	\tilde{g}^{f_1}(\omega_1^h,\gamma_1^h)=\tilde{g}_1(\omega_1,\gamma_1)^h,\\
	\tilde{g}^{f_1}(\omega_1^h,\gamma_2^v)=\tilde{g}_1(\omega_2^v,\gamma_1^h)=0,\\
	\tilde{g}^{f_1}(\omega_2^v,\gamma_2^v)=\frac{1}{(f_{1}^h)^2}\tilde{g}_2(\omega_2,\gamma_2)^v.
	\end{array}
	\right.
\end{eqnarray}
where $\omega_{1}, \gamma_{1} \in\Gamma(T^*M_{1})$ and $\omega_{2}, \gamma_{2} \in\Gamma(T^*M_{2})$.\\
\par
Next we are extracting few results from \cite{8bpk, 8yar}, which are  used  in our paper  

\begin{proposition}\cite{8yar}
	The contravariant Hessian $H_{\Pi}^\varphi$ of $(0,2)$-type tensor field  $\varphi$ on $\left( M_{1}, \Pi_{1}\right) $, satisfies the relation
	\begin{equation*}
		H_{\Pi_{1}}^\varphi(\omega_{1},\eta_{1})=\sharp_{\Pi_{1}}(\omega_{1})(\sharp_{\Pi_{1}}(\eta_{1})(\varphi))-\sharp_{\Pi_{1}}(\mathcal{D}^{M_1}_{\omega_{1}}\eta_{1})(\varphi)=-g(\mathcal{D}^{M_1}_{\omega_{1}}J_{1}d\varphi,\eta_{1}).
	\end{equation*}
	Moreover, for Poisson tensor $\Pi_{1}$,  $H_{\Pi_1}^\varphi$ is symmetric.
\end{proposition}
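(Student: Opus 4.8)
The plan is to read the first displayed line as the \emph{definition} of the contravariant Hessian $H_{\Pi_1}^{\varphi}$ of the smooth function $\varphi$ (so that $H_{\Pi_1}^{\varphi}$ is the $(0,2)$-type tensor field $(\omega_1,\eta_1)\mapsto \sharp_{\Pi_1}(\omega_1)(\sharp_{\Pi_1}(\eta_1)(\varphi))-\sharp_{\Pi_1}(\mathcal{D}^{M_1}_{\omega_1}\eta_1)(\varphi)$), and then to extract the two genuine assertions: the identification with $-\tilde{g}_1(\mathcal{D}^{M_1}_{\omega_1}J_1 d\varphi,\eta_1)$, and symmetry when $\Pi_1$ is a Poisson tensor. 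Everything will follow from the three structural facts recalled in Section~2: the anchor relation $\gamma_1(\sharp_{\Pi_1}(\omega_1))=\Pi_1(\omega_1,\gamma_1)$, the endomorphism identity $\Pi_1(\omega_1,\eta_1)=\tilde{g}_1(J_1\omega_1,\eta_1)=-\tilde{g}_1(\omega_1,J_1\eta_1)$, and the defining properties of the contravariant Levi-Civita connection $\mathcal{D}^{M_1}$ (metric compatibility and vanishing torsion $\mathcal{T}^1$).

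First I would record the bridge identity
\begin{equation*}
\sharp_{\Pi_1}(\eta_1)(\varphi)=(d\varphi)\big(\sharp_{\Pi_1}(\eta_1)\big)=\Pi_1(\eta_1,d\varphi)=-\tilde{g}_1(\eta_1,J_1 d\varphi),
\end{equation*}
valid for every $\eta_1\in\Gamma(T^*M_1)$, by chasing the anchor relation with $\gamma_1=d\varphi$ and then inserting the $J_1$-formula. Substituting this into the definition of $H_{\Pi_1}^{\varphi}$ gives
\begin{equation*}
H_{\Pi_1}^{\varphi}(\omega_1,\eta_1)=-\sharp_{\Pi_1}(\omega_1)\big(\tilde{g}_1(\eta_1,J_1 d\varphi)\big)+\tilde{g}_1(\mathcal{D}^{M_1}_{\omega_1}\eta_1,J_1 d\varphi).
\end{equation*}
I would then expand the first term using metric compatibility, $\sharp_{\Pi_1}(\omega_1)\tilde{g}_1(\eta_1,J_1 d\varphi)=\tilde{g}_1(\mathcal{D}^{M_1}_{\omega_1}\eta_1,J_1 d\varphi)+\tilde{g}_1(\eta_1,\mathcal{D}^{M_1}_{\omega_1}(J_1 d\varphi))$. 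The $\tilde{g}_1(\mathcal{D}^{M_1}_{\omega_1}\eta_1,J_1 d\varphi)$ terms cancel, leaving $H_{\Pi_1}^{\varphi}(\omega_1,\eta_1)=-\tilde{g}_1(\eta_1,\mathcal{D}^{M_1}_{\omega_1}(J_1 d\varphi))=-\tilde{g}_1(\mathcal{D}^{M_1}_{\omega_1}J_1 d\varphi,\eta_1)$, which is the asserted second equality.

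For symmetry I would compute the antisymmetric part directly from the definition:
\begin{equation*}
H_{\Pi_1}^{\varphi}(\omega_1,\eta_1)-H_{\Pi_1}^{\varphi}(\eta_1,\omega_1)=\big[\sharp_{\Pi_1}(\omega_1),\sharp_{\Pi_1}(\eta_1)\big]\varphi-\sharp_{\Pi_1}\big(\mathcal{D}^{M_1}_{\omega_1}\eta_1-\mathcal{D}^{M_1}_{\eta_1}\omega_1\big)\varphi,
\end{equation*}
where the first term is the commutator of the vector fields $\sharp_{\Pi_1}(\omega_1)$ and $\sharp_{\Pi_1}(\eta_1)$ acting on $\varphi$. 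Vanishing torsion of $\mathcal{D}^{M_1}$ replaces $\mathcal{D}^{M_1}_{\omega_1}\eta_1-\mathcal{D}^{M_1}_{\eta_1}\omega_1$ by the cotangent Lie bracket $[\omega_1,\eta_1]_{\Pi_1}$, and the anchor is a morphism of Lie brackets, $\sharp_{\Pi_1}([\omega_1,\eta_1]_{\Pi_1})=[\sharp_{\Pi_1}(\omega_1),\sharp_{\Pi_1}(\eta_1)]$; the two contributions then cancel, so $H_{\Pi_1}^{\varphi}$ is symmetric. The step I would be most careful about — the main (mild) obstacle — is that this last morphism property is precisely where the Poisson hypothesis $[\Pi_1,\Pi_1]_S=0$ enters: for a general bivector $\sharp_{\Pi_1}$ need not intertwine the brackets, and the cancellation genuinely fails. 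Beyond that, the only thing to watch is the sign bookkeeping in the $J_1$- and anchor-identities, which is routine.
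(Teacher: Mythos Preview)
Your proof is correct. The paper does not provide its own proof of this proposition: it is stated as a preliminary result quoted from \cite{8yar}, so there is nothing to compare against. Your argument uses exactly the structural ingredients recorded in the paper's Section~2 (the anchor identity, the $J_1$-formula for $\Pi_1$, metric compatibility, torsion-freeness, and the Lie-algebra-morphism property of $\sharp_{\Pi_1}$), and the sign bookkeeping and the identification of where the Poisson hypothesis enters are both handled correctly.
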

\begin{proposition}\cite{8yar} \label{p2.3}
	Let $\omega_1,\gamma_1\in\Gamma(T^*M_{1})$, $\omega_2,\gamma_2\in\Gamma(T^*M_{2})$ and $\omega=\omega_1^h+\omega_2^v$,  $\gamma=\gamma_1^h+\gamma_2^v$ are one forms on product space $\left( M = M_{1} \times M_{2}, \Pi^{\nu_{1}}\right)$, then
	\begin{align*}
		&(1).\:\sharp_{\Pi^{\nu_1}}(\omega)=\big[\sharp_{\Pi_1}(\omega_1)\big]^h+{{\nu_{1}}}^h\big[\sharp_{\Pi_2}(\omega_2)\big]^v,\\
		&(2).\:\mathcal{L}_{\sharp_{\Pi^{\nu_1}}(\omega)}\gamma=\big(\mathcal{L}_{\sharp_{\Pi_1}(\omega_1)}\gamma_1\big)^h+\nu_{1}^h\big(\mathcal{L}_{\sharp_{\Pi_2}(\omega_2)}\gamma_2\big)^v+\Pi_2(\omega_2,\gamma_2)^v(d\nu_{1})^h,\\
		&(3).\:[\omega,\gamma]_{\Pi^{\nu_{1}}}=[\omega_1,\gamma_1]_{\Pi_1}^h+\nu_{1}^h[\omega_2,\gamma_2]_{\Pi_2}^v+\Pi_2(\omega_2,\gamma_2)^v(d\nu_{1})^h.
	\end{align*}
\end{proposition}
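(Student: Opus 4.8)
The plan is to prove the three identities in the order stated, since (3) will drop out of the Koszul‐type bracket formula on one‑forms once (2) is in hand, and (2) itself requires the sharp‑map expression from (1). Throughout I would lean on two elementary facts about the product splitting of $TM$ induced by $d\pi_1$ and $d\pi_2$: first, the bivector $\Pi_1$, viewed on $M$ through $\Lambda^2TM_1\hookrightarrow\Lambda^2TM$, only pairs with the horizontal components of one‑forms (and dually $\Pi_2$ only with the vertical ones), so that $\Pi^{\nu_1}(\omega,\gamma)=\Pi_1(\omega_1,\gamma_1)^h+\nu_1^h\,\Pi_2(\omega_2,\gamma_2)^v$ for $\omega=\omega_1^h+\omega_2^v$, $\gamma=\gamma_1^h+\gamma_2^v$; second, a horizontal lift annihilates every function pulled back from $M_2$ (and dually), and horizontal and vertical lifts of vector fields commute, $[X_1^h,Y_2^v]=0$.

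For (1) I would test the candidate $[\sharp_{\Pi_1}(\omega_1)]^h+\nu_1^h[\sharp_{\Pi_2}(\omega_2)]^v$ against the defining property $\eta\big(\sharp_{\Pi^{\nu_1}}(\omega)\big)=\Pi^{\nu_1}(\omega,\eta)$ for an arbitrary $\eta=\eta_1^h+\eta_2^v$. The mixed pairings $\eta_1^h\big([\sharp_{\Pi_2}(\omega_2)]^v\big)$ and $\eta_2^v\big([\sharp_{\Pi_1}(\omega_1)]^h\big)$ vanish by the splitting, leaving $\eta_1(\sharp_{\Pi_1}(\omega_1))^h+\nu_1^h\,\eta_2(\sharp_{\Pi_2}(\omega_2))^v=\Pi_1(\omega_1,\eta_1)^h+\nu_1^h\,\Pi_2(\omega_2,\eta_2)^v$, which is exactly $\Pi^{\nu_1}(\omega,\eta)$ by the first fact above. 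Since $\eta$ is arbitrary and the anchor map is uniquely determined by this pairing, (1) follows.

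For (2) I would substitute (1) and expand $\mathcal{L}_{\sharp_{\Pi^{\nu_1}}(\omega)}\gamma=\mathcal{L}_{[\sharp_{\Pi_1}(\omega_1)]^h}\gamma+\mathcal{L}_{\nu_1^h[\sharp_{\Pi_2}(\omega_2)]^v}\gamma$ with $\gamma=\gamma_1^h+\gamma_2^v$. Using $(\mathcal{L}_X\beta)(Y)=X(\beta(Y))-\beta([X,Y])$ together with the second fact shows that the Lie derivative of a horizontal one‑form along a vertical lift, and of a vertical one‑form along a horizontal lift, both vanish; hence the first term contributes only $(\mathcal{L}_{\sharp_{\Pi_1}(\omega_1)}\gamma_1)^h$. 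For the second term the crucial identity is $\mathcal{L}_{fX}\beta=f\,\mathcal{L}_X\beta+\beta(X)\,df$ applied with $f=\nu_1^h$ and $X=[\sharp_{\Pi_2}(\omega_2)]^v$: the $f\,\mathcal{L}_X\beta$ piece gives $\nu_1^h(\mathcal{L}_{\sharp_{\Pi_2}(\omega_2)}\gamma_2)^v$, while $\beta(X)\,df=\gamma_2^v\big([\sharp_{\Pi_2}(\omega_2)]^v\big)\,(d\nu_1)^h=\Pi_2(\omega_2,\gamma_2)^v(d\nu_1)^h$ by the anchor property on $M_2$. This $df$‑correction, forced by the nonconstant factor $\nu_1^h$, is the only delicate point in the whole proposition and is where I expect the main obstacle to lie; everything else is bookkeeping.

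For (3) I would insert (2), used for both $\mathcal{L}_{\sharp_{\Pi^{\nu_1}}(\omega)}\gamma$ and $\mathcal{L}_{\sharp_{\Pi^{\nu_1}}(\gamma)}\omega$, into the Koszul formula $[\omega,\gamma]_{\Pi^{\nu_1}}=\mathcal{L}_{\sharp_{\Pi^{\nu_1}}(\omega)}\gamma-\mathcal{L}_{\sharp_{\Pi^{\nu_1}}(\gamma)}\omega-d\big(\Pi^{\nu_1}(\omega,\gamma)\big)$, expanding the last term as $(d\Pi_1(\omega_1,\gamma_1))^h+\Pi_2(\omega_2,\gamma_2)^v(d\nu_1)^h+\nu_1^h(d\Pi_2(\omega_2,\gamma_2))^v$. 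The horizontal pieces reassemble into $[\omega_1,\gamma_1]_{\Pi_1}^h$ and the vertical pieces into $\nu_1^h[\omega_2,\gamma_2]_{\Pi_2}^v$; for the $(d\nu_1)^h$ terms one gets $\Pi_2(\omega_2,\gamma_2)^v$ from $\mathcal{L}_{\sharp_{\Pi^{\nu_1}}(\omega)}\gamma$, another $\Pi_2(\omega_2,\gamma_2)^v$ from $-\mathcal{L}_{\sharp_{\Pi^{\nu_1}}(\gamma)}\omega$ after using $\Pi_2(\gamma_2,\omega_2)=-\Pi_2(\omega_2,\gamma_2)$, and $-\Pi_2(\omega_2,\gamma_2)^v$ from $-d(\Pi^{\nu_1}(\omega,\gamma))$, which sum to the single correction $\Pi_2(\omega_2,\gamma_2)^v(d\nu_1)^h$. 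This last step is a routine sign‑chase once (2) is established, so the heart of the argument is the computation of the correction term in (2).
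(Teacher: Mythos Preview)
Your proof is correct. Note, however, that the paper does not supply its own proof of this proposition: it is quoted verbatim from \cite{8yar} and stated without argument, so there is nothing in the present paper to compare your approach against.

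That said, your argument is the natural one and is essentially what the original source does. The only point worth a small remark is the vanishing claim in step (2), namely that $\mathcal{L}_{X_1^h}\gamma_2^v=0$ and $\mathcal{L}_{Y_2^v}\gamma_1^h=0$. You justify this via the formula $(\mathcal{L}_X\beta)(Y)=X(\beta(Y))-\beta([X,Y])$ together with $[X_1^h,Y_2^v]=0$; this is fine, but it is worth making explicit that the term $X_1^h(\gamma_2^v(Y_2^v))=X_1^h\big(\gamma_2(Y_2)^v\big)$ vanishes because a horizontal lift kills any function pulled back from $M_2$, and symmetrically for the other case. You say this in words at the outset, so the logic is complete; it is just the one place a reader might pause. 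The identity $\mathcal{L}_{fX}\beta=f\,\mathcal{L}_X\beta+\beta(X)\,df$ and the ensuing sign-chase in (3) are handled correctly.
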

Next, the following two propositions are discussed for the contravariant warped product space $(M= M_{1}\times_{f_{1}} M_{2}, \Pi^{\nu_1},\tilde{g}^{f_{1}})$. 
	\begin{proposition}\cite{8bpk} \label{p2.4}
	Let $\omega_1,\gamma_1\in\Gamma(T^*M_{1})$ and $\omega_2,\gamma_2\in\Gamma(T^*M_{2})$, then
	\begin{align*}
		(1).\:\mathcal{D}_{\omega_1^h}\gamma_1^h&=(\mathcal{D}_{\omega_1}^{M_{1}}\gamma_1)^h,\\
		(2).\:\mathcal{D}_{\omega_2^v}\gamma_2^v&={\nu_1}^h(\mathcal{D}_{\omega_2}^{M_{2}}\gamma_2)^v+\frac{1}{2}\Pi_2(\omega_2,\gamma_2)^v(d{\nu_1})^h-\frac{1}{(f_{1}^h)^3}\tilde{g}_2(\omega_2,\gamma_2)^v(J_1df_{1})^h,\\
		(3).\:\mathcal{D}_{\omega_1^h}\gamma_2^v&=\mathcal{D}_{\gamma_2^v}\omega_1^h=\frac{1}{2f_{1}^h}\big[2\tilde{g}_1(J_1df_{1},\omega_1)^h\gamma_2^v-\{f_{1}^3\tilde{g}_1(d{\nu_1},\omega_1)\}^h(J_2\gamma_2)^v\big].
	\end{align*}
\end{proposition}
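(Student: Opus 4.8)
The natural tool is the contravariant Koszul formula for the Levi-Civita connection $\mathcal{D}$ of the pair $(\tilde{g}^{f_1},\Pi^{\nu_1})$. Since $\mathcal{D}$ is torsion-free and compatible with $\tilde{g}^{f_1}$ in the sense recalled in Section 2, for all $\omega,\eta,\sigma\in\Gamma(T^*M)$ one has
\begin{align*}
	2\tilde{g}^{f_1}(\mathcal{D}_\omega\eta,\sigma) &= \sharp_{\Pi^{\nu_1}}(\omega)\tilde{g}^{f_1}(\eta,\sigma)+\sharp_{\Pi^{\nu_1}}(\eta)\tilde{g}^{f_1}(\sigma,\omega)-\sharp_{\Pi^{\nu_1}}(\sigma)\tilde{g}^{f_1}(\omega,\eta)\\
	&\quad+\tilde{g}^{f_1}([\omega,\eta]_{\Pi^{\nu_1}},\sigma)-\tilde{g}^{f_1}([\eta,\sigma]_{\Pi^{\nu_1}},\omega)-\tilde{g}^{f_1}([\omega,\sigma]_{\Pi^{\nu_1}},\eta).
\end{align*}
Because $\tilde{g}^{f_1}$ is non-degenerate and, by \eqref{8.2.3}, block-diagonal with respect to the splitting of $\Gamma(T^*M)$ into horizontal and vertical one-forms, it is enough to evaluate the right-hand side when $\omega,\eta$ are the prescribed lifts and the test one-form $\sigma$ runs through horizontal lifts $\sigma_1^h$ and vertical lifts $\sigma_2^v$: the two scalars so obtained determine the horizontal and the vertical component of $\mathcal{D}_\omega\eta$.

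Three elementary facts are used repeatedly. First, Proposition \ref{p2.3} computes $\sharp_{\Pi^{\nu_1}}$ and $[\cdot,\cdot]_{\Pi^{\nu_1}}$ on lifts; in particular a horizontally lifted vector field differentiates only the horizontally lifted factor of a product of a horizontal and a vertical function (and symmetrically for vertical lifts). Second, $\Pi_i(\omega_i,\eta_i)=\tilde{g}_i(J_i\omega_i,\eta_i)$ turns the bivector contributions into $J_i$-terms. Third, $\sharp_{\Pi_1}(\omega_1)(f_1)=\Pi_1(\omega_1,df_1)=-\tilde{g}_1(J_1df_1,\omega_1)$ (and likewise for $\nu_1$) rewrites every derivative of the warping function as an algebraic expression in $J_1df_1$ and $d\nu_1$, which is the source of the $(J_1df_1)^h$ and $(d\nu_1)^h$ terms. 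For (1), with $\omega=\omega_1^h$, $\eta=\gamma_1^h$ we have $[\omega_1^h,\gamma_1^h]_{\Pi^{\nu_1}}=[\omega_1,\gamma_1]_{\Pi_1}^h$, all mixed cometric pairings vanish, the test against $\sigma_2^v$ gives $0$ (so the result is horizontal), and the test against $\sigma_1^h$ is exactly the Koszul identity defining $\mathcal{D}^{M_1}$; hence $\mathcal{D}_{\omega_1^h}\gamma_1^h=(\mathcal{D}^{M_1}_{\omega_1}\gamma_1)^h$.

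For (3) one first notes, from Proposition \ref{p2.3}(3), that $[\omega_1^h,\gamma_2^v]_{\Pi^{\nu_1}}=0$, whence torsion-freeness gives $\mathcal{D}_{\omega_1^h}\gamma_2^v=\mathcal{D}_{\gamma_2^v}\omega_1^h$. Testing the Koszul formula against $\sigma_1^h$ yields $0$ (the only potentially nonzero term, $\sharp_{\Pi^{\nu_1}}(\gamma_2^v)\tilde{g}^{f_1}(\omega_1^h,\sigma_1^h)$, vanishes because the vertical field $\sharp_{\Pi^{\nu_1}}(\gamma_2^v)$ kills the horizontal function $\tilde{g}_1(\omega_1,\sigma_1)^h$), so $\mathcal{D}_{\omega_1^h}\gamma_2^v$ is purely vertical; testing against $\sigma_2^v$ leaves precisely two contributions, one where $\sharp_{\Pi_1}(\omega_1)^h$ acts on $1/(f_1^h)^2$ and one from the $(d\nu_1)^h$-part of $[\gamma_2^v,\sigma_2^v]_{\Pi^{\nu_1}}$, and rearranging with the facts above produces the stated formula. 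For (2), with $\omega=\omega_2^v$, $\eta=\gamma_2^v$, the test against $\sigma_1^h$ recovers the horizontal component: the term $-\sharp_{\Pi^{\nu_1}}(\sigma_1^h)\tilde{g}^{f_1}(\omega_2^v,\gamma_2^v)$ produces the $\frac{1}{(f_1^h)^3}\tilde{g}_2(\omega_2,\gamma_2)^v(J_1df_1)^h$ term and the $(d\nu_1)^h$-part of $[\omega_2^v,\gamma_2^v]_{\Pi^{\nu_1}}$ produces the $\frac12\Pi_2(\omega_2,\gamma_2)^v(d\nu_1)^h$ term; the test against $\sigma_2^v$ recovers the vertical component, where every surviving term carries the common factor $\nu_1^h/(f_1^h)^2$ and the remaining bracket is exactly the Koszul combination for $\mathcal{D}^{M_2}$, giving $\nu_1^h(\mathcal{D}^{M_2}_{\omega_2}\gamma_2)^v$.

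The main obstacle is purely the bookkeeping in (2) and (3): one must correctly identify which of the six Koszul terms survives and with what sign, distribute $\sharp_{\Pi^{\nu_1}}(\cdot)$ over products of horizontally and vertically lifted functions without error, and then repackage the resulting derivatives of $f_1$ and $\nu_1$ so that the numerical coefficients ($\frac12$, $1/(f_1^h)^3$, and so on) come out exactly as claimed. No ingredient beyond the contravariant Koszul formula, the block structure \eqref{8.2.3}, and Proposition \ref{p2.3} is needed.
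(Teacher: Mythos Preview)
Your approach is correct and is the standard one: the contravariant Koszul formula together with the block structure \eqref{8.2.3} of $\tilde{g}^{f_1}$ and the lifting identities of Proposition~\ref{p2.3} determine $\mathcal{D}$ on lifts, and your identification of the surviving terms and their repackaging via $\sharp_{\Pi_1}(\omega_1)(f_1)=-\tilde{g}_1(J_1df_1,\omega_1)$ and $\Pi_2(\gamma_2,\sigma_2)=\tilde{g}_2(J_2\gamma_2,\sigma_2)$ is accurate. Note, however, that the present paper does not prove this proposition at all: it is quoted verbatim from \cite{8bpk}, so there is no in-paper argument to compare against. Your write-up is precisely the computation one expects the cited source to carry out, and no additional ingredient is required.
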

\begin{proposition}\cite{8bpk} \label{p8.2.5}
Consider the one forms	 $\omega_1,\eta_1,\gamma_1\in\Gamma(T^*M_{1})$, $\omega_2,\eta_2,\gamma_2\in\Gamma(T^*M_{2})$ and $\gamma=\gamma_1^{h}+\gamma_2^{v}$, then
	\begin{align*}
	(1).\:\mathcal{R}(&\omega_1^{h},\eta_1^{h})\gamma=\big[\mathcal{R}_1(\omega_1,\eta_1)\gamma_1\big]^h\\
	&+\frac{1}{f_{1}^h}\big[\tilde{g}_1(\mathcal{D}_{\omega_1}^{M_{1}}J_1df_{1},\eta_1)-\tilde{g}_1(\mathcal{D}_{\eta_1}^{M_{1}}J_1df_{1},\omega_1)\big]^h\gamma_2^v\\
	&+\frac{1}{(f_{1}^h)^2}\big[\mathcal{D}_{\eta_1}^{M_{1}}(f_{1})\tilde{g}_1(J_1df_{1},\omega_1)-\mathcal{D}_{\omega_1}^{M_{1}}(f_{1})\tilde{g}_1(J_1df_{1},\eta_1)\big]^h\gamma_2^v\\
	&+\frac{(f_{1}^h)^2}{2}\big[\tilde{g}_1(\mathcal{D}_{\eta_1}^{M_{1}}d{\nu_{1}},\omega_1)-\tilde{g}_1(\mathcal{D}_{\omega_1}^{M_{1}}d{\nu_{1}},\eta_1)\big]^h(J_2\gamma_2)^v\\
	&+f_{1}^h\big[\mathcal{D}_{\eta_1}^{M_{1}}(f_{1})\tilde{g}_1(d{\nu_{1}},\omega_1)-\mathcal{D}_{\omega_1}^{M_{1}}(f_{1})\tilde{g}_1(d{\nu_{1}},\eta_1)\big]^h(J_2\gamma_2)^v,
	\end{align*}
	\begin{align*}
	(2).\:\mathcal{R}(&\omega_1^{h},\eta_2^{v})\gamma_1^h=\frac{1}{(f_{1}^h)^{2}}\big[\tilde{g}_1(J_1df_{1},\omega_1)\tilde{g}_1(J_1df_{1},\gamma_1)\big]^h\eta_2^{v}+\tilde{g}_1(\mathcal{D}_{\omega_1}^{M_{1}}(\frac{J_1df_{1}}{f_{1}}),\gamma_1)^h\eta_2^{v}\\
	&-\frac{f_{1}^h}{2}\big[\tilde{g}_1(d{\nu_{1}},\omega_1)\tilde{g}_1(J_1df_{1},\gamma_1)+\tilde{g}_1(J_1df_{1},\omega_1)\tilde{g}_1(d{\nu_{1}},\gamma_1)\big]^h(J_2\eta_2)^v\\
	&-\tilde{g}_1(\mathcal{D}_{\omega_1}^{M_{1}}\frac{f_{1}^2d{\nu_{1}}}{2},\gamma_1)^h(J_2\eta_2)^v+\frac{(f_{1}^h)^4}{4}\big[\tilde{g}_1(d{\nu_{1}},\omega_1)\tilde{g}_1(d{\nu_{1}},\gamma_1)\big]^h(J_2^2\eta_2)^v,
	\end{align*}
	\begin{align*}
	(3).\:\mathcal{R}(&\omega_1^{h},\eta_2^{v})\gamma_2^v=-\frac{1}{(f_{1}^h)^3}\tilde{g}_2(\eta_2,\gamma_2)^v(\mathcal{D}_{\omega_1}^{M_{1}}J_1df_{1})^h-\Pi_1(d{\nu_{1}},\omega_1)^h(\mathcal{D}_{\eta_2}^{M_{2}}\gamma_2)^v\\
	&-\frac{1}{2(f_{1}^h)^4}\big[(f_{1}^3\tilde{g}_1(d{\nu_{1}},\omega_1))^h\tilde{g}_2(J_2\gamma_2,\eta_2)^v+4\tilde{g}_1(J_1df_{1},\omega_1)^h\tilde{g}_2(\eta_2,\gamma_2)^v\big](J_1df_{1})^h\\
	&+\frac{1}{2}\big[({\nu_{1}} f_{1}^2\tilde{g}_1(d{\nu_{1}},\omega_1))^h\{\mathcal{D}_{\eta_2}^{M_{2}}J_2\gamma_2-J_2\mathcal{D}_{\eta_2}^{M_{2}}\gamma_2\}^v+\Pi_2(\eta_2,\gamma_2)^v(\mathcal{D}_{\omega_1}^{M_{1}}d{\nu_{1}})^h\big]\\
	&+\frac{1}{4f_{1}^h}\big[(f_{1}^3\tilde{g}_1(d{\nu_{1}},\omega_1))^h\Pi_2(\eta_2,J_2\gamma_2)^v-2\tilde{g}_1(J_1df_{1},\omega_1)^h\Pi_2(\eta_2,\gamma_2)^v\big](d{\nu_{1}})^h,
	\end{align*}
	\begin{align*}
	(4).\:\mathcal{R}(&\omega_2^{v},\eta_2^{v})\gamma_1^h=\frac{1}{f_{1}^h}\Pi_2(\omega_2,\eta_2)^v\big[\tilde{g}_1(J_1df_{1},\gamma_1)d{\nu_{1}}-\tilde{g}_1(d{\nu_{1}},\gamma_1)(J_1df_{1})-f_{1}\mathcal{D}_{d{\nu_{1}}}^{M_{1}}\gamma_1\big]^h\\
	&+\Big(\frac{f_{1}^2{\nu_{1}} \tilde{g}_1(d{\nu_{1}},\gamma_1)}{2}\Big)^h\big[(\mathcal{D}_{\eta_2}^{M_{2}}(J_2\omega_2)-(\mathcal{D}_{\omega_2}^{M_{2}}(J_2\eta_2)+J_2[\omega_2,\eta_2]_{\Pi_{2}}\big]^v,
\end{align*}
\begin{align*}
	(5).\:\mathcal{R}(\omega_2^{v},\eta_2^{v}&)\gamma_2^v=({\nu_{1}}^h)^2\big[\mathcal{R}_2(\omega_2,\eta_2)\gamma_2\big]^v\\
	&+\frac{{\nu_{1}}^h}{2}\big[(\mathcal{D}_{\omega_2}^{M_{2}}\Pi_{2})(\eta_2,\gamma_2)-(\mathcal{D}_{\eta_2}^{M_{2}}\Pi_{2})(\omega_2,\gamma_2)\big]^h(d{\nu_{1}})^h\\
	&+\Big(\frac{f_{1}^2||d{\nu_{1}}||_1^{2}}{4}\Big)^h\big[J_2\{\Pi_2(\omega_2,\gamma_2)\eta_2-\Pi_2(\eta_2,\gamma_2)\omega_2+2\Pi_2(\omega_2,\eta_2)\gamma_2\}\big]^v\\
	&+\Big(\frac{||J_1df_{1}||_1^{2}}{f_{1}^4}\Big)^h\big[\tilde{g}_2(\omega_2,\gamma_2)\eta_2-\tilde{g}_2(\eta_2,\gamma_2)\omega_2\big]^v\\
	&+\Big(\frac{\tilde{g}_1(d{\nu_{1}},J_1df_{1})}{2f_{1}}\Big)^h\big[\Pi_2(\eta_2,\gamma_2)\omega_2-\Pi_2(\omega_2,\gamma_2)\eta_2-2\Pi_2(\omega_2,\eta_2)\gamma_2\big]^v\\
	&+\Big(\frac{\tilde{g}_1(d{\nu_{1}},J_1df_{1})}{2f_{1}}\Big)^h\big[J_2\{\tilde{g}_2(\eta_2,\gamma_2)\omega_2-\tilde{g}_2(\omega_2,\gamma_2)\eta_2\}\big]^v.
\end{align*}
\end{proposition}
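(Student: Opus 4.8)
The plan is to compute each of the five formulas directly from the definition of the contravariant curvature operator
\[
\mathcal{R}(\omega,\eta)\gamma=\mathcal{D}_{\omega}\mathcal{D}_{\eta}\gamma-\mathcal{D}_{\eta}\mathcal{D}_{\omega}\gamma-\mathcal{D}_{[\omega,\eta]_{\Pi^{\nu_1}}}\gamma,
\]
feeding in the Levi-Civita connection identities of Proposition \ref{p2.4} and the bracket identity of Proposition \ref{p2.3}(3). Since $\mathcal{R}$ is $C^{\infty}(M)$-linear in all three slots, it suffices to evaluate it on horizontal and vertical lifts, which is exactly what the five cases do. The one structural point to keep in mind is that after a single application of Proposition \ref{p2.4} the coefficients that appear are smooth functions on $M$ — lifts of $f_{1}$, $\nu_{1}$, and of scalars such as $\tilde g_{1}(J_{1}df_{1},\omega_{1})$ — so the second covariant derivative splits by the Leibniz rule into a ``coefficient differentiated'' part and a ``lift differentiated'' part. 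The directional derivatives of these scalars are then rewritten in covariant form using metric compatibility, $\sharp_{\Pi_{1}}(\omega_{1})\big(\tilde g_{1}(\eta_{1},\gamma_{1})\big)=\tilde g_{1}(\mathcal{D}^{M_{1}}_{\omega_{1}}\eta_{1},\gamma_{1})+\tilde g_{1}(\eta_{1},\mathcal{D}^{M_{1}}_{\omega_{1}}\gamma_{1})$, together with $\sharp_{\Pi_{1}}(\omega_{1})(f_{1})=\mathcal{D}^{M_{1}}_{\omega_{1}}(f_{1})$ and the contravariant Hessian relation recalled above, $H^{f_{1}}_{\Pi_{1}}(\omega_{1},\eta_{1})=-\tilde g_{1}(\mathcal{D}^{M_{1}}_{\omega_{1}}J_{1}df_{1},\eta_{1})$, which is moreover symmetric since $\Pi_{1}$ is a Poisson tensor.

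Case (1) serves as the model. I would split $\gamma=\gamma_{1}^{h}+\gamma_{2}^{v}$ and treat the two pieces separately. On $\gamma_{1}^{h}$ all three terms collapse by Proposition \ref{p2.4}(1), while Proposition \ref{p2.3}(3) gives $[\omega_{1}^{h},\eta_{1}^{h}]_{\Pi^{\nu_{1}}}=[\omega_{1},\eta_{1}]_{\Pi_{1}}^{h}$ (the $\nu_{1}$- and $\Pi_{2}$-corrections vanish because $\omega_{2}=\eta_{2}=0$), so this part is exactly $[\mathcal{R}_{1}(\omega_{1},\eta_{1})\gamma_{1}]^{h}$. On $\gamma_{2}^{v}$ I would apply Proposition \ref{p2.4}(3) twice: the inner step produces a $\gamma_{2}^{v}$-valued term with coefficient $\tilde g_{1}(J_{1}df_{1},\eta_{1})^{h}/f_{1}^{h}$ and a $(J_{2}\gamma_{2})^{v}$-valued term with coefficient $\tfrac12 (f_{1}^{2}\tilde g_{1}(d\nu_{1},\eta_{1}))^{h}$; the outer $\mathcal{D}_{\omega_{1}^{h}}$ then hits both the vertical lift (again via Proposition \ref{p2.4}(3)) and these coefficients. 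The cross-terms involving $\mathcal{D}^{M_{1}}_{\omega_{1}}\eta_{1}$ should combine with the bracket term $\mathcal{D}_{[\omega_{1},\eta_{1}]_{\Pi_{1}}^{h}}\gamma_{2}^{v}$ using torsion-freeness, and collecting what survives by lift-type ($\gamma_{2}^{v}$ versus $(J_{2}\gamma_{2})^{v}$) should reproduce the four correction lines of (1).

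Cases (2)–(5) are the same bookkeeping with more terms. For (2) and (3) the bracket term drops, since $[\omega_{1}^{h},\eta_{2}^{v}]_{\Pi^{\nu_{1}}}=\Pi_{2}(0,\eta_{2})^{v}(d\nu_{1})^{h}=0$, and after expanding $\mathcal{D}_{\omega_{1}^{h}}\mathcal{D}_{\eta_{2}^{v}}\gamma$ and $\mathcal{D}_{\eta_{2}^{v}}\mathcal{D}_{\omega_{1}^{h}}\gamma$ via Proposition \ref{p2.4}(2),(3) one uses the Hessian relation and its symmetry to recognise the clusters $\mathcal{D}^{M_{1}}_{\omega_{1}}(J_{1}df_{1}/f_{1})$ and $\mathcal{D}^{M_{1}}_{\omega_{1}}(f_{1}^{2}d\nu_{1}/2)$. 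For (4) and (5) the bracket term is nonzero, $[\omega_{2}^{v},\eta_{2}^{v}]_{\Pi^{\nu_{1}}}=\nu_{1}^{h}[\omega_{2},\eta_{2}]_{\Pi_{2}}^{v}+\Pi_{2}(\omega_{2},\eta_{2})^{v}(d\nu_{1})^{h}$, and here one additionally needs the defining relation $\Pi_{2}(\omega_{2},\eta_{2})=\tilde g_{2}(J_{2}\omega_{2},\eta_{2})$, metric compatibility of $\mathcal{D}^{M_{2}}$, and the Jacobi identity for $[\cdot,\cdot]_{\Pi_{2}}$ to combine the $J_{2}$-terms; the expression $\big[\mathcal{D}^{M_{2}}_{\omega_{2}}(J_{2}\eta_{2})-\mathcal{D}^{M_{2}}_{\eta_{2}}(J_{2}\omega_{2})+J_{2}[\omega_{2},\eta_{2}]_{\Pi_{2}}\big]^{v}$ appearing in (4) is precisely the covariant non-metricity $(\mathcal{D}^{M_{2}}_{\cdot}\Pi_{2})$ written invariantly, and it also accounts for the $(\mathcal{D}^{M_{2}}_{\omega_{2}}\Pi_{2})(\eta_{2},\gamma_{2})$ terms in (5).

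I expect the main obstacle to be organizational rather than conceptual: after two rounds of the Leibniz rule there are on the order of a dozen terms per identity, and several that look different coincide only after invoking metric compatibility, the Hessian symmetry, or the antisymmetry $\Pi_{1}(\omega_{1},\eta_{1})=-\tilde g_{1}(\omega_{1},J_{1}\eta_{1})$, so it is easy to drop a factor of $f_{1}^{h}$ or flip a sign. A clean way to control this is to first isolate the three ``building blocks'' $\mathcal{D}_{\omega_{1}^{h}}(\phi^{h}\gamma_{2}^{v})$, $\mathcal{D}_{\omega_{1}^{h}}(\phi^{h}(J_{2}\gamma_{2})^{v})$ and $\mathcal{D}_{\omega_{2}^{v}}(\phi^{h}\gamma_{2}^{v})$ for a generic scalar $\phi$ on $M_{1}$ as short lemmas — this packages all the scalar-derivative identities in one place — and then assemble (1)–(5) from them, leaving only the final ``collect by lift-type'' step.
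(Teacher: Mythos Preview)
Your approach is correct, and there is nothing in the paper to compare it against: Proposition~\ref{p8.2.5} is quoted from \cite{8bpk} and the present paper gives no proof of it at all. The direct computation you outline---expanding $\mathcal{R}(\omega,\eta)\gamma=\mathcal{D}_{\omega}\mathcal{D}_{\eta}\gamma-\mathcal{D}_{\eta}\mathcal{D}_{\omega}\gamma-\mathcal{D}_{[\omega,\eta]_{\Pi^{\nu_1}}}\gamma$ case by case using Propositions~\ref{p2.3} and~\ref{p2.4}, the Leibniz rule on the scalar coefficients, metric compatibility, and the Hessian symmetry---is exactly the standard route, and your identification of which bracket terms survive in each case is accurate.
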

\section{\textbf{ Sectional curvature}}
In this section, we find the sectional curvature of contravariant warped product space $(M= M_{1}\times_{f_{1}} M_{2}, \Pi^{\nu_1},\tilde{g}^{f_{1}})$. We also established the relation between  sectional curvatures of  $M_{1}, M_{2}$ with the sectional curvature of $M$ by taking $f_{1}$ as a Casimir function and $\nu_{1} = constant$.

\begin{lemma}\label{lem3.1}
	Let $(M= M_{1}\times_{f_{1}} M_{2}, \Pi^{\nu_1},\tilde{g}^{f_{1}})$ be a contravariant  warped product space. Then for non degenerate independent one forms   $\omega_1,\eta_1\in\Gamma(T^*M_{1})$ and $\omega_2,\eta_2\in\Gamma(T^*M_{2})$, we have 
\begin{align*}
	&\mathcal{K}^M\left(\omega_{1}^h,\eta_{1}^h \right)  =  \mathcal{K}^{M_{1}}\left( \omega_{1},\eta_{1}\right) ^h, 
	\end{align*}
\begin{align*}	
 \mathcal{K}^M\left(\omega_{1}^h,\eta_{2}^v \right)& = - \left(\frac{\tilde{g}_{1}\left( {\mathcal{D}^{M_{1}}_{\omega_{1}}J_{1} df_{1}}, \omega_{1} \right) }{f_{1} \tilde{g}_{1}\left(\omega_{1}, \omega_{1} \right) } \right)^h  -  \frac{2 \left( |\tilde{g}_{1}\left(J_{1}df_{1}, \omega_{1} \right)^h |^2\right) }{(f_{1 }^h)^2\tilde{g}_{1}\left(\omega_{1}, \omega_{1} \right)}\\
&+ \frac{\tilde{g}_{2}\left( J_{2}\eta_{2}, J_{2}\eta_{2}\right)^v |\left( f_{1}^2 \tilde{g}_{1}\left( d\nu_{1}, \omega_{1}\right) \right)^h|^2 }{4\tilde{g}_{1}\left(\omega_{1}, \omega_{1} \right)^h \tilde{g}_{2}\left(\eta_{2}, \eta_{2} \right)^v },
\end{align*}
\begin{align*}	
 \mathcal{K}^M\left(\omega_{2}^v,\eta_{2}^v \right)& = (\nu_{1}^h)^2 (f_{1}^h)^2  \mathcal{K}^{M_{2}}\left(\omega_{2},\eta_{2}\right)^v - \frac{\left(\||J_{1}df_{1}||_{1}^2 \right)^h}{(f_{1}^h)^2} \\
 &- \frac{\left( 3f_{1}^4 ||d\nu_{1}||_{1}^2\right)^h |g_{2}\left( J_{2} \omega_{2} , \eta_{2}\right)^v|^2 + 4 \left( f_{1} \tilde{g}_{1}\left(d\nu_{1}, J_{1}df \right) \right)^h \tilde{g}_{2}\left(\omega_{2} , \eta_{2}\right)^v \tilde{g}_{2}\left(J_{2} \omega_{2} , \eta_{2}\right)^v}{4\left( \tilde{g}_{2}\left( \omega_{2}, \omega_{2} \right)^v \tilde{g}_{2}\left( \eta_{2}, \eta_{2} \right)^v - |\tilde{g}_{2}\left( \omega_{2}, \eta_{2} \right)^v|^2 \right) },
\end{align*}

where $\mathcal{K}^M$, $\mathcal{K}^{M_{1}}$ and $\mathcal{K}^{M_{2}}$ are sectional curvatures of $M$, $M_{1}$ and $M_{2}$ respectively.
\end{lemma}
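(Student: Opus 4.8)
The plan is to compute each sectional curvature straight from the definition (\ref{8.2.2}) on $M$, substituting the explicit curvature tensors of Proposition \ref{p8.2.5} and simplifying numerator and denominator through the cometric relations (\ref{8.2.3}). Throughout I will use the antisymmetry $\Pi_2(\omega_2,\omega_2)=\Pi_2(\eta_2,\eta_2)=0$ and the compatibility identity $\Pi_2(\omega_2,\eta_2)=\tilde{g}_2(J_2\omega_2,\eta_2)=-\tilde{g}_2(\omega_2,J_2\eta_2)$ recorded in Section 2; these are what collapse the long expressions of Proposition \ref{p8.2.5} onto the stated formulas. No specialization of $f_1$ or $\nu_1$ is made, so every term carrying $J_1df_1$ or $d\nu_1$ is retained. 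For the first identity I set $\gamma=\eta_1^h$ in Proposition \ref{p8.2.5}(1); then $\gamma_2=0$ kills all but the leading summand, giving $\mathcal{R}(\omega_1^h,\eta_1^h)\eta_1^h=[\mathcal{R}_1(\omega_1,\eta_1)\eta_1]^h$. Pairing with $\omega_1^h$ via (\ref{8.2.3}) makes the numerator $\tilde{g}_1(\mathcal{R}_1(\omega_1,\eta_1)\eta_1,\omega_1)^h$, while the denominator is the horizontal lift of the $M_1$-denominator, so the quotient is $\mathcal{K}^{M_1}(\omega_1,\eta_1)^h$.

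For the mixed case I put $\gamma_2=\eta_2$ in Proposition \ref{p8.2.5}(3) and pair with $\omega_1^h$. Since (\ref{8.2.3}) annihilates horizontal--vertical products, only the horizontally directed summands survive; among those, the one proportional to $(\mathcal{D}^{M_1}_{\omega_1}d\nu_1)^h$ drops because $\Pi_2(\eta_2,\eta_2)=0$, leaving exactly the three pieces directed along $(\mathcal{D}^{M_1}_{\omega_1}J_1df_1)^h$, $(J_1df_1)^h$ and $(d\nu_1)^h$. The identity $\tilde{g}_2(J_2\eta_2,\eta_2)=\Pi_2(\eta_2,\eta_2)=0$ removes half of the $(J_1df_1)^h$-piece, while $\Pi_2(\eta_2,J_2\eta_2)=\tilde{g}_2(J_2\eta_2,J_2\eta_2)$ turns the $(d\nu_1)^h$-piece into the stated $\tilde{g}_2(J_2\eta_2,J_2\eta_2)^v$ numerator. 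Dividing by the denominator $\frac{1}{(f_1^h)^2}\tilde{g}_1(\omega_1,\omega_1)^h\tilde{g}_2(\eta_2,\eta_2)^v$ and tracking the powers of $f_1^h$ reproduces the three displayed terms.

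For the purely vertical case I set $\gamma_2=\eta_2$ in Proposition \ref{p8.2.5}(5) and pair with $\omega_2^v$. The single summand carrying $(d\nu_1)^h$ is horizontal and drops; all the rest are vertical. The term $(\nu_1^h)^2[\mathcal{R}_2(\omega_2,\eta_2)\eta_2]^v$, divided by the vertical denominator $\frac{1}{(f_1^h)^4}D_2$ with $D_2=\tilde{g}_2(\omega_2,\omega_2)^v\tilde{g}_2(\eta_2,\eta_2)^v-|\tilde{g}_2(\omega_2,\eta_2)^v|^2$, gives $(\nu_1^h)^2(f_1^h)^2\mathcal{K}^{M_2}(\omega_2,\eta_2)^v$; the summand carrying $\|J_1df_1\|_1^2$ reorganizes through $\tilde{g}_2(\omega_2,\eta_2)^2-\tilde{g}_2(\omega_2,\omega_2)\tilde{g}_2(\eta_2,\eta_2)=-D_2$ into $-(\|J_1df_1\|_1^2)^h/(f_1^h)^2$. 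After applying $\Pi_2(\omega_2,\omega_2)=\Pi_2(\eta_2,\eta_2)=0$ and the $\Pi_2$--$J_2$ relation, the three remaining summands collapse onto the two scalars $|\tilde{g}_2(J_2\omega_2,\eta_2)^v|^2$ and $\tilde{g}_2(\omega_2,\eta_2)^v\tilde{g}_2(J_2\omega_2,\eta_2)^v$, which assemble into the last displayed fraction.

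I expect the main obstacle to be this vertical case, where several antisymmetrized Poisson summands must be recombined. After substituting $\gamma_2=\eta_2$, each bracket collapses via $\Pi_2(\eta_2,\eta_2)=0$ to a single multiple of the relevant vector; in particular the two summands proportional to $\tilde{g}_1(d\nu_1,J_1df_1)^h$ contribute the coefficients $-\tfrac{3}{2}$ and $+\tfrac{1}{2}$ of the common scalar $\tilde{g}_1(d\nu_1,J_1df_1)^h\,\tilde{g}_2(\omega_2,\eta_2)^v\,\tilde{g}_2(J_2\omega_2,\eta_2)^v$, whose sum supplies the cross term in the final fraction. Equally delicate is bookkeeping the powers of $f_1^h$ arising simultaneously from the curvature coefficients and from the vertical cometric $\frac{1}{(f_1^h)^2}\tilde{g}_2$; but beyond (\ref{8.2.2}), (\ref{8.2.3}) and Proposition \ref{p8.2.5} no new idea is required.
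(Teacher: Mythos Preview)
Your proposal is correct and follows essentially the same route as the paper: apply the definition (\ref{8.2.2}), substitute the curvature formulas from Proposition \ref{p8.2.5}, and simplify numerator and denominator through the cometric relations (\ref{8.2.3}) together with the antisymmetry of $\Pi_2$. In fact you supply more detail on the vertical case than the paper itself, which merely states that the third formula is obtained ``similarly'' from Proposition \ref{p8.2.5}(5).
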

\begin{proof}
	Let $(M= M_{1}\times_{f_{1}} M_{2}, \Pi^{\nu_1},\tilde{g}^{f_{1}})$ be a contravariant warped product space with non degenerate independent one forms   $\omega_1,\eta_1\in\Gamma(T^*M_{1})$ and $\omega_2,\eta_2\in\Gamma(T^*M_{2})$ . Then sectional curvature of the plane spanned by $\lbrace \omega_{1}^h, \eta_{1}^h \rbrace$ in $\Gamma\left( T^* M\right)$
	$$\mathcal{K}^M(\omega_{1}^h,\eta_{1}^h)=\frac{\tilde{g}^{f_{1}}(\mathcal{R}(\omega_{1}^h,\eta_{1}^h)\eta_{1}^h,\omega_{1}^h)}{\tilde{g}^{f_{1}}(\omega_{1}^h,\omega_{1}^h)\tilde{g}^{f_{1}}(\eta_{1}^h,\eta_{1}^h)-\tilde{g}^{f_{1}}(\omega_{1}^h,\eta_{1}^h)^2}.$$ Since, $$\tilde{g}^{f_{1}}(\mathcal{R}(\omega_{1}^h,\eta_{1}^h)\eta_{1}^h,\omega_{1}^h) = \tilde{g}^{f_{1}} \left( \big[\mathcal{R}_1(\omega_1,\eta_1),\eta_{1}, \big]^h,\omega_{1}^h \right) = g_{1}\left( \mathcal{R}_1(\omega_1,\eta_1),\eta_{1}, \omega_{1}\right)^h $$
	and,
$${\tilde{g}^{f_{1}}(\omega_{1}^h,\omega_{1}^h)\tilde{g}^{f_{1}}(\eta_{1}^h,\eta_{1}^h)-\tilde{g}^{f_{1}}(\omega_{1}^h,\eta_{1}^h)^2} = \tilde{g}_1(\omega_{1},\omega_{1})^h\tilde{g}_{1}(\eta_{1},\eta_{1})^h-|\tilde{g}_{1} (\omega_{1},\eta_{1})^h| ^2.$$
 Hence, $\mathcal{K}^M\left(\omega_{1}^h,\eta_{1}^h \right)  =  \mathcal{K}^{M_{1}}\left( \omega_{1},\eta_{1}\right) ^h,$	where 
$$ \mathcal{K}^{M_{1}}\left( \omega_{1},\eta_{1}\right) ^h =  \left( \frac{\tilde{g}_{1}(\mathcal{R}_{1}(\omega_{1},\eta_{1})\eta_{1},\omega_{1})}{\tilde{g}_{1}(\omega_{1},\omega_{1})\tilde{g}_{1}(\eta_{1},\eta_{1})-|\tilde{g}_{1}(\omega_{1},\eta_{1})|^2}\right)^h.$$
 Now, for a plane spanned by $\lbrace \omega_{1}^h, \eta_{2}^v \rbrace$ in $\Gamma\left( T^* M\right)$, the sectional curvature
 \begin{equation}\label{8.3.1}
 	\mathcal{K}^M(\omega_{1}^h,\eta_{2}^v)=\frac{\tilde{g}^{f_{1}}(\mathcal{R}(\omega_{1}^h,\eta_{2}^v)\eta_{2}^v,\omega_{1}^h)}{\tilde{g}^{f_{1}}(\omega_{1}^h,\omega_{1}^h)\tilde{g}^{f_{1}}(\eta_{2}^v,\eta_{2}^v)}.
 \end{equation}
  
 Using third part of Proposition \ref*{p8.2.5}, we have
 \begin{align}\label{8.3.2}
 	\nonumber	\tilde{g}^{f_{1}}(\mathcal{R}(\omega_{1}^h,\eta_{2}^v)\eta_{2}^v,\omega_{1}^h) &= -\left(\frac{1}{f_{1}^h} \right)^3 \tilde{g}_{2} \left( \eta_{2}, \eta_{2}\right)^v \tilde{g}_{1} \left(\mathcal{D}_{\omega_{1}}^{M_{1}} J_{1}df_{1}, \omega_{1} \right)^h, \\
  \nonumber	&- \frac{2}{(f_{1}^h)^4} |\tilde{g}_{1}\left(J_{1}df_{1}, \omega_{1} \right)^2|^2 \tilde{g}_{2} \left(\eta_{2}, \eta_{2} \right)^v,\\
	 &+ \left( \frac{f_{1}^h}{2}  \right)^2 |\tilde{g}_{1}\left(d\nu_{1}, \omega_{1} \right)^h|^2 \tilde{g}_{2}\left( J_{2}\eta_{2}, J_{2}\eta_{2} \right).  
 \end{align}
Also, 
\begin{align}\label{8.3.3}
		\tilde{g}^{f_{1}}(\omega_{1}^h,\omega_{1}^h)\tilde{g}^{f_{1}}(\eta_{2}^v,\eta_{2}^v)&=
	\left( \frac{1}{{f_{1}}^h}\right)^2 \tilde{g}_{1}(\omega_{1},\omega_{1})^h\tilde{g}_{2}(\eta_{2},\eta_{2})^v. 
\end{align}
Hence from equations  (\ref{8.3.1}), (\ref{8.3.2}) and (\ref{8.3.3}), we get the second part of our lemma. Similarly, third part of the lemma can be proved by  using last expression of Proposition \ref{p8.2.5}, and the relation 
\begin{equation*}
	\mathcal{K}^M(\omega_{2}^v,\eta_{2}^v)=\frac{\tilde{g}^{f_{1}}(\mathcal{R}(\omega_{2}^v,\eta_{2}^v)\eta_{2}^v,\omega_{2}^v)}{\tilde{g}^{f_{1}}(\omega_{2}^v,\omega_{2}^v)\tilde{g}^{f_{1}}(\eta_{2}^v,\eta_{2}^v)	-|\tilde{g}^{f_{1}}(\eta_{2}^v,\omega_{2}^v)|^2}.
\end{equation*}  
\end{proof}
 \begin{theorem}
 	Let $f_{1}$ be a Casimir function and $(M= M_{1}\times_{f_{1}} M_{2}, \Pi^{\nu_1},\tilde{g}^{f_{1}})$ be a Riemannian Poisson warped product space. Then positive sectional curvature of $M$ implies that $M_{1}$ and $M_{2}$ are  Poisson  manifolds of positive sectional curvatures.
 \end{theorem}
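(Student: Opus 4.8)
The plan is to specialize the three identities of Lemma \ref{lem3.1} to the present hypotheses and then read off signs. Since $f_{1}$ is a Casimir function we have $J_{1}df_{1}=0$ (recalled in Section~2), so every summand in Lemma \ref{lem3.1} containing $J_{1}df_{1}$, $\mathcal{D}^{M_{1}}_{\omega_{1}}J_{1}df_{1}$, $\|J_{1}df_{1}\|_{1}^{2}$ or $\tilde{g}_{1}(d\nu_{1},J_{1}df_{1})$ vanishes. This is the only use of the Casimir hypothesis, and it is precisely what removes the sign-indefinite term $(f_{1}\tilde{g}_{1}(d\nu_{1},J_{1}df_{1}))^{h}\tilde{g}_{2}(\omega_{2},\eta_{2})^{v}\tilde{g}_{2}(J_{2}\omega_{2},\eta_{2})^{v}$ from the third identity.

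For $M_{1}$: the first identity of Lemma \ref{lem3.1} gives $\mathcal{K}^{M}(\omega_{1}^{h},\eta_{1}^{h})=\mathcal{K}^{M_{1}}(\omega_{1},\eta_{1})^{h}$ for every independent pair $\omega_{1},\eta_{1}\in\Gamma(T^{*}M_{1})$. As $M$ has positive sectional curvature, the left-hand side is $>0$; evaluating at any $(p_{1},p_{2})$ and letting $\mathrm{span}\{\omega_{1},\eta_{1}\}\subset T^{*}_{p_{1}}M_{1}$ range over all $2$-planes shows $\mathcal{K}^{M_{1}}>0$, so $(M_{1},\Pi_{1})$ is a Poisson manifold of positive sectional curvature.

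For $M_{2}$: after the Casimir simplification the third identity reads
\[
\mathcal{K}^{M}(\omega_{2}^{v},\eta_{2}^{v})=(\nu_{1}^{h})^{2}(f_{1}^{h})^{2}\,\mathcal{K}^{M_{2}}(\omega_{2},\eta_{2})^{v}-\frac{\big(3f_{1}^{4}\|d\nu_{1}\|_{1}^{2}\big)^{h}\,|\tilde{g}_{2}(J_{2}\omega_{2},\eta_{2})^{v}|^{2}}{4\big(\tilde{g}_{2}(\omega_{2},\omega_{2})^{v}\tilde{g}_{2}(\eta_{2},\eta_{2})^{v}-|\tilde{g}_{2}(\omega_{2},\eta_{2})^{v}|^{2}\big)}.
\]
Since everything is Riemannian, the Cauchy--Schwarz inequality is strict for independent $\omega_{2},\eta_{2}$, so the last denominator is $>0$ while its numerator is $\geq 0$; hence $(\nu_{1}^{h})^{2}(f_{1}^{h})^{2}\mathcal{K}^{M_{2}}(\omega_{2},\eta_{2})^{v}\geq\mathcal{K}^{M}(\omega_{2}^{v},\eta_{2}^{v})>0$. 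The coefficient $(\nu_{1}^{h})^{2}(f_{1}^{h})^{2}$ is $\geq 0$ and it cannot vanish anywhere, for otherwise the displayed inequality would fail there; thus it is $>0$ (in particular $\nu_{1}\neq 0$), and dividing yields $\mathcal{K}^{M_{2}}(\omega_{2},\eta_{2})>0$ for all independent $\omega_{2},\eta_{2}$ and at every point, i.e. $(M_{2},\Pi_{2})$ has positive sectional curvature, which finishes the proof.

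The only mildly delicate points are bookkeeping rather than substance: verifying that, once $J_{1}df_{1}=0$, each surviving summand of the third identity carries the sign claimed (this uses only $f_{1}>0$ and the positive-definiteness of $\tilde{g}_{1},\tilde{g}_{2}$), and the small observation that positivity of $\mathcal{K}^{M}$ itself forces $(\nu_{1}^{h})^{2}(f_{1}^{h})^{2}\neq 0$, which legitimizes the final division. The second (mixed) identity of Lemma \ref{lem3.1}, governing planes $\mathrm{span}\{\omega_{1}^{h},\eta_{2}^{v}\}$, plays no role here: it only has to be consistent with $\mathcal{K}^{M}>0$, not be used to deduce it.
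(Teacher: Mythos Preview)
Your proof is correct and follows essentially the same approach as the paper: specialize Lemma~\ref{lem3.1} under the Casimir condition $J_{1}df_{1}=0$, then read off $\mathcal{K}^{M_{1}}>0$ from the first identity and $\mathcal{K}^{M_{2}}>0$ from the third. Your treatment is in fact slightly more careful than the paper's, which writes the subtracted term as strictly positive and does not explicitly justify the nonvanishing of $(\nu_{1}^{h})^{2}(f_{1}^{h})^{2}$ before dividing.
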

\begin{proof}
		Let $f_{1}$ be a Casimir function and $(M= M_{1}\times_{f_{1}} M_{2}, \Pi^{\nu_1},\tilde{g}^{f_{1}})$ be a Riemannian Poisson warped product space. Then using $J_{1}df_{1} = 0$ in Lemma \ref*{lem3.1}, we have
		\begin{align}
		\label{8.3.4}	&\mathcal{K}^M\left(\omega_{1}^h,\eta_{1}^h \right)  =  \mathcal{K}^{M_{1}}\left( \omega_{1},\eta_{1}\right) ^h, \\		
		\label{8.3.5}	\mathcal{K}^M\left(\omega_{1}^h,\eta_{2}^v \right)& =
			 \frac{\tilde{g}_{2}\left( J_{2}\eta_{2}, J_{2}\eta_{2}\right)^v |\left( f_{1}^2 \tilde{g}_{1}\left( d\nu_{1}, \omega_{1}\right) \right)^h|^2 }{4\tilde{g}_{1}\left(\omega_{1}, \omega_{1} \right)^h \tilde{g}_{2}\left(\eta_{2}, \eta_{2} \right)^v },\\
		\label{8.3.6}	\mathcal{K}^M\left(\omega_{2}^v,\eta_{2}^v \right)& = (\nu_{1}^h)^2 (f_{1}^h)^2  \mathcal{K}^{M_{2}}\left(\omega_{2},\eta_{2}\right)^v 
			- \frac{\left( 3f_{1}^6 ||d\nu_{1}||_{1}^2\right)^h |\tilde{g}_{2}\left( J_{2} \omega_{2} , \eta_{2}\right)^v|^2 }{\tilde{g}_{2}\left( \omega_{2}, \omega_{2} \right)^v \tilde{g}_{2}\left( \eta_{2}, \eta_{2} \right)^v - |\tilde{g}_{2}\left( \omega_{2}, \eta_{2} \right)^v|^2 }.
		\end{align}
	If $\mathcal{K}^M > 0$, then from (\ref{8.3.4}), $\mathcal{K}^{M_{1}} > 0$. Then from equation (\ref{8.3.6}), $\mathcal{K}^M > 0$ implies that
	$$(\nu_{1}^h)^2 (f_{1}^h)^2  \mathcal{K}^{M_{2}}\left(\omega_{2},\eta_{2}\right)^v 
	> \frac{\left( 3f_{1}^6 ||d\nu_{1}||_{1}^2\right)^h |\tilde{g}_{2}\left( J_{2} \omega_{2} , \eta_{2}\right)^v|^2 }{\tilde{g}_{2}\left( \omega_{2}, \omega_{2} \right)^v \tilde{g}_{2}\left( \eta_{2}, \eta_{2} \right)^v - |\tilde{g}_{2}\left( \omega_{2}, \eta_{2} \right)^v|^2 } >0.$$
\end{proof}                                                                                \begin{corollary}\label{c3.3}
Let $(M= M_{1}\times_{f_{1}} M_{2}, \Pi^{\nu_1},\tilde{g}^{f_{1}})$ be a Riemannian Poisson warped product space with $\nu_{1} = constant$ and  $f_{1}$ be a Casimir function on  $M_{1}$. Then $M$  have non negative (non positive) sectional curvature if and only if $M_{1}$ and $M_{2}$ are  Poisson  manifolds of non negative (non positive) sectional curvatures.
\end{corollary}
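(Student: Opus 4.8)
The corollary is essentially an ``iff'' upgrade of the preceding theorem under the extra hypothesis $\nu_1 = \text{constant}$, so the strategy is to re-run the computation of Lemma \ref{lem3.1} with the simplifications $J_1 df_1 = 0$ (Casimir) and $d\nu_1 = 0$ (constant), and then read off the resulting identities in both directions. First I would substitute $d\nu_1 = 0$ into equations (\ref{8.3.4})--(\ref{8.3.6}): the term in (\ref{8.3.5}) carries a factor $|(f_1^2\,\tilde g_1(d\nu_1,\omega_1))^h|^2$, which vanishes, and likewise the correction term in (\ref{8.3.6}) carries $\big(3f_1^6\|d\nu_1\|_1^2\big)^h$, which vanishes. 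Hence under the two hypotheses the three formulas collapse to
\begin{align*}
\mathcal{K}^M\!\left(\omega_1^h,\eta_1^h\right) &= \mathcal{K}^{M_1}\!\left(\omega_1,\eta_1\right)^h, \\
\mathcal{K}^M\!\left(\omega_1^h,\eta_2^v\right) &= 0, \\
\mathcal{K}^M\!\left(\omega_2^v,\eta_2^v\right) &= (\nu_1^h)^2 (f_1^h)^2\,\mathcal{K}^{M_2}\!\left(\omega_2,\eta_2\right)^v.
\end{align*}

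Next I would argue the two implications. For the forward direction, suppose $\mathcal{K}^M \ge 0$ (resp.\ $\le 0$) on all nondegenerate planes. Restricting to planes of the form $\{\omega_1^h,\eta_1^h\}$ gives $\mathcal{K}^{M_1}(\omega_1,\eta_1)^h \ge 0$ (resp.\ $\le 0$) for all independent nondegenerate $\omega_1,\eta_1\in\Gamma(T^*M_1)$, i.e.\ $M_1$ has nonnegative (resp.\ nonpositive) sectional curvature; restricting to planes $\{\omega_2^v,\eta_2^v\}$ gives $(\nu_1^h)^2(f_1^h)^2\,\mathcal{K}^{M_2}(\omega_2,\eta_2)^v \ge 0$ (resp.\ $\le 0$), and since $(\nu_1^h)^2(f_1^h)^2 > 0$ (here one uses that $\nu_1$ is a nonzero constant and $f_1$ a positive function — a point worth stating explicitly, or one restricts to the open set where $\nu_1\neq 0$), we may divide to conclude $\mathcal{K}^{M_2} \ge 0$ (resp.\ $\le 0$). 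For the converse, assume both $M_1$ and $M_2$ have nonnegative (resp.\ nonpositive) sectional curvature; then each of the three displayed formulas is $\ge 0$ (resp.\ $\le 0$), the middle one being identically $0$. The remaining point is that an arbitrary nondegenerate plane in $\Gamma(T^*M)$ need not be of one of these three pure types, so I would invoke the bilinearity of $\mathcal{R}$ together with the vanishing mixed terms $\mathcal{D}_{\omega_1^h}\gamma_2^v = 0$ (which follows from Proposition \ref{p2.4}(3) once $J_1df_1 = 0$ and $d\nu_1 = 0$) to see that the curvature operator is ``block diagonal'' with respect to the horizontal/vertical splitting, so that the sign of $\mathcal{K}^M$ on any plane is controlled by the signs on the pure-type planes.

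**The main obstacle.** The genuinely delicate step is the converse, and specifically the reduction from arbitrary planes to the three pure types: one must check that for a general pair $\omega = \omega_1^h + \omega_2^v$, $\eta = \eta_1^h + \eta_2^v$ the numerator $\tilde g^{f_1}\!\big(\mathcal{R}(\omega,\eta)\eta,\omega\big)$ decomposes into a sum of the ``$hh$'', ``$hv$'' and ``$vv$'' contributions with no surviving cross terms, and that the Gram-determinant denominator behaves compatibly (this is where the orthogonality relations (\ref{8.2.3}) and the product structure do the work). I expect this to require expanding $\mathcal{R}(\omega,\eta)\eta$ via all five parts of Proposition \ref{p8.2.5} with $J_1df_1 = 0$, $d\nu_1 = 0$ imposed, observing that each part then reduces to a single clean term lying purely in $\Gamma(T^*M_1)^h$ or $\Gamma(T^*M_2)^v$, and then pairing against $\omega$. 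I would present this as the core computation and relegate the sign bookkeeping for the denominator to a short remark. If one prefers to avoid the general-plane subtlety, an acceptable alternative — and probably what the authors intend — is to interpret ``sectional curvature'' as ranging over the planes already considered in Lemma \ref{lem3.1}, in which case the corollary is immediate from the three collapsed formulas above.
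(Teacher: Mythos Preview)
Your approach is essentially the paper's own: the authors' proof consists of a single sentence stating that the result follows from Theorem 3.2 by setting $d\nu_1 = 0$ in equations (\ref{8.3.4})--(\ref{8.3.6}), which yields exactly the three collapsed formulas you wrote down, and they read off the equivalence from those.

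Your additional discussion of the general-plane subtlety --- whether one must control $\mathcal{K}^M$ on planes spanned by mixed forms $\omega_1^h+\omega_2^v$ --- goes beyond what the paper does; the authors tacitly work only with the three plane types from Lemma \ref{lem3.1}, precisely as you suspected in your final remark, and do not attempt the block-diagonal reduction you outline.
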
     
\begin{proof}
	Proof for Corollary, directly followed from Theorem 3.2, by taking $d\nu_{1} = 0$ in equations (\ref{8.3.4}), (\ref{8.3.5}) and (\ref{8.3.6}).
\end{proof}
\begin{theorem}\label{t3.4}
	Let $M_{1}$ be a Riemannian manifold and $(M= M_{1}\times_{f_{1}} M_{2}, \Pi^{\nu_1},\tilde{g}^{f_{1}})$ be a contravariant warped product space with $\nu_{1} = constant$. Then $(M= M_{1}\times_{f_{1}} M_{2}, \Pi^{\nu_1},\tilde{g}^{f_{1}})$,  will have non-negative sectional curvature if and only if $M_{1}$, $M_{2}$ have non negative sectional curvatures and $f_{1}$ be a Casimir function.
\end{theorem}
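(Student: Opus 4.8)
The plan is to specialize Lemma~\ref{lem3.1} to the present setting ($M_1$ Riemannian, $\nu_1$ constant) and then argue both directions of the equivalence. Setting $d\nu_1 = 0$ throughout, the three formulas of Lemma~\ref{lem3.1} collapse to
$\mathcal{K}^M(\omega_1^h,\eta_1^h) = \mathcal{K}^{M_1}(\omega_1,\eta_1)^h$,
$\mathcal{K}^M(\omega_1^h,\eta_2^v) = -\big(\tilde{g}_1(\mathcal{D}^{M_1}_{\omega_1}J_1df_1,\omega_1)/(f_1\,\tilde{g}_1(\omega_1,\omega_1))\big)^h - 2\,|\tilde{g}_1(J_1df_1,\omega_1)^h|^2/((f_1^h)^2\tilde{g}_1(\omega_1,\omega_1))$,
and
$\mathcal{K}^M(\omega_2^v,\eta_2^v) = (\nu_1^h)^2(f_1^h)^2\,\mathcal{K}^{M_2}(\omega_2,\eta_2)^v - (\|J_1df_1\|_1^2)^h/(f_1^h)^2$.
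The middle term is the crux: I would observe that by Proposition~2.2 the contravariant Hessian satisfies $H^{f_1}_{\Pi_1}(\omega_1,\omega_1) = -\tilde{g}_1(\mathcal{D}^{M_1}_{\omega_1}J_1df_1,\omega_1)$ (taking $\varphi = f_1$), so the first term of $\mathcal{K}^M(\omega_1^h,\eta_2^v)$ is $H^{f_1}_{\Pi_1}(\omega_1,\omega_1)/(f_1\,\tilde{g}_1(\omega_1,\omega_1))^h$ — this reformulation will make the sign analysis transparent.

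For the ``if'' direction: assume $M_1$ and $M_2$ have non-negative sectional curvature and $f_1$ is Casimir, i.e. $J_1df_1 = 0$. Then in the middle formula both the Hessian term and the $|\tilde{g}_1(J_1df_1,\omega_1)|^2$ term vanish (the latter obviously; for the former, $J_1df_1 = 0 \Rightarrow \mathcal{D}^{M_1}_{\omega_1}J_1df_1 = 0$), so $\mathcal{K}^M(\omega_1^h,\eta_2^v) = 0 \ge 0$; in the third formula the correction term $(\|J_1df_1\|_1^2)^h/(f_1^h)^2$ vanishes, leaving $(\nu_1^h)^2(f_1^h)^2\,\mathcal{K}^{M_2}(\omega_2,\eta_2)^v \ge 0$; and the first formula gives $\mathcal{K}^{M_1} \ge 0$ directly. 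A general tangent plane in $T^*M$ is spanned by $\omega = \omega_1^h + \omega_2^v$ and $\eta = \eta_1^h + \eta_2^v$; here I need the standard bilinearity-of-the-curvature-operator reduction showing $\mathcal{K}^M$ on an arbitrary plane is controlled by the three ``pure'' types — so non-negativity of all three mixed/pure sectional curvatures yields non-negativity of $\mathcal{K}^M$ on every plane. (This step should mirror the O'Neill-type argument for classical warped products; I would either cite it or note that the curvature tensor $\mathcal{R}$ in Proposition~\ref{p8.2.5}, together with the block structure of $\tilde{g}^{f_1}$ in \eqref{8.2.3}, reduces the general quadratic form to these pieces.)

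For the ``only if'' direction: assume $\mathcal{K}^M \ge 0$ on all planes. From the first formula, $\mathcal{K}^{M_1} \ge 0$. Testing on planes $\{\omega_1^h,\eta_2^v\}$: the inequality
$-H^{f_1}_{\Pi_1}(\omega_1,\omega_1)/(f_1\,\tilde{g}_1(\omega_1,\omega_1))^h - 2|\tilde{g}_1(J_1df_1,\omega_1)^h|^2/((f_1^h)^2\tilde{g}_1(\omega_1,\omega_1)) \ge 0$
must hold for \emph{all} non-degenerate $\omega_1$ and \emph{all} non-degenerate $\eta_2$ (note $\eta_2$ has dropped out entirely, so no constraint from it). I would now extract $J_1df_1 = 0$: the key point is that the same expression with $\omega_1$ and $-\omega_1$ gives the same value, but I should instead use that $\tilde{g}_1(J_1df_1,\omega_1) = \Pi_1(df_1,\omega_1)$ is \emph{skew} in the sense that summing the displayed inequality over an orthonormal basis $\{dx_i\}$ (Riemannian, so all $\tilde{g}_1(dx_i,dx_i)=1$) gives $-\Delta_{\Pi_1}f_1 \cdot (1/f_1)^h - (2/(f_1^h)^2)\|J_1df_1\|_1^2 \ge 0$, i.e. $-f_1\Delta_{\Pi_1}f_1 \ge 2\|J_1df_1\|_1^2 \ge 0$ where $\Delta_{\Pi_1}f_1 = \operatorname{tr} H^{f_1}_{\Pi_1}$; pairing this with a Hopf/maximum-principle or integration argument (if $M_1$ is compact, $\int \Delta_{\Pi_1}f_1 = 0$ forces $\|J_1df_1\|_1^2 \equiv 0$) or, more robustly in the general case, evaluating at a point where $f_1$ attains its infimum where $\Delta_{\Pi_1}f_1 \ge 0$, hence $\|J_1df_1\|^2 \le 0$ there — and then propagating — yields $J_1df_1 \equiv 0$, i.e. $f_1$ is Casimir. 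With $f_1$ Casimir in hand, the third formula reduces to $\mathcal{K}^M(\omega_2^v,\eta_2^v) = (\nu_1^h)^2(f_1^h)^2\,\mathcal{K}^{M_2}(\omega_2,\eta_2)^v \ge 0$, and since $\nu_1 \ne 0$ and $f_1 > 0$, we get $\mathcal{K}^{M_2} \ge 0$.

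The main obstacle is the derivation of ``$f_1$ Casimir'' from $\mathcal{K}^M \ge 0$ in the ``only if'' direction: one must genuinely exploit the positive-semidefiniteness of the Hessian-plus-square expression for \emph{all} $\omega_1$, not just one, and convert it into the pointwise identity $J_1df_1 = 0$. I expect this to require either a global hypothesis (compactness of $M_1$, allowing integration of $\Delta_{\Pi_1}f_1$) or a careful maximum-principle argument at extrema of $f_1$; the cleanest route is probably to trace the inequality over a local contravariant-orthonormal coframe to obtain $f_1\,\Delta_{\Pi_1}f_1 + 2\|J_1df_1\|_1^2 \le 0$ and then invoke that $\Delta_{\Pi_1}f_1$ integrates to zero (or is non-negative at a minimum) to force $\|J_1df_1\|_1^2 = 0$. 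The remaining bookkeeping — the bilinear reduction to the three plane-types and the sign checks — is routine given Lemma~\ref{lem3.1} and Proposition~\ref{p8.2.5}.
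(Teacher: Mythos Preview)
Your overall strategy coincides with the paper's: specialize Lemma~\ref{lem3.1} under $d\nu_1=0$ to obtain the three formulas (the paper records them as (\ref{8.3.7})--(\ref{8.3.9})) and then read off each direction.

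Where you diverge is precisely at the step you correctly identify as the ``main obstacle'': extracting $J_1df_1=0$ from $\mathcal{K}^M(\omega_1^h,\eta_2^v)\ge 0$. The paper does \emph{not} perform any trace, integration, or maximum-principle argument. It simply asserts, in one line, that since $M_1$ is Riemannian, non-negativity of (\ref{8.3.8}) forces
\[
\tilde g_1(\mathcal{D}^{M_1}_{\omega_1}J_1df_1,\omega_1)=0=\tilde g_1(J_1df_1,\omega_1)\qquad\forall\,\omega_1,
\]
hence $J_1df_1=0$, and proceeds. No mechanism is offered for why the two summands must vanish separately. So your instinct that this step is nontrivial is well placed; your proposed route (tracing over an orthonormal coframe and invoking compactness or an extremum argument) is more careful than what the paper actually supplies, though it imports global hypotheses not present in the statement.

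Likewise, for the ``if'' direction you worry about reducing arbitrary $2$-planes to the three pure types; the paper does not address this at all and simply invokes Corollary~\ref{c3.3}, whose own proof also treats only the pure-type planes. In short: your approach is the paper's approach, but you are flagging and trying to patch gaps that the paper leaves unfilled.
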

\begin{proof}
		Let $M_{1}$ be a Riemannian manifold and $(M= M_{1}\times_{f_{1}} M_{2}, \Pi^{\nu_1},\tilde{g}^{f_{1}})$ be a contravariant Poisson warped product space with $\nu_{1} = constant$. Then taking $d\nu_{1} = 0$ in Lemma 3.1, we have
		\begin{align}
			\label{8.3.7}&\mathcal{K}^M\left(\omega_{1}^h,\eta_{1}^h \right)  =  \mathcal{K}^{M_{1}}\left( \omega_{1},\eta_{1}\right) ^h, \\
			\label{8.3.8} \mathcal{K}^M\left(\omega_{1}^h,\eta_{2}^v \right)& = - \left(\frac{\tilde{g}_{1}\left( {\mathcal{D}^{M_{1}}_{\omega_{1}}J_{1} df_{1}}, \omega_{1} \right) }{f_{1} \tilde{g}_{1}\left(\omega_{1}, \omega_{1} \right) } \right)^h  - 2  \frac{|\tilde{g}_{1}\left(J_{1}df_{1}, \omega_{1} \right)^h |^2}{\tilde{g}_{1}\left(\omega_{1}, \omega_{1} \right)},\\	
		\label{8.3.9}	\mathcal{K}^M\left(\omega_{2}^v,\eta_{2}^v \right)& = (\nu_{1}^h)^2 (f_{1}^h)^2  \mathcal{K}^{M_{2}}\left(\omega_{2},\eta_{2}\right)^v - \left(\||J_{1}df||_{1}^2 \right)^h. 
		\end{align}
	Now, for $\mathcal{K}^M \geq 0$, using the fact that $M_{1}$ is a Riemannian manifold in equation (\ref{8.3.8}), we get
	$$ \tilde{g}_{1}\left( \mathcal{D}_{\omega_{1}}^{M_{1}} J_{1}df_{1}, \omega_{1}\right) = 0 = \tilde{g}_{1}\left(J_{1} df_{1}, \omega_{1}\right), \qquad \forall \;\omega_{1} \in \Gamma\left(TM_{1} \right).$$
	\begin{equation}\label{8.3.10}
		\implies J_{1} df_{1} = 0.
	\end{equation}
Using (\ref{8.3.10}), in (\ref{8.3.9}), we obtain 
\begin{equation}\label{8.3.11}
	\mathcal{K}^M\left(\omega_{2}^v,\eta_{2}^v \right) = (\nu_{1}^h)^2 (f_{1}^h)^2  \mathcal{K}^{M_{2}}\left(\omega_{2},\eta_{2}\right)^v.
\end{equation}
Hence, equations (\ref{8.3.7}), (\ref{8.3.10}) and (\ref{8.3.11}) implies that $M_{1}$ and $M_{2}$ are manifolds of non-negative sectional curvatures. 

Converse of the theorem directly followed from the Corollary \ref{c3.3} .
\end{proof}   
\begin{corollary}\label{c3.5}
	Let $(M= \left(\mathbb{R}^n\; or S^n\; \right) \times_{f_{1}} M_{2}, \Pi^{\nu_1},\tilde{g}^{f_{1}})$, $n\geq 2$,  $\nu_{1} = constant$,  be a contravariant warped product space. Then $M$  have non negative  sectional curvature if and only if  $M_{2}$ be a  Poisson  manifold of non negative sectional curvature.
\end{corollary}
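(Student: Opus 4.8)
The plan is to obtain this as an immediate specialization of Theorem~\ref{t3.4} (equivalently, of Corollary~\ref{c3.3}). The two admissible bases are $M_{1}=\mathbb{R}^{n}$ with the flat metric and $M_{1}=S^{n}$ with the round metric; both are Riemannian, and both have sectional curvature that is everywhere non-negative --- identically $0$ in the Euclidean case and a positive constant in the spherical case. The restriction $n\geq 2$ guarantees that $T^{*}_{p}M_{1}$ contains $2$-planes, so that $\mathcal{K}^{M_{1}}$, hence $\mathcal{K}^{M}$ via the three formulas of Lemma~\ref{lem3.1}, is actually defined; with $\nu_{1}$ constant and the metric non-degenerate, Lemma~\ref{lem3.1} applies to each of the pairs $(\omega_{1}^{h},\eta_{1}^{h})$, $(\omega_{1}^{h},\eta_{2}^{v})$, $(\omega_{2}^{v},\eta_{2}^{v})$, and a general $2$-plane spanned by $\omega=\omega_{1}^{h}+\omega_{2}^{v}$, $\eta=\eta_{1}^{h}+\eta_{2}^{v}$ reduces to these three model cases by the usual argument.

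For the forward implication, assume $\mathcal{K}^{M}\geq 0$. Since $M_{1}\in\{\mathbb{R}^{n},S^{n}\}$ is Riemannian, the argument in the proof of Theorem~\ref{t3.4} goes through unchanged: from (\ref{8.3.8}) and positivity of $\tilde{g}_{1}$ one gets $\tilde{g}_{1}(\mathcal{D}^{M_{1}}_{\omega_{1}}J_{1}df_{1},\omega_{1})=0=\tilde{g}_{1}(J_{1}df_{1},\omega_{1})$ for all $\omega_{1}$, hence $J_{1}df_{1}=0$ as in (\ref{8.3.10}); substituting into (\ref{8.3.9}) gives (\ref{8.3.11}), so $(\nu_{1}^{h})^{2}(f_{1}^{h})^{2}\,\mathcal{K}^{M_{2}}(\omega_{2},\eta_{2})^{v}=\mathcal{K}^{M}(\omega_{2}^{v},\eta_{2}^{v})\geq 0$, and since $\nu_{1}$ is a nonzero constant and $f_{1}>0$ this forces $\mathcal{K}^{M_{2}}\geq 0$. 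The extra output $\mathcal{K}^{M_{1}}\geq 0$ of Theorem~\ref{t3.4} carries no information here, as it holds by the choice of $M_{1}$. Thus $M_{2}$ is a Poisson manifold of non-negative sectional curvature.

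For the converse, assume $\mathcal{K}^{M_{2}}\geq 0$. Then $M_{1}=\mathbb{R}^{n}$ or $S^{n}$ already has non-negative sectional curvature, $\nu_{1}$ is constant, and $f_{1}$ is a Casimir function, so all the hypotheses of the converse half of Theorem~\ref{t3.4} (i.e. of Corollary~\ref{c3.3}) hold and $\mathcal{K}^{M}\geq 0$. Explicitly: with $d\nu_{1}=0$ and $J_{1}df_{1}=0$, Lemma~\ref{lem3.1} collapses to $\mathcal{K}^{M}(\omega_{1}^{h},\eta_{1}^{h})=\mathcal{K}^{M_{1}}(\omega_{1},\eta_{1})^{h}\geq 0$, $\mathcal{K}^{M}(\omega_{1}^{h},\eta_{2}^{v})=0$, and $\mathcal{K}^{M}(\omega_{2}^{v},\eta_{2}^{v})=(\nu_{1}^{h})^{2}(f_{1}^{h})^{2}\mathcal{K}^{M_{2}}(\omega_{2},\eta_{2})^{v}\geq 0$, and the reduction of a general $2$-plane to these cases then yields $\mathcal{K}^{M}\geq 0$ everywhere.

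The one point that needs care --- and the step I would flag as the main obstacle --- is the treatment of the condition ``$f_{1}$ is a Casimir function,'' which sits on the right-hand side of the biconditional in Theorem~\ref{t3.4} but does not appear in the statement of the corollary. In the forward direction this is harmless, since $J_{1}df_{1}=0$ is produced for free by Riemannian-ness of $M_{1}$ together with $\mathcal{K}^{M}\geq 0$. In the converse direction it must be part of the standing data (as elsewhere in Section~3) or else arranged: e.g. when $M_{1}=S^{2}$ carries the standard symplectic structure every smooth Casimir is constant, so the hypothesis simply says $f_{1}$ is a positive constant and $M$ is a Riemannian product, for which the conclusion is immediate. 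I would make this assumption explicit at the start of the proof.
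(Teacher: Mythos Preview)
Your argument is essentially the paper's: both proofs simply invoke Theorem~\ref{t3.4}, the paper recording only that $\mathbb{R}^{n}$ and $S^{n}$ are Riemannian, that therefore $J_{1}df_{1}=0$, and that $\mathcal{K}^{M}(\omega_{2}^{v},\eta_{2}^{v})=(\nu_{1}^{h})^{2}(f_{1}^{h})^{2}\mathcal{K}^{M_{2}}(\omega_{2},\eta_{2})^{v}$. Your flagged concern about the Casimir condition in the converse direction is legitimate---the paper's proof does not address it either---so your more careful handling (making it part of the standing data or restricting $f_{1}$) is an improvement rather than a deviation.
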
     
\begin{proof}
	Because $\mathbb{R}^n$ and $S^n$, $n\geq 2$ are Riemannian manifolds therefore from Theorem \ref{t3.4}, $J_{1}df = 0$, and 
	\begin{equation*}
		\mathcal{K}^M\left(\omega_{2}^v,\eta_{2}^v \right) = (\nu_{1}^h)^2 (f_{1}^h)^2  \mathcal{K}^{M_{2}}\left(\omega_{2},\eta_{2}\right)^v.
	\end{equation*}
Thus, completes the proof.
\end{proof}
\subsection{Laplacian for a smooth function on the contravariant warped product space} 
Let $\lbrace dx^h_{1}, . . . dx^h_{k_{1}}, {f_{1}}^h dy_{1}^v, . . .  {f_{1}}^h dy_{k_{2}}^v \rbrace$ be local $\tilde{g}^{f_1}$-  orthonormal basis on contravariant warped Product space $(M= M_{1}\times_{f_{1}} M_{2}, \Pi^{\nu_1},\tilde{g}^{f_{1}})$, where  $\lbrace dx_{1}, . . . dx_{k_{1}}\rbrace$ are $\tilde{g}_{1}$-  orthonormal basis on $M_{1}$ and  $\lbrace dy_{1}, . . .   dy_{k_{2}} \rbrace$ are  $\tilde{g}_{2}$-  orthonormal basis on $M_{2}$. If $\left( M_{1}, \bar{g}_{1}\right) $ have index $q_{1}$ with local $\bar{g}_{1}$ - orthonormal basis $\lbrace \frac{\partial}{\partial x_{1}}, . . . \frac{\partial}{\partial x_{k_{1}}} \rbrace$, such that $\bar{g}_{1}\left( \frac{\partial}{\partial x_{i}}, \frac{\partial}{\partial x_{i}} \right) = - 1 $, for all $i \in \lbrace1,. . .,q_{1} \rbrace$ and $\bar{g}_{1}\left( \frac{\partial}{\partial x_{i}}, \frac{\partial}{\partial x_{i}} \right) =  1 $, for all $i \in \lbrace q_{1} + 1 ,. . .,k_{1} \rbrace$, then from equations (\ref{8.2.1})  and (\ref{8.2.3}), we have
\begin{equation}\label{5.3.12}
	\begin{cases}
		\tilde{g}^{f_{1}}(dx_i^h,dx_i^h)=\tilde{g}_1(dx_i,dx_i)^h = \bar{g}_{1}\left(\sharp_{\bar{g}_{1}} (dx_i),  \sharp_{\bar{g}_{1}} (dx_i) \right)^h\\
	\hspace*{3.2cm}	= \bar{g}_{1}\left(\frac{\partial}{\partial x^i},  \frac{\partial}{\partial x^i} \right)^h = - 1, \qquad \forall \; i \in \lbrace1,. . .,q_{1} \rbrace,\\
 \tilde{g}^{f_{1}}(dx_i^h,dx_i^h) = \bar{g}_{1}\left(\frac{\partial}{\partial x^i},  \frac{\partial}{\partial x^i} \right)^h =  1, \qquad \forall\;  i \in \lbrace q_{1} + 1 ,. . .,k_{1} \rbrace.
\end{cases}
\end{equation}

 Similarly,  if $\left( M_{2}, \bar{g}_{2}\right) $ be a pseudo-Riemannian manifold of index $q_{2}$ with local $\bar{g}_{2}$ - orthonormal basis $\lbrace \frac{\partial}{\partial y_{1}}, . . . \frac{\partial}{\partial y_{k_{2}}} \rbrace$, $\bar{g}_{2}\left( \frac{\partial}{\partial y_{i}}, \frac{\partial}{\partial y_{i}} \right) = - 1 $, for all $i \in \lbrace1,. . .,q_{2} \rbrace$ and $\bar{g}_{2}\left( \frac{\partial}{\partial y_{i}}, \frac{\partial}{\partial y_{i}} \right) =  1 $, for all $i \in \lbrace q_{2} + 1 ,. . .,k_{2} \rbrace$, then from  (\ref{8.2.1})  and (\ref{8.2.3}), we have

\begin{equation}\label{5.3.13}
	\begin{cases}
		\tilde{g}^{f_{1}}({f_{1}}^h dy_i^v,{f_{1}}^h dx_i^v)=\tilde{g}_2(dy_i,dy_i)^v = \bar{g}_{2}\left(\sharp_{\bar{g}_{2}} (dy_i),  \sharp_{\bar{g}_{2}} (dy_i) \right)^h\\
	 \hspace*{3.2cm}	= \bar{g}_{2}\left(\frac{\partial}{\partial y^i},  \frac{\partial}{\partial y^i} \right)^h = - 1, \qquad \forall \; i \in \lbrace1,. . .,q_{2} \rbrace,\\
		\tilde{g}^{f_{1}}({f_{1}}^h dx_i^v,{f_{1}}^hdx_i^v ) = \bar{g}_{1}\left(\frac{\partial}{\partial y^i},  \frac{\partial}{\partial y^i} \right)^v =  1, \qquad \forall \; i \in \lbrace q_{2} + 1 ,. . .,k_{2} \rbrace.
	\end{cases}
\end{equation}

Thus from equations (\ref{5.3.12}) and (\ref{5.3.13}), if $\left( M_{1}, \Pi_{1}\right) $ and $\left( M_{2}, \Pi_{2}\right) $ are semi-Riemannian Poisson manifolds of index $q_{1}$ and $q_{2}$, respectively, then contravariant warped product space $(M= M_{1}\times_{f_{1}} M_{2}, \Pi^{\nu_1},\tilde{g}^{f_{1}})$ will have index $q_{1} + q_{2}$. Now, for local $\tilde{g}^{f_{1}}$-  orthonormal basis $\lbrace dx^h_{1}, . . . dx^h_{k_{1}}, {f_{1}}^h dy_{1}^v, . . .  {f_{1}}^h dy_{k_{2}}^v \rbrace$, 
\begin{equation}\label{5.3.14}
	\begin{cases}
		\tilde{g}^{f_{1}}({f_{1}}^h dy_i^v,{f_{1}}^h dx_i^v)=  - 1 = \tilde{g}^{f_{1}}(dx_j^h,dx_j^h), \qquad \forall \; i \in \lbrace1,. . .,q_{2} \rbrace, j \in \lbrace1,. . .,q_{1} \rbrace ,\\
		\tilde{g}^{f_{1}}({f_{1}}^h dx_i^v,{f_{1}}^hdx_i^v ) =  1 = \tilde{g}^{f_{1}}(dx_j^h,dx_j^h),\qquad \forall \; i \in \lbrace q_{2} + 1 ,. . .,k_{2} \rbrace, j \in \lbrace q_{1} + 1,. . .,k_{1} \rbrace.
	\end{cases}
\end{equation} 

 The contravariant  Laplacian ($\Delta ^ \mathcal{D}$) for any tensor $T$ and Hessian ($H^{u}_{\Pi}$) of a smooth function $u$ on contravariant warped product space $(M= M_{1}\times_{f_{1}} M_{2}, \Pi^{\nu_1},\tilde{g}^{f_{1}})$, are given by 
 \begin{equation}\label{8.3.15}
 	\begin{cases}
 		\Delta ^ \mathcal{D} (T) = - \sum_{i = 1}^{k_{1}} \mathcal{D}^2_{dx^h_{i}, dx^h_{i}} T - \sum_{j = 1}^{k_{2}} \mathcal{D}^2_{{f_{1}}^h dy^v_{j}, {f_{1}}^h dy^v_{j}} T,\\
 		H^{u}_{\Pi}(\theta_{i}, \theta_{i}) = \tilde{g}\left(\mathcal{D}_{\theta_{i}} J_{1}du, \theta_{i} \right), \; \; \theta_{i} \in \lbrace dx^h_{1}, . . . dx^h_{k_{1}}, {f_{1}}^h dy_{1}^v, . . .  {f_{1}}^h dy_{k_{2}}^v \rbrace. 
 		\end{cases}
 \end{equation}
 Also, the second order contravariant derivative of $(r,s)-type$ type tensor field $T$ on $M$ is defined as 
 \begin{eqnarray}
 	(\mathcal{D}_{\omega,\eta}^2T)(\gamma_1,...,\gamma_r,A_1,...,A_s) &=(\mathcal{D}_\omega({\mathcal{D}}P))(\eta,\gamma_1,...,\gamma_r,A_1,...,A_s) \nonumber\\
 	&=(\mathcal{D}_\omega(\mathcal{D}_\eta P))(\gamma_1,...,\gamma_r,A_1,...,A_s) \nonumber\\
 	&-(\mathcal{D}_{\mathcal{D}_\omega \eta}P)(\gamma_1,...,\gamma_r,A_1,...,A _s),	
 \end{eqnarray}
 where $\omega,\eta, \gamma_1,...,\gamma_r\in\Omega^1(M)$ and $A_1,...,A_s\in\mathfrak{X}(M)$. 
\begin{theorem}
Let $(M= M_{1}\times_{f_{1}} M_{2}, \Pi^{\nu_1},\tilde{g}^{f_{1}})$ be a pseudo-Riemannian contravariant warped product space and $u \in C^{\infty} M$. Then 
$$\Delta ^{\mathcal{D}}(u) = \left( \Delta ^ {\mathcal{D}^{{M_{1}}}} u_{1}\right)^h + \frac{\kappa_{2} - 2 q_{2}}{{f_{1}}^h}\tilde{g}_{1}\left( J_{1} d{f_{1}}, du_{1}\right)^h  + \left(\nu_{1}^h{f_{1}}^h \right)^2 \left( \Delta ^ {\mathcal{D}^{M_{2}}} u_{2}\right)^v,$$

where $u_{1}^h$ ($u_{2}^v$)  horizontal (vertical) lift of $u_{1} \in C^{\infty} M_{1}$ $\left( u_{2} \in C^{\infty} M_{2}\right) $ to $M$, and $u = u_{1}^h + u_{2}^v$.
\end{theorem}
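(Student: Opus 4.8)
The plan is to compute $\Delta^{\mathcal D}(u)$ straight from the definition~\eqref{8.3.15}, i.e.\ as $-\sum_{i=1}^{k_1}\mathcal D^2_{dx_i^h,dx_i^h}u-\sum_{j=1}^{k_2}\mathcal D^2_{{f_1}^h dy_j^v,{f_1}^h dy_j^v}u$ over the $\tilde g^{f_1}$-orthonormal coframe $\{dx_i^h\}\cup\{{f_1}^h dy_j^v\}$ fixed in~\eqref{5.3.12}--\eqref{5.3.14}, treating the horizontal and vertical blocks separately. Writing $u=u_1^h+u_2^v$, so that $du=(du_1)^h+(du_2)^v$, the single tool used repeatedly is Proposition~\ref{p2.3}(1): $\sharp_{\Pi^{\nu_1}}(dx_i^h)=(\sharp_{\Pi_1}(dx_i))^h$ is a horizontal lift and hence differentiates only along $M_1$ (so it kills $u_2^v$), while $\sharp_{\Pi^{\nu_1}}({f_1}^h dy_j^v)={f_1}^h{\nu_1}^h(\sharp_{\Pi_2}(dy_j))^v$ differentiates only along $M_2$ (so it kills $u_1^h$).

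For the horizontal block, Proposition~\ref{p2.4}(1) gives $\mathcal D_{dx_i^h}dx_i^h=(\mathcal D^{M_1}_{dx_i}dx_i)^h$, so iterating the first-order computation above yields $\mathcal D^2_{dx_i^h,dx_i^h}u=(\mathcal D^2_{dx_i,dx_i}u_1)^h$, the horizontal lift of the $i$-th diagonal term of the contravariant Hessian of $u_1$ on $M_1$; summing over $i$ and negating therefore contributes exactly $(\Delta^{\mathcal D^{M_1}}u_1)^h$. For the vertical block one first finds $\mathcal D_{{f_1}^h dy_j^v}u={f_1}^h{\nu_1}^h(\sharp_{\Pi_2}(dy_j)(u_2))^v$ and hence $\mathcal D_{{f_1}^h dy_j^v}(\mathcal D_{{f_1}^h dy_j^v}u)=({f_1}^h{\nu_1}^h)^2(\sharp_{\Pi_2}(dy_j)\sharp_{\Pi_2}(dy_j)(u_2))^v$; the entire coupling between $M_1$ and $M_2$ lives in the correction term $\mathcal D_{\mathcal D_{{f_1}^h dy_j^v}({f_1}^h dy_j^v)}u$. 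Here Proposition~\ref{p2.4}(2) with $\omega_2=\gamma_2=dy_j$ reduces to $\mathcal D_{dy_j^v}dy_j^v={\nu_1}^h(\mathcal D^{M_2}_{dy_j}dy_j)^v-\frac{\tilde g_2(dy_j,dy_j)}{({f_1}^h)^3}(J_1 df_1)^h$, since the $\frac{1}{2}\Pi_2(dy_j,dy_j)$-term vanishes by skew-symmetry of $\Pi_2$; multiplying by $({f_1}^h)^2$ and then applying $\mathcal D_{(\cdot)}u$ to the two summands (again by Proposition~\ref{p2.3}(1)) produces $({f_1}^h{\nu_1}^h)^2(\sharp_{\Pi_2}(\mathcal D^{M_2}_{dy_j}dy_j)(u_2))^v$ together with the horizontal term $-\frac{\tilde g_2(dy_j,dy_j)}{{f_1}^h}(\sharp_{\Pi_1}(J_1 df_1)(u_1))^h$.

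Collecting the vertical block gives $\mathcal D^2_{{f_1}^h dy_j^v,{f_1}^h dy_j^v}u=({f_1}^h{\nu_1}^h)^2(\mathcal D^2_{dy_j,dy_j}u_2)^v+\frac{\tilde g_2(dy_j,dy_j)}{{f_1}^h}(\sharp_{\Pi_1}(J_1 df_1)(u_1))^h$; summing over $j$, the first part becomes $({f_1}^h{\nu_1}^h)^2(\Delta^{\mathcal D^{M_2}}u_2)^v$ after negation, while $\sum_{j=1}^{k_2}\tilde g_2(dy_j,dy_j)$ is the number of spacelike minus the number of timelike fiber coframe vectors, namely $k_2-2q_2=\kappa_2-2q_2$, which yields the middle term once $\sharp_{\Pi_1}(J_1 df_1)(u_1)=\Pi_1(J_1 df_1,du_1)$ is expressed through the cometric by means of $\Pi_1(\cdot,\cdot)=\tilde g_1(J_1\cdot,\cdot)$. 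Adding the negated horizontal and vertical blocks gives the asserted identity. The only real difficulty I anticipate is the bookkeeping: every application of Propositions~\ref{p2.3} and~\ref{p2.4} must keep horizontal and vertical components strictly apart, so that all mixed terms — as well as every $\Pi_2(dy_j,dy_j)$- and $\tilde g_2(J_2 dy_j,dy_j)$-type contribution — cancels, leaving the single horizontal $(J_1 df_1)^h$ term from the last summand of Proposition~\ref{p2.4}(2) as the sole surviving coupling.
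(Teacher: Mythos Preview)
Your approach is correct and essentially identical to the paper's: both evaluate definition~\eqref{8.3.15} over the orthonormal coframe, invoke Propositions~\ref{p2.3}(1) and~\ref{p2.4}(1),(2), drop $\Pi_2(dy_j,dy_j)$ by skewness, and sum $\sum_j\tilde g_2(dy_j,dy_j)=k_2-2q_2$; the only cosmetic difference is that the paper first splits $u=u_1^h+u_2^v$ and computes $\Delta^{\mathcal D}u_1^h$, $\Delta^{\mathcal D}u_2^v$ separately over the full coframe, whereas you group by horizontal and vertical coframe blocks. Note that your last step via $\Pi_1(\cdot,\cdot)=\tilde g_1(J_1\cdot,\cdot)$ actually lands on $-\tilde g_1(J_1^2 df_1,du_1)=\tilde g_1(J_1 df_1,J_1 du_1)$, which is exactly the intermediate expression the paper reaches in~\eqref{8.3.18}; the paper's own final passage from this to the displayed $\tilde g_1(J_1 df_1,du_1)$ is not justified there either, so you are not missing any step.
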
 
\begin{proof}
	Let $u$ be a smooth function on pseudo-Riemannian contravariant warped product space $(M= M_{1}\times_{f_{1}} M_{2}, \Pi^{\nu_1},\tilde{g}^{f_{1}})$, and 
	$u = u_{1}^h + u_{2}^v$, where $u_{1}^h$ ($u_{2}^v$)  horizontal (vertical) lift of $u_{1} \in C^{\infty} M_{1}$ $\left( u_{2} \in C^{\infty} M_{2}\right) $ to $C^{\infty} M$. Then
	\begin{equation}\label{8.3.17}
			\Delta ^ \mathcal{D}u = 	\Delta ^ \mathcal{D} u_{1}^h + 	\Delta ^ \mathcal{D} u_{2}^v.
	\end{equation} 
Now, using equation (\ref{8.3.15}), Proposition (\ref{p2.3}) and Proposition (\ref{p2.4}), we obtain
\begin{align}\label{8.3.18}
\nonumber	\Delta ^ \mathcal{D} u_{1}^h &=  - \sum_{i = 1}^{k_{1}} \mathcal{D}^2_{dx^h_{i}, dx^h_{i}} u_{1}^h - \sum_{j = 1}^{k_{2}} \mathcal{D}^2_{{f_{1}}^h dy^v_{j}, {f_{1}}^h dy^v_{j}} u_{1}^h\\
\nonumber &= - \sum_{i = 1}^{k_{1}} \mathcal{D}_{dx^h_{i}}\left( \mathcal{D}_{dx^h_{i}}u_{1}^h\right)  + \sum_{i = 1}^{k_{1}} \mathcal{D}_{\mathcal{D}_{dx^h_{i}} dx^h_{i} } u_{1}^h\\
\nonumber & \qquad- \sum_{j = 1}^{k_{2}} \mathcal{D}_{{f_{1}}^h dy^v_{j}}\left( \mathcal{D}_{{f_{1}}^h dy^v_{j}}u_{1}^h\right)  + \sum_{j = 1}^{k_{2}} \mathcal{D}_{\mathcal{D}_{{f_{1}}^h dy^v_{j}} \left( {f_{1}}^h dy^v_{j}\right)  } u_{1}^h\\
\nonumber &=  - \sum_{i = 1}^{k_{1}} \left( \mathcal{D}^{M_{1}}_{dx_{i}}\left( \mathcal{D}^{M_{1}}_{dx_{i}}u_{1}\right)  - \sum_{i = 1}^{k_{1}} \mathcal{D}^{M_{1}}_{\mathcal{D}^{M_{1}}_{dx_{i}} dx_{i} } u_{1}\right)^h  + \left( {f_{1}}^h\right)^2 \sum_{j = 1}^{k_{2}} \mathcal{D}_{\mathcal{D}_{ dy^v_{j}} \left(  dy^v_{j}\right)  } u_{1}^h\\
\nonumber &= \left( \Delta ^ {\mathcal{D}^{{M_{1}}}} u_{1}\right)^h + \frac{1}{{f_{1}}^h} \sum_{j = 1}^{k_{2}}g_{2} \left(dy_{j}, dy_{j} \right)^v g_{1}\left( J_{1}d{f_{1}}, J_{1}du_{1}\right)^h\\ 
&= \left( \Delta ^ {\mathcal{D}^{{M_{1}}}} u_{1}\right)^h + \frac{\kappa_{2} - 2 q_{2}}{{f_{1}}^h}\tilde{g}_{1}\left( J_{1} d{f_{1}}, du_{1}\right)^h,
\end{align}
 and similarly,
\begin{align}\label{8.3.19}
	\nonumber	\Delta ^ \mathcal{D} u_{2}^v &=  - \sum_{i = 1}^{k_{1}} \mathcal{D}^2_{dx^h_{i}, dx^h_{i}} u_{2}^v - \sum_{j = 1}^{k_{2}} \mathcal{D}^2_{{f_{1}}^h dy^v_{j}, {f_{1}}^h dy^v_{j}} u_{2}^v\\
	\nonumber &= \left(\nu_{1}^h{f_{1}}^h \right)^2 \left( - \sum_{j = 1}^{k_{2}} \mathcal{D}^{M_{2}}_{ dy^v_{j}}\left( \mathcal{D}^{M_{2}}_{ dy^v_{j}}u_{2}^v\right)  + \sum_{j = 1}^{k_{2}} \mathcal{D}^{M_{2}}_{\mathcal{D}^{M_{2}}_{dy^v_{j}} \left(  dy^v_{j}\right)  } u_{2}^v\right) \\
 &=  \left(\nu_{1}^h{f_{1}}^h \right)^2 \left( \Delta ^ {\mathcal{D}^{{M_{2}}}} u_{2}\right)^v.
\end{align} 
Thus equations (\ref{8.3.17}), (\ref{8.3.18}) and (\ref{8.3.19}) completes the proof.
\end{proof}    
\section{\textbf{Qualar curvatures and null sectional curvatures}}
Let $\left(M_{1}, \bar{g}_{1} \right)$ be a pseudo-Riemannian manifold of index $q_{1}$, and local $\bar{g}_{1}$ - orthonormal basis  $\lbrace \frac{\partial}{\partial x_{1}}, . . . \frac{\partial}{\partial x_{k_{1}}} \rbrace$, such that $\bar{g}_{1}\left( \frac{\partial}{\partial x_{i}}, \frac{\partial}{\partial x_{i}} \right) = - 1 $, for all $i \in \lbrace1,. . .,q_{1} \rbrace$ and $\bar{g}_{1}\left( \frac{\partial}{\partial x_{s}}, \frac{\partial}{\partial x_{s}} \right) =  1 $, for all $s \in \lbrace q_{1} + 1 ,. . .,k_{1} \rbrace$. Then the qualar curvature $(qual)$ of $\left(M_{1}, \bar{g}_{1} \right)$ is defined as a sum of scalar curvatures of some plane section at a point $p$, of $\left(M_{1}, \bar{g}_{1} \right)$, such that \cite{8mgl},
\begin{equation}\label{8.4.1}
	qual^1(p) = 2\sum_{i = 1}^{q_{1}}\sum_{s = q_{1} + 1}^{k_{1}} K^1(\frac{\partial}{\partial x_{i}}, \frac{\partial}{\partial x_{s}}).
\end{equation}

Whereas, the null sectional curvature $(\bar{K}^1 )$ of a degenerate plane spanned by null vector field $N^i_{s} = \frac{1}{\sqrt{2}} \left( \frac{\partial}{\partial x_{i}}, \frac{\partial}{\partial x_{s}}\right)$ and non null vector field $\frac{\partial}{\partial x_{l}}$, $l \neq i$, $l \neq s$, is given by  
\cite{8mgl, 8ahb},
	\begin{align}\label{8.4.2}
	\nonumber	\bar{K}^1 &=\frac{\bar{g}_{1}\left( R\left(N^i_{s}, \frac{\partial}{\partial x_{l}} \right)\frac{\partial}{\partial x_{l}}, N^i_{s}\right) }{\bar{g}_{1} \left(\frac{\partial}{\partial x_{l}}, \frac{\partial}{\partial x_{l}}\right)},\\
	 &= - \frac{1}{2}\bar{K}^1\left(\frac{\partial}{\partial x_{i}}, \frac{\partial}{\partial x_{l}} \right)  + \frac{1}{2}\bar{K}^1\left(\frac{\partial}{\partial x_{s}}, \frac{\partial}{\partial x_{l}} \right) + \epsilon_{l} \bar{g}_{1}\left( R\left(\frac{\partial}{\partial x_{i}}, \frac{\partial}{\partial x_{l}} \right)\frac{\partial}{\partial x_{l}}, \frac{\partial}{\partial x_{s}} \right).
	\end{align}
	
where $\bar{g}_{1}\left( \frac{\partial}{\partial x_{l}}, \frac{\partial}{\partial x_{l}}\right) = \epsilon_{l}$. Thus, we can 	generalize the concepts of qualar curvatures and null sectional curvatures from $\left(M_{1}, \bar{g}_{1} \right)$ to $\left(M_{1}, \tilde{g}_{1}, \Pi_{1} \right)$, where  $\tilde{g}_{1}$ is a cometric of $\bar{g}_{1}$. Therefore from  equations $(\ref{5.3.13})$ and $(\ref{8.4.1})$, we define  the qualar curvature $\left(\mathcal{Q}^1 \right) $ at a point   $p \in \left(M_{1}, \tilde{g}_{1}, \Pi_{1} \right)$, as follows
\begin{equation}\label{8.4.3}
	\mathcal{Q}^1(p) =  2\sum_{i = 1}^{q_{1}}\sum_{s = q_{1} + 1}^{k_{1}} \mathcal{K}^1(dx_{i}, dx_{s}).
\end{equation}

Also for smooth sections $\xi^i_{s} = \frac{1}{\sqrt{2}} \left( dx_{i} + dx_{s}\right), \bar{\xi^i_{s}} = \frac{1}{\sqrt{2}} \left( - dx_{i} + dx_{s}\right)$, $i \in \lbrace 1, . . . q_{1} \rbrace $, $s \in \lbrace q_{1} + 1, . . . k_{1} \rbrace $ in $T^*M_{1}$, we can see that
\begin{equation}\label{8.4.4}
	\bar{g}_{1}\left( \xi^i_{s}, \xi^i_{s}\right) = 0 = \bar{g}_{1}\left( \bar{\xi}^i_{s}, \bar{\xi}^i_{s}\right), \qquad and\qquad \bar{g}_{1}\left( \xi^i_{s}, \bar{\xi}^i_{s}\right) = 1 .
\end{equation}

Thus we define,   null sectional curvature $\left( \mathcal{N}^1\right) $ of  the plane spanned by null one form $\xi^i_{s}$ and a one form $dx_{l}$, $l \neq i$, $l \neq s$, on $M$, in the following way
\begin{align}\label{8.4.5}
		\mathcal{K}^1\left( \xi^i_{s}, dx_{l}\right)  &=\frac{\tilde{g}_{1}\left( \mathcal{R}\left(\xi^i_{s}, dx_{l} \right)dx_{l}, \xi^i_{s}\right) }{\tilde{g}_{1} \left(dx_{l}, dx_{l}\right)}.
\end{align}

Let $(M= M_{1}\times_{f_{1}} M_{2}, \Pi^{\nu_1},\tilde{g}^{f_{1}})$, be a contravariant  warped product space   with local $\tilde{g}^{f_1}$-  orthonormal basis  $\lbrace dx^h_{1}, . . . dx^h_{k_{1}}, f_{1}^h dy_{1}^v, . . .  f_{k_2}^h dy_{k_{2}}^v \rbrace$, where  $\lbrace dx_{1}, . . . dx_{k_{1}}\rbrace$ is $\tilde{g}_{1}$-  orthonormal basis on $M_{1}$ of index $q_{1}$ and  $\lbrace dy_{1}, . . .   dy_{k_{2}} \rbrace$ are  $\tilde{g}_{2}$-  orthonormal basis on $M_{2}$ of index $q_{2}$. Then qualar curvatures $\mathcal{Q}$ at $p = \left( p_{1}, p_{2}\right)  \in M$,  and null sectional curvatures $\left( \mathcal{K}\right)$ on $M$ are respectively defined by the equations
\begin{align}\label{8.4.6}
\nonumber	\mathcal{Q}(p) = & 2\sum_{i = 1}^{q_{1}}\sum_{s = q_{1} + 1}^{k_{1}} \mathcal{K}^M(dx_{i}^h, dx_{s}^h) + 2\sum_{i = 1}^{q_{1}}\sum_{s = q_{2} + 1}^{k_{2}} \mathcal{K}^M(dx_{i}, {f_{1}}^hdy_{s})\\
	& + 2\sum_{i = 1}^{q_{2}}\sum_{s = q_{1} + 1}^{k_{1}} \mathcal{K}^M({f_{1}}^hdy_{i}, dx_{s}) + 2\sum_{i = 1}^{q_{2}}\sum_{s = q_{2} + 1}^{k_{2}} \mathcal{K}^M({f_{1}}^hdy_{i}, {f_{1}}^hdy_{s}),
\end{align}
and
\begin{align}\label{8.4.7}
	\mathcal{K}\left( \xi^i_{s}, d\theta_{l}\right)  &=\frac{\tilde{g}^{f_1}\left( \mathcal{R}\left(\xi^i_{s}, d\theta_{l} \right)d\theta_{l}, \xi^i_{s}\right) }{\tilde{g}^{f_1} \left(d\theta_{l}, d\theta_{l}\right)},
\end{align}
where $\xi^i_{s} \in \lbrace\xi^{i,h}_{s,h}, \xi^{i,h}_{s,v},\xi^{i,v}_{s,v}\rbrace$,
$d\theta_{l} \in \lbrace dx^h_{1}, . . . dx^h_{k_{1}}, f^h dy_{1}^v, . . .  f^h dy_{k_{2}}^v \rbrace$, $i \neq l$, $i \neq s$ and
\begin{equation}\label{8.4.8}
	\begin{cases}
		\xi^{i,h}_{s,h} =  \frac{1}{\sqrt{2}} \left( dx_{i}^h + dx_{s}^h\right), \qquad \forall\; i \in \lbrace 1, . . . q_{1} \rbrace , s \in \lbrace q_{1} + 1, . . . k_{1} \rbrace,\\
		\xi^{i,h}_{s,v} =  \frac{1}{\sqrt{2}} \left( dx_{i}^h + {f_{1}}^hdy_{s}^v\right) \;\qquad \forall i \in \lbrace 1, . . . q_{1} \rbrace , s \in \lbrace q_{2} + 1, . . . k_{2} \rbrace,\\
		\xi^{i,v}_{s,h} =  \frac{1}{\sqrt{2}} \left({f_{1}}^hdy_{i}^v + dx_{s}^h \right) \;\qquad \forall i \in \lbrace 1, . . . q_{2} \rbrace , s \in \lbrace q_{1} + 1, . . . k_{1} \rbrace,\\
		\xi^{i,v}_{s,v} =  \frac{1}{\sqrt{2}} \left( {f_{1}}^hdy_{i}^v + {f_{1}}^hdy_{s}^v\right) \qquad \forall i \in \lbrace 1, . . . q_{2} \rbrace , s \in \lbrace q_{2} + 1, . . . k_{2} \rbrace,
	\end{cases}
\end{equation}
such that $\tilde{g}^{f_{1}}  \left(\xi^{i,h}_{s,h}, \xi^{i,h}_{s,h} \right)  =  \tilde{g}^{f_{1}}  \left(\xi^{i,h}_{s,v}, \xi^{i,h}_{s,v} \right) = \tilde{g}^{f_{1}}  \left(\xi^{i,v}_{s,v}, \xi^{i,v}_{s,v} \right) = 0$.

  \begin{proposition}
   Let  $(M= M_{1}\times_{f_{1}} M_{2}, \Pi^{\nu_1},\tilde{g}^{f_{1}})$, be a contravariant  warped product space. Then null sectional curvature of a degenerate plane on $M$ holds any one of the following relations 
 \begin{equation*}
(1).\; \;\mathcal{K}\left( \xi^{i,h}_{s,h}, \eta_{l}\right) =
	\begin{cases}
		- \frac{1}{2} \mathcal{K}^{M}\left(dx_{i}^h, dx_{l}^h  \right)  + \frac{1}{2} \mathcal{K}^{M}\left(dx_{s}^h, dx_{l}^h  \right) \\ 
		
		\hspace{2.4cm}+ \epsilon_{l_{1}} \tilde{g}^{f_{1}}\left(\mathcal{R}\left( dx_{i}^h, dx_{l}^h\right)dx_{l}^h, dx_{s}^h \right),   
		& if\; \eta_{l}  = dx_{l}^h, \\
		
	- \frac{1}{2} \mathcal{K}^{M}\left(dx_{i}^h, dy_{l}^v  \right)  + \frac{1}{2} \mathcal{K}^{M}\left(dx_{s}^h, dy_{l}^v  \right) \\ 
	
	\hspace{2.4cm}+ \epsilon_{l_{2}}(f_{1}^h)^2 \tilde{g}^{f_{1}} \left(\mathcal{R}\left( dx_{i}^h, dy_{l}^v\right)dy_{l}^v, dx_{s}^h \right),   
	& if\; \eta_{l}  = {f_{1}}^hdy_{l}^v, \\
	\end{cases}
\end{equation*}
where $\tilde{g}_{1}\left(dx_{l}, dx_{l} \right) = \epsilon_{l_{1}} = \pm 1$, $\tilde{g}_{2}\left(dy_{l}, dy_{l} \right) = \epsilon_{l_{2}} = \pm 1$,  $i \in \lbrace 1, . . . q_{1} \rbrace $, $s \in \lbrace q_{1} + 1, . . . k_{1} \rbrace $, $l \neq i$ and $l \neq s$.
 \begin{equation*}
	(2).\; \;\mathcal{K}\left( \xi^{i,h}_{s,v}, \eta_{l}\right) =
	\begin{cases}
		- \frac{1}{2} \mathcal{K}^{M}\left(dx_{i}^h, dx_{l}^h  \right)  + \frac{1}{2} \mathcal{K}^{M}\left(dy_{s}^v, dx_{l}^h  \right) \\ 
		\hspace{2.4cm}+ \epsilon_{l_{1}} (f_{1}^h) \tilde{g}^{f_{1}}\left(\mathcal{R}\left( dx_{i}^h, dx_{l}^h\right)dx_{l}^h, dy_{s}^v \right),   
		& if\; \eta_{l}  = dx_{l}^h, \\
		- \frac{1}{2} \mathcal{K}^{M}\left(dx_{i}^h, dy_{l}^v  \right)  + \frac{1}{2} \mathcal{K}^{M}\left(dy_{s}^v, dy_{l}^v  \right) \\ 
		\hspace{2.4cm}+ \epsilon_{l_{2}}(f_{1}^h)^3 \tilde{g}^{f_{1}}\left(\mathcal{R}\left( dx_{i}^h, dy_{l}^v\right)dy_{l}^v, dy_{s}^v \right),   
		& if\; \eta_{l}  = {f_{1}}^hdy_{l}^v, \\
	\end{cases}
\end{equation*}
where $\tilde{g}_{1}\left(dx_{l}, dx_{l} \right) = \epsilon_{l_{1}} = \pm 1$, $\tilde{g}_{2}\left(dy_{l}, dy_{l} \right) = \epsilon_{l_{2}} = \pm 1$,  $i \in \lbrace 1, . . . q_{1} \rbrace $, $s \in \lbrace q_{2} + 1, . . . k_{2} \rbrace $, $l \neq i$ and $l \neq s$.
\begin{equation*}
	(3).\; \;\mathcal{K}\left( \xi^{i,v}_{s,h}, \eta_{l}\right) =
	\begin{cases}
		 \frac{1}{2} \mathcal{K}^{M}\left(dx_{i}^h, dx_{l}^h  \right)  - \frac{1}{2} \mathcal{K}^{M}\left(dy_{s}^v, dx_{l}^h  \right) \\ 
		\hspace{2.4cm}+ \epsilon_{l_{1}} ({f_{1}}^h) \tilde{g}^{f_{1}}\left(\mathcal{R}\left( dx_{i}^h, dx_{l}^h\right)dx_{l}^h, dy_{s}^v \right),   
		& if\; \eta_{l}  = dx_{l}^h, \\
		\frac{1}{2} \mathcal{K}^{M}\left(dx_{i}^h, dy_{l}^v  \right)  - \frac{1}{2} \mathcal{K}^{M}\left(dy_{s}^v, dy_{l}^v  \right) \\ 
		\hspace{2.4cm}+ \epsilon_{l_{2}}({f_{1}}^h)^3 \tilde{g}^{f_{1}}\left(\mathcal{R}\left( dx_{i}^h, dy_{l}^v\right)dy_{l}^v, dy_{s}^v \right),   
		& if\; \eta_{l}  = {f_{1}}^hdy_{l}^v, \\
	\end{cases}
\end{equation*}
where $\tilde{g}_{1}\left(dx_{l}, dx_{l} \right) = \epsilon_{l_{1}} = \pm 1$, $\tilde{g}_{2}\left(dy_{l}, dy_{l} \right) = \epsilon_{l_{2}} = \pm 1$,  $i \in \lbrace 1, . . . q_{2} \rbrace $, $s \in \lbrace q_{1} + 1, . . . k_{1} \rbrace $, $l \neq i$ and $l \neq s$.
 \begin{equation*}
	(4).\; \;\mathcal{K }\left( \xi^{i,v}_{s,v}, \eta_{l}\right) =
	\begin{cases}
		- \frac{1}{2} \mathcal{K}^{M}\left(dy_{i}^v, dx_{l}^h  \right)  + \frac{1}{2} \mathcal{K}^{M}\left(dy_{s}^v, dx_{l}^h  \right) \\ 
		\hspace{2.4cm}+ \epsilon_{l_{1}}({f_{1}}^h)^2 \tilde{g}^{f_{1}}\left(\mathcal{R}\left( dy_{i}^v, dx_{l}^h\right)dx_{l}^h, dy_{s}^v \right),   
		& if\; \eta_{l}  = dx_{l}^h, \\
		- \frac{1}{2} \mathcal{K}^{M}\left(dy_{i}^v, dy_{l}^v  \right)  + \frac{1}{2} \mathcal{K}^{M}\left(dy_{s}^v, dy_{l}^v  \right) \\ 
		\hspace{2.4cm}+ \epsilon_{l_{2}}({f_{1}}^h)^4 \tilde{g}^{f_{1}}\left(\mathcal{R}\left( dy_{i}^v, dy_{l}^v\right)dy_{l}^v, dy_{s}^v \right),   
		& if\; \eta_{l}  = {f_{1}}^hdy_{l}^v, \\
	\end{cases}
\end{equation*}
where $\tilde{g}_{1}\left(dx_{l}, dx_{l} \right) = \epsilon_{l_{1}} = \pm 1$, $\tilde{g}_{2}\left(dy_{l}, dy_{l} \right) = \epsilon_{l_{2}} = \pm 1$  $i \in \lbrace 1, . . . q_{2} \rbrace $, $s \in \lbrace q_{2} + 1, . . . k_{2} \rbrace $, $l \neq i$ and $l \neq s$.
\end{proposition}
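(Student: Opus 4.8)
The four parts are all instances of one computation, carried out eight times (the four null one-forms of $(\ref{8.4.8})$, each paired with $\eta_l=dx_l^h$ or $\eta_l={f_1}^hdy_l^v$); it is exactly the argument behind the scalar identity $(\ref{8.4.2})$, now run on $(M,\tilde{g}^{f_1},\Pi^{\nu_1})$ relative to the $\tilde{g}^{f_1}$-orthonormal basis $(\ref{8.4.8})$ whose signature is recorded in $(\ref{5.3.12})$--$(\ref{5.3.14})$. Write the chosen null one-form as $\xi=\tfrac{1}{\sqrt2}(A+B)$, where $A$ is the basis covector in the index range with $\tilde{g}^{f_1}=-1$ and $B$ the one in the range with $\tilde{g}^{f_1}=+1$ (so that $\tilde{g}^{f_1}(\xi,\xi)=0$, as in $(\ref{8.4.8})$), and $\tilde{g}^{f_1}(\xi,\eta_l)=0$ since $l\neq i,s$. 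Substituting into the definition $(\ref{8.4.7})$ and expanding the numerator by trilinearity of $\mathcal R$ and bilinearity of $\tilde{g}^{f_1}$,
\[
\tilde{g}^{f_1}\big(\mathcal R(\xi,\eta_l)\eta_l,\xi\big)=\tfrac12\!\!\sum_{C,C'\in\{A,B\}}\!\!\tilde{g}^{f_1}\big(\mathcal R(C,\eta_l)\eta_l,C'\big),
\]
so the work reduces to the two ``diagonal'' terms ($C=C'$) and the two ``cross'' terms ($C\neq C'$), followed by division by $\tilde{g}^{f_1}(\eta_l,\eta_l)=\epsilon_l=\pm1$.

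For the diagonal terms, since $A,B,\eta_l$ are $\tilde{g}^{f_1}$-orthonormal the sectional-curvature formula $(\ref{8.2.2})$ gives $\tilde{g}^{f_1}(\mathcal R(A,\eta_l)\eta_l,A)=\tilde{g}^{f_1}(A,A)\,\tilde{g}^{f_1}(\eta_l,\eta_l)\,\mathcal K^M(A,\eta_l)$ and likewise for $B$, with $\tilde{g}^{f_1}(A,A)=-1$ and $\tilde{g}^{f_1}(B,B)=+1$. Dividing by $\tilde{g}^{f_1}(\eta_l,\eta_l)$ and using $\epsilon_l^{-1}=\epsilon_l$, these two contribute $-\tfrac12\mathcal K^M(A,\eta_l)+\tfrac12\mathcal K^M(B,\eta_l)$, i.e. the minus sign attaches to the timelike leg and the plus sign to the spacelike leg. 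Since $\mathcal K^M$ depends only on the spanned plane, $\mathcal K^M({f_1}^hdy_\bullet^v,\eta_l)=\mathcal K^M(dy_\bullet^v,\eta_l)$, which is why the warping factor does not appear inside the $\mathcal K^M$-terms of the statement.

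For the cross terms I would use the symmetry $\tilde{g}^{f_1}(\mathcal R(X,Z)Z,Y)=\tilde{g}^{f_1}(\mathcal R(Y,Z)Z,X)$ — valid because $\mathcal D$ is the torsion-free $\tilde{g}^{f_1}$-compatible contravariant Levi-Civita connection on $M$, so $\mathcal R$ carries the same algebraic symmetries (skew in the first two slots, skew in the last two relative to $\tilde{g}^{f_1}$, and the pair symmetry via the first Bianchi identity) as a pseudo-Riemannian curvature tensor. Thus the two cross terms coincide and, with the factor $\tfrac12$ in the display, collapse to $\tilde{g}^{f_1}(\mathcal R(A,\eta_l)\eta_l,B)$. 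As $\mathcal R$ is $C^\infty(M)$-linear in each argument, every fibre basis covector ${f_1}^hdy_\bullet^v$ occurring among the four slots releases one factor ${f_1}^h$; after dividing by $\tilde{g}^{f_1}(\eta_l,\eta_l)=\epsilon_l$ (again $\epsilon_l^{-1}=\epsilon_l$) the surviving term is $\epsilon_l\,({f_1}^h)^{k}\,\tilde{g}^{f_1}\big(\mathcal R(\,\cdot\,,\bar\eta_l)\bar\eta_l,\,\cdot\,\big)$ with $\bar\eta_l\in\{dx_l^h,dy_l^v\}$ and $k$ the number of fibre legs among the four entries; this gives $k=0,2$ in part $(1)$, $k=1,3$ in parts $(2)$ and $(3)$, and $k=2,4$ in part $(4)$, matching the displayed powers. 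Running the template through the four forms of $(\ref{8.4.8})$ and the two choices of $\eta_l$ then produces the four formulas verbatim.

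The argument has no analytic content; the only care required is the two pieces of bookkeeping isolated above — which of $A,B$ is timelike and which is spacelike (this fixes the signs of the two $\tfrac12\mathcal K^M$-terms) and how many fibre legs enter each curvature term (this fixes the power of ${f_1}^h$) — together with the one structural fact used above, namely that the contravariant curvature $\mathcal R$ of the Levi-Civita connection $\mathcal D$ enjoys the pair symmetry; that in turn follows, exactly as in the covariant theory, from torsion-freeness and $\tilde{g}^{f_1}$-compatibility of $\mathcal D$.
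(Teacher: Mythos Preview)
Your proposal is correct and follows essentially the same route as the paper: both proofs substitute $\xi=\tfrac{1}{\sqrt2}(A+B)$ into the definition $(\ref{8.4.7})$, expand by bilinearity into two diagonal terms and two cross terms, identify the diagonal pieces with $\pm\tfrac12\mathcal K^M$ via orthonormality, and merge the cross terms using the pair symmetry of $\mathcal R$ (the paper signals this merge by the factor $2$ in its middle term rather than stating the symmetry explicitly). Your write-up is in fact more careful than the paper's, which contains several index typos in its expansion and in part~(3) of the statement; your bookkeeping of the $({f_1}^h)^k$ powers and the timelike/spacelike signs is exactly what the argument needs.
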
   
\begin{proof}
	From $(\ref{8.4.7})$, for all $i \in \lbrace 1, . . . q_{1} \rbrace $, $s \in \lbrace q_{1} + 1, . . . k_{1} \rbrace $,  $l \neq i$ and $l \neq s$, we get the relations
	 \begin{align*}
	 	\mathcal{K}\left( \xi^{i,h}_{s,h}, dx^h_{l}\right)  &=\frac{\tilde{g}^{f_1}\left( \mathcal{R}\left(\xi^i_{s}, dx^h_{l} \right)dx^h_{l}, \xi^{i,h}_{s,h} \right) }{\tilde{g}^{{f_{1}}} \left(dx^h_{l}, dx^h_{l}\right)}
	 	=\frac{1}{2\tilde{g}^{f_{1}} \left(dx^h_{l}, dx^h_{l}\right)}\lbrace \tilde{g}^{f_{1}} \left(\mathcal{R}\left( dx_{i}^h, dx_{l}^h\right)dx_{l}^h, dx_{s}^v \right)\\
	 	 & \qquad + 2 \tilde{g}^{f_{1}} \left(\mathcal{R}\left( dx_{i}^h, dx_{l}^h\right)dx_{l}^h, dx_{s}^h \right) +  \tilde{g}^{f_{1}} \left(\mathcal{R}\left( dx_{i}^h, dx_{l}^h\right)dx_{l}^h, dx_{s}^h \right)  \rbrace \\
	 	 &= - \frac{1}{2} \mathcal{K}^{M}\left(dx_{i}^h, dx_{l}^h  \right)  + \frac{1}{2} \mathcal{K}^{M}\left(dx_{s}^h, dx_{l}^h  \right)  
	 	 + \epsilon_{l_{1}}   \tilde{g}^{f_{1}}  \left(\mathcal{R}\left( dx_{i}^h, dx_{l}^h\right)dx_{l}^h, dx_{s}^h \right), 
	 \end{align*} 
 and 
  \begin{align*}
 	\mathcal{K}\left( \xi^{i,h}_{s,h}, {f_{1}}^h dy^v_{l}\right)  &=\frac{\tilde{g}^{f_1}\left( \mathcal{R}\left(\xi^i_{s}, {f_{1}}^hdy^v_{l} \right){f_{1}}^h dy^v_{l}, \xi^{i,h}_{s,h} \right) }{\tilde{g}^{f_{1}} \left({f_{1}}^hdy^v_{l}, dy^v_{l}\right)}
 	=\frac{1}{2\tilde{g}^{f_{1}} \left(dy^v_{l}, dy^v_{l}\right)}\lbrace \tilde{g}^{f_{1}}  \left(\mathcal{R}\left( dx_{i}^h, dy_{l}^v\right)dy_{l}^v, dx_{s}^v \right)\\
 	& \qquad + 2 \tilde{g}^{f_{1}}  \left(\mathcal{R}\left( dx_{i}^h, dy_{l}^v\right)dy_{l}^h, dx_{s}^h \right) +  \tilde{g}^{f_{1}}  \left(\mathcal{R}\left( dx_{i}^h, dy_{l}^v\right)dy_{l}^v, dx_{s}^h \right)  \rbrace \\
 	&= - \frac{1}{2} \mathcal{K}^{M}\left(dx_{i}^h, dy_{l}^v  \right)  + \frac{1}{2} \mathcal{K}^{M}\left(dx_{s}^h, dy_{l}^v  \right)  
 	+ \epsilon_{l_{2}} ({f_{1}}^h)^2 \tilde{g}^{f_{1}} \left(\mathcal{R}\left( dx_{i}^h, dx_{l}^h\right)dx_{l}^h, dx_{s}^h \right). 
 \end{align*} 
which together prove the first part of the proposition. Similarly, we can prove the remaining parts of the proposition.
\end{proof} 
If $\xi^{i,h}_{s,h} =  \frac{1}{\sqrt{2}} \left( dx_{i}^h + dx_{s}^h\right)$, then we can consider $\bar{\xi}^{i,h}_{s,h} =  \frac{1}{\sqrt{2}} \left( - dx_{i} + dx_{s}\right),$ such that $\tilde{g}^{f_{1}} \left( \bar{\xi}^{i,h}_{s,h}, \bar{\xi}^{i,h}_{s,h} \right) = 0 = \tilde{g}^{f_{1}} \left( \xi^{i,h}_{s,h}, \xi^{i,h}_{s,h} \right) $ and $ \tilde{g}^{f_{1}} \left( \xi^{i,h}_{s,h}, \bar{\xi}^{i,h}_{s,h}\right) = 1  $ for all  $i \in \lbrace 1, . . . q_{1} \rbrace , s \in \lbrace q_{1} + 1, . . . k_{1} \rbrace $. Similarly for remaining $\xi^{i}_{s}$, we can find their counter fields $\bar{\xi}^{i}_{s}$ such that  $ g^{f_{1}} \left( \xi^{i}_{s}, \xi^{i}_{s} \right) = 0 = \tilde{g}^{f_{1}} \left( \bar{\xi}^{i}_{s}, \bar{\xi}^{i}_{s} \right) $ and $\tilde{g}^{f_{1}} \left( \xi^{i}_{s}, \bar{\xi}^{i}_{s} \right) = 1$. Now, using equalities
\begin{equation}\label{8.4.9}
	\begin{cases}
			\mathcal{K}^{M}(dx_{i}^h, dx_{s}^h) = \mathcal{K}^{M}(\xi^{i,h}_{s,h}, \bar{\xi}^{i,h}_{s,h}),\qquad \forall\; i \in \lbrace 1, . . . q_{1} \rbrace , s \in \lbrace q_{1} + 1, . . . k_{1} \rbrace,\\
			\mathcal{K}^{M}(dx_{i}^h, f_{1}^hdy_{s}^v) = \mathcal{K}^{M}(\xi^{i,h}_{s,v}, \bar{\xi}^{i,h}_{s,v}),\qquad \forall\; i \in \lbrace 1, . . . q_{1} \rbrace , s \in \lbrace q_{2} + 1, . . . k_{2} \rbrace,\\
			\mathcal{K}^{M}(dx_{s}^h, f_{1}^hdy_{i}^v) = \mathcal{K}^{M}(\xi^{i,v}_{s,h}, \bar{\xi}^{i,v}_{s,h}),\qquad \forall\; i \in \lbrace 1, . . . q_{2} \rbrace , s \in \lbrace q_{1} + 1, . . . k_{1} \rbrace,\\
			\mathcal{K}^{M}(f_{1}^hdy_{i}^v, f_{1}^hdy_{s}^v) = \mathcal{K}^{M}(\xi^{i,v}_{s,v}, \bar{\xi}^{i,v}_{s,v}), \qquad \forall i \in \lbrace 1, . . . q_{2} \rbrace , s \in \lbrace q_{2} + 1, . . . k_{2} \rbrace,
	\end{cases}
\end{equation}
  in equation (\ref{8.4.6}), we obtain the following proposition.
\begin{proposition}
	Let  $(M= M_{1}\times_{f_{1}} M_{2}, \Pi^{\nu_1},\tilde{g}^{f_{1}})$, be a contravariant  warped product space. Then  qualar  curvature is  the sum of some sectional curvatures of the planes spanned by null $1 -$ forms $\xi^{i}_{s}, \bar{\xi}^{i}_{s} \in \Gamma(T^*M)$ such that $g^{f_{1}} \left( \xi^{i}_{s}, \bar{\xi}^{i}_{s} \right) = 1$, and holds the relation 
	\begin{align}\label{8.4.10}
		\nonumber	\mathcal{Q}(p) = & 2\sum_{i = 1}^{q_{1}}\sum_{s = q_{1} + 1}^{k_{1}} \mathcal{K}^{M}(\xi^{i,h}_{s,h}, \bar{\xi}^{i,h}_{s,h}) + 2\sum_{i = 1}^{q_{1}}\sum_{s = q_{2} + 1}^{k_{2}}\mathcal{K}^{M}(\xi^{i,h}_{s,v}, \bar{\xi}^{i,h}_{s,v})\\
		& + 2\sum_{i = 1}^{q_{2}}\sum_{s = q_{1} + 1}^{k_{1}}\mathcal{K}^{M}(\xi^{i,v}_{s,h}, \bar{\xi}^{i,v}_{s,h}) + 2\sum_{i = 1}^{q_{2}}\sum_{s = q_{2} + 1}^{k_{2}} \mathcal{K}^{M}(\xi^{i,v}_{s,v}, \bar{\xi}^{i,v}_{s,v}).
	\end{align}  
\end{proposition}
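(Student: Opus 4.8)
The plan is to deduce (\ref{8.4.10}) from the expansion (\ref{8.4.6}) of $\mathcal{Q}(p)$ that is already in hand, together with the identities (\ref{8.4.9}); the only substantive point is the invariance of sectional curvature under a change of basis of a non-degenerate $2$-plane, applied to the four kinds of pairs of $\tilde g^{f_1}$-orthonormal cobasis $1$-forms in which one member is timelike and the other spacelike.

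First I would record, from (\ref{5.3.14}) and the definitions (\ref{8.4.8}), the algebraic facts stated just after (\ref{8.4.8}): when $i$ lies in the timelike index range and $s$ in the spacelike one, the corresponding $\tilde g^{f_1}$-orthonormal cobasis elements have inner products $-1$ and $+1$ on the diagonal and $0$ off it, so that each $\xi^{i}_{s}$ and each $\bar\xi^{i}_{s}$ is null while $\tilde g^{f_1}(\xi^{i}_{s},\bar\xi^{i}_{s})=1$. In particular the Gram determinant $\tilde g^{f_1}(\xi^{i}_{s},\xi^{i}_{s})\,\tilde g^{f_1}(\bar\xi^{i}_{s},\bar\xi^{i}_{s})-\tilde g^{f_1}(\xi^{i}_{s},\bar\xi^{i}_{s})^{2}$ equals $-1\neq 0$, so the plane $\mathrm{span}\{\xi^{i}_{s},\bar\xi^{i}_{s}\}$ is non-degenerate and $\mathcal{K}^{M}(\xi^{i}_{s},\bar\xi^{i}_{s})$ is well defined even though both spanning $1$-forms are null.

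Next I would establish (\ref{8.4.9}). Take, for instance, $i\in\{1,\dots,q_1\}$ and $s\in\{q_1+1,\dots,k_1\}$, and put $\omega=dx_i^h$, $\eta=dx_s^h$, so that $\xi^{i,h}_{s,h}=\tfrac1{\sqrt2}(\omega+\eta)$ and $\bar\xi^{i,h}_{s,h}=\tfrac1{\sqrt2}(-\omega+\eta)$ span the same $2$-plane as $\{\omega,\eta\}$. Expanding by bilinearity of $\mathcal{R}$ and using the skew-symmetries $\mathcal{R}(\alpha,\beta)=-\mathcal{R}(\beta,\alpha)$ and $\tilde g^{f_1}(\mathcal{R}(\alpha,\beta)\gamma,\delta)=-\tilde g^{f_1}(\mathcal{R}(\alpha,\beta)\delta,\gamma)$, a short computation collapses $\tilde g^{f_1}(\mathcal{R}(\xi^{i,h}_{s,h},\bar\xi^{i,h}_{s,h})\bar\xi^{i,h}_{s,h},\xi^{i,h}_{s,h})$ to $\tilde g^{f_1}(\mathcal{R}(\omega,\eta)\eta,\omega)$, while both Gram determinants equal $-1$ (for $\{\omega,\eta\}$ it is $(-1)(1)-0^{2}$); hence $\mathcal{K}^{M}(dx_i^h,dx_s^h)=\mathcal{K}^{M}(\xi^{i,h}_{s,h},\bar\xi^{i,h}_{s,h})$. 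The remaining three lines of (\ref{8.4.9}) follow in the same way, with $\{\omega,\eta\}$ replaced successively by $\{dx_i^h,f_{1}^{h}dy_s^v\}$, $\{f_{1}^{h}dy_i^v,dx_s^h\}$ and $\{f_{1}^{h}dy_i^v,f_{1}^{h}dy_s^v\}$ and the corresponding entries of (\ref{5.3.14}). Substituting these four identities term by term into the four double sums of (\ref{8.4.6}) reproduces (\ref{8.4.10}) exactly, the $1$-forms there being the advertised null forms with $\tilde g^{f_1}(\xi^{i}_{s},\bar\xi^{i}_{s})=1$. No step in this is a genuine obstacle; the one place that calls for care is the basis-change computation, precisely because $\xi^{i}_{s}$ and $\bar\xi^{i}_{s}$ are themselves null, so one must check that the sectional-curvature quotient remains well defined and unchanged — which is exactly what the value $-1$ of the Gram determinant guarantees.
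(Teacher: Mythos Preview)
Your proposal is correct and follows the same route as the paper: the paper simply asserts the identities (\ref{8.4.9}) and then substitutes them into (\ref{8.4.6}) to obtain (\ref{8.4.10}). You have in fact supplied more than the paper does, by explicitly verifying (\ref{8.4.9}) via the basis-change invariance of sectional curvature and checking that the Gram determinant equals $-1$ in each case; the paper leaves these verifications implicit.
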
    
\begin{theorem}\label{x123}
		Let  $(M= M_{1}\times_{f_{1}} M_{2}, \Pi^{\nu_1},\tilde{g}^{f_{1}})$, be a contravariant  warped Product space. Then qualar curvature by using sectional curvatures of $M_{1}$ and $M_{2}$ is given by
		\begin{align}\label{8.4.11}
			\nonumber	\mathcal{Q}(p) = & 2\sum_{i = 1}^{q_{1}}\sum_{s = q_{1} + 1}^{k_{1}} \mathcal{K}^{M_{1}}(dx_{i}, dx_{s}) + 2(f_{1}^2 \nu_{1}^2)^h 	\sum_{i = 1}^{q_{2}}\sum_{s = q_{2} + 1}^{k_{2}} \mathcal{K}^{M_{2}}(dy_{i}, dy_{s})^v  \\
		\nonumber	&+ \left( \frac{3 f_{1}^4 ||d\nu_{1}||_{1}^2 }{2}\right) ^h \sum_{i = 1}^{q_{2}}\sum_{s = q_{2} + 1}^{k_{2}}|\tilde{g}_{2}\left(J_{2}dy_{i}, dy_{s} \right)^v|^2 - \frac{2 q_{2}}{f_{1}^h} \left( \Delta ^ {\mathcal{D}_{1}}(f_{1})\right)^h  \\
	\nonumber		&+  \frac{2 k_{2}}{f_{1}^h}\sum_{i = 1}^{q_{1}} H_{1}^{f_{1}}(dx_{i}, dx_{i})
	- \frac{4 q_{2}}{(f_{1}^h)^2} |\left( ||J_{1}df_{1}||_{1}^2\right) |^2 + \frac{4 k_{2}}{(f_{1}^h)^2} \sum_{i = 1}^{q_{1}}|\tilde{g}_{1}\left( J_{1}df_{1}, dx_{i}\right) |^2\\
	\nonumber		&- \sum_{i = 1}^{q_{1}}\sum_{s = q_{2} + 1}^{k_{2}} \frac{(f_{1}^h)^4}{2} |\tilde{g}_{1}\left( d\nu_{1}, dx_{i}\right)^h |^2 \left( || J_{2}dy_{s}, J_{2}dy_{s}||_{2}^2 \right)^v - 2 \sum_{i = 1}^{q_{2}}\sum_{s = q_{2} + 1}^{k_{2}}\frac{\left(\||J_{1}df_{1}||_{1}^2 \right)^h}{(f_{1}^h)^2} \\
			 &- \sum_{s = 1}^{q_{2}}\sum_{i = q_{1} + 1}^{k_{1}} \frac{(f_{1}^h)^4}{2} |\tilde{g}_{1}\left( d\nu_{1}, dx_{i}\right)^h |^2 \left( || J_{2}dy_{s}, J_{2}dy_{s}||_{2}^2 \right)^v. 
	\end{align}  
\end{theorem}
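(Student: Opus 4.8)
The plan is to feed each of the four families of sectional curvatures appearing in the definition \eqref{8.4.6} of $\mathcal{Q}(p)$ into the three explicit formulas of Lemma~\ref{lem3.1}, and then to reorganize the resulting finite sums by means of the pseudo-orthonormality of the bases $\{dx_{1},\dots,dx_{k_{1}}\}$ and $\{dy_{1},\dots,dy_{k_{2}}\}$.

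First I would use that the sectional curvature of a plane is unchanged when either spanning one form is rescaled, so that $\mathcal{K}^{M}(dx_{i}^{h},{f_{1}}^{h}dy_{s}^{v})=\mathcal{K}^{M}(dx_{i}^{h},dy_{s}^{v})$ and $\mathcal{K}^{M}({f_{1}}^{h}dy_{i}^{v},{f_{1}}^{h}dy_{s}^{v})=\mathcal{K}^{M}(dy_{i}^{v},dy_{s}^{v})$; then every entry of \eqref{8.4.6} is of one of the three types treated in Lemma~\ref{lem3.1}, with $\omega_{1}=dx_{i}$, $\eta_{1}=dx_{s}$, $\omega_{2}=dy_{i}$, $\eta_{2}=dy_{s}$. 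Because $\{dx_{i}\}$ is $\tilde{g}_{1}$-orthonormal and $\{dy_{j}\}$ is $\tilde{g}_{2}$-orthonormal, every denominator occurring in Lemma~\ref{lem3.1} reduces to $\pm 1$: we have $\tilde{g}_{1}(dx_{i},dx_{i})=\epsilon_{i_{1}}$, $\tilde{g}_{2}(dy_{j},dy_{j})=\epsilon_{j_{2}}$ with $\epsilon=-1$ on the timelike indices and $\epsilon=+1$ on the spacelike ones, and $\tilde{g}_{1}(dx_{a},dx_{b})=\tilde{g}_{2}(dy_{a},dy_{b})=0$ for $a\neq b$. In particular the mixed summand $\tilde{g}_{2}(\omega_{2},\eta_{2})^{v}\tilde{g}_{2}(J_{2}\omega_{2},\eta_{2})^{v}$ in the third formula of Lemma~\ref{lem3.1} drops out because $\tilde{g}_{2}(dy_{i},dy_{s})^{v}=0$. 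With these reductions the first double sum of \eqref{8.4.6} becomes $2\sum_{i=1}^{q_{1}}\sum_{s=q_{1}+1}^{k_{1}}\mathcal{K}^{M_{1}}(dx_{i},dx_{s})$, while the fourth double sum becomes $2(f_{1}^{2}\nu_{1}^{2})^{h}\sum_{i=1}^{q_{2}}\sum_{s=q_{2}+1}^{k_{2}}\mathcal{K}^{M_{2}}(dy_{i},dy_{s})^{v}$ together with the terms $-2\sum_{i=1}^{q_{2}}\sum_{s=q_{2}+1}^{k_{2}}\tfrac{(||J_{1}df_{1}||_{1}^{2})^{h}}{(f_{1}^{h})^{2}}$ and $\bigl(\tfrac{3f_{1}^{4}||d\nu_{1}||_{1}^{2}}{2}\bigr)^{h}\sum_{i=1}^{q_{2}}\sum_{s=q_{2}+1}^{k_{2}}|\tilde{g}_{2}(J_{2}dy_{i},dy_{s})^{v}|^{2}$ coming from the surviving summands of the third formula.

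The core of the argument is the second and third double sums of \eqref{8.4.6}, which I would handle together using the second formula of Lemma~\ref{lem3.1}. Here I would rewrite $\tilde{g}_{1}(\mathcal{D}^{M_{1}}_{\omega_{1}}J_{1}df_{1},\omega_{1})$ in terms of the contravariant Hessian $H_{1}^{f_{1}}(dx_{i},dx_{i})$ via the identity $H_{\Pi_{1}}^{f_{1}}(\omega_{1},\eta_{1})=-\tilde{g}_{1}(\mathcal{D}^{M_{1}}_{\omega_{1}}J_{1}df_{1},\eta_{1})$ (the contravariant Hessian proposition of \cite{8yar}), carrying the surviving signs $\epsilon_{i_{1}},\epsilon_{s_{2}}$ from the denominators along. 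In the ``base--fiber'' double sum the $M_{1}$-index runs over $i\in\{1,\dots,q_{1}\}$ and is multiplied by the $k_{2}-q_{2}$ spacelike fiber indices, whereas in the ``fiber--base'' double sum the $M_{1}$-index runs over $s\in\{q_{1}+1,\dots,k_{1}\}$ and is multiplied by the $q_{2}$ timelike fiber indices. To merge the two I would use the orthonormal-basis identities
\[ \sum_{s=q_{1}+1}^{k_{1}}H_{1}^{f_{1}}(dx_{s},dx_{s})=-\bigl(\Delta^{\mathcal{D}^{M_{1}}}f_{1}\bigr)-\sum_{i=1}^{q_{1}}H_{1}^{f_{1}}(dx_{i},dx_{i}), \]
\[ \sum_{s=q_{1}+1}^{k_{1}}|\tilde{g}_{1}(J_{1}df_{1},dx_{s})|^{2}=||J_{1}df_{1}||_{1}^{2}+\sum_{i=1}^{q_{1}}|\tilde{g}_{1}(J_{1}df_{1},dx_{i})|^{2}, \]
the first of which is exactly the orthonormal-basis expansion of $\Delta^{\mathcal{D}^{M_{1}}}$ used in Section~3. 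The part of the ``fiber--base'' sum carried by $s>q_{1}$ then converts into a multiple of $(\Delta^{\mathcal{D}_{1}}f_{1})^{h}$ and a multiple of $(||J_{1}df_{1}||_{1}^{2})^{h}$, and the residual partial sums over $i\le q_{1}$ combine with the ``base--fiber'' contribution to furnish the remaining $H_{1}^{f_{1}}$-trace and $|\tilde{g}_{1}(J_{1}df_{1},dx_{i})|^{2}$ terms; the surviving signs $\epsilon_{i_{1}},\epsilon_{s_{2}}$ then pin down all the coefficients. Finally, the $d\nu_{1}$-summand of the second formula of Lemma~\ref{lem3.1} contributes the two terms carrying the factor $\tfrac{(f_{1}^{h})^{4}}{2}$, one from each mixed double sum, with the timelike/spacelike roles of $i$ and $s$ interchanged. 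Collecting the four groups of contributions yields \eqref{8.4.11}.

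I expect the only genuine obstacle to be the signature bookkeeping in the two mixed double sums: one has to keep the timelike-versus-spacelike split straight on \emph{both} the base and the fiber factor, carry the surviving signs $\epsilon_{i_{1}},\epsilon_{s_{2}}$ faithfully through the denominators of Lemma~\ref{lem3.1}, and recombine the partial sums over the timelike base indices with the full traces so that precisely $\Delta^{\mathcal{D}_{1}}f_{1}$ and $||J_{1}df_{1}||_{1}^{2}$ (rather than mismatched partial expressions) emerge. Once that is done, the remainder is a mechanical substitution and collection of like terms.
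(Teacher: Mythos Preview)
Your proposal is correct and follows essentially the same route as the paper's own proof: specialize Lemma~\ref{lem3.1} to the orthonormal basis vectors, feed the resulting expressions into the defining sum \eqref{8.4.6}, and then rewrite the Hessian-type terms via $\Delta^{\mathcal{D}_{1}}(f_{1})=\sum_{i}H_{1}^{f_{1}}(dx_{i},dx_{i})$. The paper's argument is terser---it simply records the four signed specializations of Lemma~\ref{lem3.1} as \eqref{8.4.12} and substitutes---whereas you spell out the $\epsilon$-bookkeeping and the recombination of the spacelike partial sums into $\Delta^{\mathcal{D}_{1}}f_{1}$ and $\lVert J_{1}df_{1}\rVert_{1}^{2}$ explicitly, but there is no substantive difference in method.
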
  
\begin{proof}
Let  $(M= M_{1}\times_{f_{1}} M_{2}, \Pi^{\nu_1},\tilde{g}^{f_{1}})$, be a contravariant  warped Poisson space with local $\tilde{g}^{f_{1}}$-  orthonormal basis  $\lbrace dx^h_{1}, . . . dx^h_{k_{1}}, f_{1}^h dy_{1}^v, . . .  f_{k_1}^h dy_{k_{2}}^v \rbrace$, such that  $\lbrace dx_{1}, . . . dx_{k_{1}}\rbrace$ are $\tilde{g}_{1}$-  orthonormal basis in $M_{1}$ of index $q_{1}$ and  $\lbrace dy_{1}, . . .   dy_{k_{2}} \rbrace$ are  $\tilde{g}_{2}$-orthonormal basis in $M_{2}$ of index $q_{2}$. Then from Lemma \ref{lem3.1}, we have  
\begin{equation}\label{8.4.12}
\begin{cases}
{K}^M\left(dx_{i}^h,dx_{s}^h \right)  =  \mathcal{K}^{M_{1}}\left( dx_{i},dx_{s}\right) ^h, \; \forall\; i \in \lbrace 1, . . . q_{1} \rbrace , s \in \lbrace q_{1} + 1, . . . k_{1} \rbrace, \\

 \mathcal{K}^M\left(dx_{i}^h,f_{1}^h dy_{s}^v \right) =  \left(\frac{{\tilde{g}_{1}}\left( {\mathcal{D}^{M_1}_{dx_{i}}J_{1} df_{1}}, dx_{i} \right) }{f_{1} } \right)^h  + \frac{2}{(f_{1}^h)^2}  |{\tilde{g}_{1}}\left(J_{1}df_{1}, dx_{i} \right)^h |^2\\
 
 \qquad\;- \frac{1 }{4} \tilde{g}_{2}\left( J_{2}dy_{s}, J_{2}dy_{s}\right)^v |\left( f_{1}^2 \tilde{g}_{1}\left( d\nu_{1}, dx_{i}\right) \right)^h|^2 , \qquad \forall\; i \in \lbrace 1, . . . q_{1} \rbrace , s \in \lbrace q_{2} + 1, . . . k_{2} \rbrace,\\
 
 \mathcal{K}^M\left(dx_{i}^h,f_{1}^h dy_{s}^v \right) =  - \left(\frac{\tilde{g}_{1}\left( {\mathcal{D}^{M_1}_{dx_{i}}J_{1} df_{1}}, dx_{i} \right) }{f_{1} } \right)^h  - \frac{2}{(f_{1}^h)^2}  |\tilde{g}_{1}\left(J_{1}df_{1}, dx_{i} \right)^h |^2\\
 
 \qquad\;- \frac{1 }{4} \tilde{g}_{2}\left( J_{2}dy_{s}, J_{2}dy_{s}\right)^v |\left( f_{1}^2 \tilde{g}_{1}\left( d\nu_{1}, dx_{i}\right) \right)^h|^2 , \qquad \forall\; i \in \lbrace q_{1} + 1, . . . k_{1} \rbrace , s \in \lbrace  1, . . . q_{2} \rbrace,\\
 
 \mathcal{K}^M\left(dy_{i}^v,dy_{s}^v \right) = (\nu_{1}^h)^2 (f_{1}^h)^2  \mathcal{K}^{M_{2}}\left(dy_{i},dy_{s}\right)^v - \frac{\left(\||J_{1}df_{1}||_{1}^2 \right)^h}{(f_{1}^h)^2} \\
 
\qquad\qquad + \frac{3}{4}\left( f_{1}^4 ||d\nu_{1}||_{1}^2\right)^h |\tilde{g}_{2}\left( J_{2} dy_{i} , dy_{s}\right)^v|^2 \qquad \forall\; i \in \lbrace 1, . . . q_{2} \rbrace , s \in \lbrace  q_{2} + 1, . . . k_{2} \rbrace.
\end{cases}
\end{equation}
Thus using equation (\ref{8.4.12}), in equation (\ref{8.4.6}) and 
 $$\Delta ^ {\mathcal{D}_{1}}(f_{1}) = \sum_{i = 1}^{k_{1}} H_{1}^{f_{1}}(dx_{i}, dx_{i}) = \sum_{i = 1}^{k_{1}} g_{1}(\mathcal   {D}_{dx_{i}}^{M_1} J_{1}df_{1}, dx_{i}),$$
we obtain the required solution.
\end{proof}  
\begin{corollary}
Let $(M= M_{1}\times_{f_{1}} M_{2}, \Pi^{\nu_1},\tilde{g}^{f_{1}})$, be a contravariant   warped product space such that  $\nu_1 = constant$ and $M_{2}$ be a Riemannian manifold. Then qualar curvature on $M$ is independent of $M_{2}$. 
\end{corollary}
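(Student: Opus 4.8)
The plan is to specialize the qualar-curvature identity (\ref{8.4.11}) of Theorem~\ref{x123} under the two stated hypotheses, so no new computation is needed beyond bookkeeping. Two simplifications apply simultaneously. First, since $\nu_1$ is constant on $M_1$ we have $d\nu_1=0$, so every term of (\ref{8.4.11}) carrying a factor $||d\nu_1||_1^2$ or $\tilde g_1(d\nu_1,dx_i)$ drops out. Second, a Riemannian manifold has index $q_2=0$, so every sum in (\ref{8.4.11}) that runs over $i\in\{1,\dots,q_2\}$, and every term with coefficient $-2q_2/f_1^h$ or $-4q_2/(f_1^h)^2$, is empty and hence zero.

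Carrying out the bookkeeping, setting $q_2=0$ removes the double sum $2(f_1^2\nu_1^2)^h\sum_{i=1}^{q_2}\sum_{s=q_2+1}^{k_2}\mathcal K^{M_2}(dy_i,dy_s)^v$, the $\tilde g_2(J_2dy_i,dy_s)$ double sum, the single terms $-\tfrac{2q_2}{f_1^h}(\Delta^{\mathcal D_1}f_1)^h$ and $-\tfrac{4q_2}{(f_1^h)^2}|(||J_1df_1||_1^2)|^2$, and the sum $-2\sum_{i=1}^{q_2}\sum_{s=q_2+1}^{k_2}(\cdots)$; imposing $d\nu_1=0$ kills the remaining two $(f_1^h)^4|\tilde g_1(d\nu_1,dx_i)|^2$ double sums. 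What survives is
\[
\mathcal Q(p)=2\sum_{i=1}^{q_1}\sum_{s=q_1+1}^{k_1}\mathcal K^{M_1}(dx_i,dx_s)+\frac{2k_2}{f_1^h}\sum_{i=1}^{q_1}H_1^{f_1}(dx_i,dx_i)+\frac{4k_2}{(f_1^h)^2}\sum_{i=1}^{q_1}\bigl|\tilde g_1(J_1df_1,dx_i)\bigr|^2 .
\]
Every summand on the right is built solely from $f_1$, the cometric $\tilde g_1$, the endomorphism $J_1$ and the contravariant Levi-Civita connection $\mathcal D^{M_1}$ of $M_1$ (the last through the Hessian $H_1^{f_1}$), so $\mathcal Q$ involves no curvature, metric or Poisson data of $M_2$; this is the asserted independence. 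Equivalently one may insert Lemma~\ref{lem3.1} with $d\nu_1=0$ directly into the definition (\ref{8.4.6}) and use $q_2=0$ to collapse the last two double sums there.

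The one point meriting a moment's attention — the only real, if minor, obstacle — is the $\mathcal K^{M_2}$ term: its coefficient $2(f_1^2\nu_1^2)^h$ is nonzero for a nonzero constant $\nu_1$, so this term disappears not from $d\nu_1=0$ but purely because its index range $\sum_{i=1}^{q_2}\sum_{s=q_2+1}^{k_2}$ is empty once $q_2=0$. It is also worth noting that $k_2=\dim M_2$ still enters the surviving coefficients $2k_2/f_1^h$ and $4k_2/(f_1^h)^2$, so "independent of $M_2$" is to be understood as independence from the geometry (curvature, metric, bivector) of $M_2$, not from its dimension.
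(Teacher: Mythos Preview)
Your proof is correct and follows essentially the same route as the paper: specialize formula (\ref{8.4.11}) of Theorem~\ref{x123} using $d\nu_1=0$ and $q_2=0$, obtaining the reduced expression (\ref{8.4.13}). Your additional remarks---that the $\mathcal K^{M_2}$ term vanishes through the empty index range rather than through $d\nu_1=0$, and that the dimension $k_2$ still appears in the surviving coefficients---are accurate clarifications not made explicit in the paper.
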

\begin{proof}
Taking $\nu_1 = constant$ and $M_{2}$ be a Riemannian manifold in Theorem \ref{x123}, the qualar curvature reduced to the relation
\begin{align}\label{8.4.13}
			\nonumber	\mathcal{Q}(p) =  2\sum_{i = 1}^{q_{1}}\sum_{s = q_{1} + 1}^{k_{1}} \mathcal{K}^{M_{1}}(dx_{i}, dx_{s}) 	
			\nonumber	&+  \frac{2 k_{2}}{f_{1}^h}\sum_{i = 1}^{q_{1}} H_{1}^{f_{1}}(dx_{i}, dx_{i})\\
	 &+ \frac{4 k_{2}}{(f_{1}^h)^2} \sum_{i = 1}^{q_{1}}|g_{1}\left( J_{1}df_{1}, dx_{i}\right) |^2.
	\end{align} 
Hence the proof. 
\end{proof}
\section{Examples} In this section, we find the sectional curvature for $H_{1}^2$, $E_{2}^2$ and $S_{0}^2$ by using Poisson tensor.  Also, the qualar curvatures for $(M= H_{1}^2\times_{f_{1}} E_{2}^2, \Pi^{\nu_1},\tilde{g}^{f_{1}})$ and $(M= H_{1}^2\times_{f_{1}} S_{0}^2, \Pi^{\nu_1},\tilde{g}^{f_{1}})$, are calculated.
\begin{example}
Let $(M= H_{1}^2\times_{f_{1}} E_{2}^2, \Pi^{\nu_1},\tilde{g}^{f_{1}})$, be a contravariant product  warped  space. Then for warped metric
$\bar{g}^{f_{1}} = -dx_{1}^2 + dx_{2}^2 + (f_{1}^h)^2 \left(-  d \bar{x}_{1}^2 -  d\bar{x}_{2}^2 \right)$,
  the local components of $\bar{g}^{f_{1}}$, are
  	\begin{eqnarray}
		\left\{
		\begin{array}{ll}
			\bar{g}_{11}=\bar{g}(\partial_1^h,\partial_1^h)=\bar{g}_{M_{1}}(\partial_1,\partial_1)^h=-1,\\
			
			\bar{g}_{22}=\bar{g}(\partial_2^h,\partial_2^h)=\bar{g}_{M_{1}}(\partial_2,\partial_2)^h=1,\\
			
			\bar{g}_{12}=\bar{g}(\partial_1^h,\partial_2^h)=0,\\
			
			\bar{g}_{1\bar{1}}=\bar{g}(\partial_1^h,\partial_{\bar{1}}^v)=0,\:\bar{g}_{2{\bar{1}}}=\bar{g}(\partial_2^h,\partial_{\bar{1}}^v)=0,\\
			
			\bar{g}_{1{\bar{2}}}=\bar{g}(\partial_1^h,\partial_{\bar{2}}^v)=0,\:\bar{g}_{2{\bar{2}}}=\bar{g}(\partial_2^h,\partial_{\bar{2}}^v)=0,\\
			
			\bar{g}_{{\bar{1}} {\bar{1}}}=\bar{g}(\partial_{\bar{1}}^v,\partial_{\bar{1}}^v)=({f_{1}}^h)^2\bar{g}_{M_{2}}(\partial_{\bar{1}},\partial_{\bar{1}})^v = - ({f_{1}}^h)^2,\\
			
			\bar{g}_{{\bar{2}} {\bar{2}}}=\bar{g}(\partial_{\bar{2}}^v,\partial_{\bar{2}}^v)=({f_{1}}^h)^2\bar{g}_{M_{2}}(\partial_{\bar{2}},\partial_{\bar{2}})^v = -({f_{1}}^h)^2 ,\\
			
			\bar{g}_{{\bar{1}} {\bar{2}} }=\bar{g}(\partial_{\bar{1}}^v,\partial_{\bar{2}}^v)=({f_{1}}^h)^2\bar{g}_{M_{2}}(\partial_{\bar{1}},\partial_{\bar{2}})^v = 0.
		\end{array}
		\right.
	\end{eqnarray} 
	Now, if $\tilde{g}^{f_{1}}$ is a cometric of  $\bar{g}^{f_{1}}$, then its local components are
  	\begin{eqnarray}
		\left\{
		\begin{array}{ll}
			\tilde{g}^{11}=\tilde{g}(dx_1^h,dx_1^h)=\tilde{g}_{M_{1}}(dx_1,dx_1)^h=-1,\\
			\tilde{g}^{22}=\tilde{g}(dx_2^h,dx_2^h)=\tilde{g}_{M_{1}}(dx_2,dx_2)^h=1,\\
			\tilde{g}^{12}=\tilde{g}(dx_1^h,dx_2^h)=0,\\
			\tilde{g}^{1{\bar{1}}}=\tilde{g}(dx_1^h,d{\bar{x}_1}^v)=0,\:\tilde{g}^{2{\bar{1}}}=\tilde{g}(dx_2^h,d{\bar{x}_1}^v)=0,\\
		\tilde{g}^{1{\bar{2}}}=\tilde{g}(dx_1^h,d{\bar{x}_2}^v)=0,\:\tilde{g}^{2 {\bar{1}}}=\tilde{g}(dx_2^h,d{\bar{x}_1}^v)=0,\\
			\tilde{g}^{{\bar{1}}{\bar{1}}}=\tilde{g}(d{\bar{x}_1}^v,d{\bar{x}_1}^v)=\frac{1}{({f_{1}}^h)^2}\tilde{g}_{M_{2}}(d{\bar{x}_1},d{\bar{x}_1})^v = - \frac{1}{({f_{1}}^h)^2},\\
			\tilde{g}^{\bar{2} \bar{2}}=\tilde{g}(d{\bar{x}_2}^v,d{\bar{x}_2}^v)=\frac{1}{(f^h)^2}\tilde{g}_{M_{2}}(d{\bar{x}_2},d{\bar{x}_2})^v = - \frac{1}{({f_{1}}^h)^2},\\
			\tilde{g}^{\bar{1}\bar{2}}=\tilde{g}(d{\bar{x}_1}^v,d{\bar{x}_2}^v)=\frac{1}{({f_{1}}^h)^2}\tilde{g}_{M_{2}}(d{\bar{x}_1},d{\bar{x}_2})^v = 0 = \tilde{g}^{\bar{2}\bar{1}}=\tilde{g}(d{\bar{x}_2}^v,d{\bar{x}_2}^v)
		\end{array}
		\right.
	\end{eqnarray}
	Let $\Pi_{H_{1}^2}=\Pi_{H_{1}^2}^{12}\frac{\partial}{\partial{x_1}}\wedge\frac{\partial}{\partial{x_2}}$, where $\Pi_{H_{1}^2}^{12} = \Pi_{H_{1}^2}(dx_{1}, dx_{2}) = \tilde{g}_{H_{1}^2} (J_{1}dx_{1}, dx_{2})$ and $\Pi_{E_{2}^2}=\Pi_{E_{2}^2}^{\bar{1}\bar{2}}\frac{\partial}{\partial{\bar{x}_{1}}}\wedge\frac{\partial}{\partial{\bar{x}_{2}}}$ are Poisson tensors on $M_{1}^2=H_{1}^2$ and $M_{2}^2=E_{2}^2$ respectively, such that  $\Pi=\Pi_{H_{1}^2}+\Pi_{S_{0}^2}$, $\nu = 1$ be a Poisson tensor on $(M = {H_{1}^2} \times _{f_{1}} {E_{2}^2},\tilde{g},\Pi)$. Then by using the relations \cite{8bpk}
	 
	\begin{align}\label{8.4.16}
\Gamma_k^{ij}&=\frac{1}{2}\sum_{l}\sum_{m}\tilde{g}_{mk}\Big(\Pi^{il}\frac{\partial \tilde{g}^{jm}}{\partial x_l}+\Pi^{jl}\frac{\partial \tilde{g}^{im}}{\partial x_l}-\Pi^{ml}\frac{\partial \tilde{g}^{ij}}{\partial x_l}-\tilde{g}^{li}\frac{\partial\Pi^{jm}}{\partial x_l}-\tilde{g}^{lj}\frac{\partial\Pi^{im}}{\partial x_l}\Big)\nonumber\\
&+\frac{1}{2}\frac{\partial\Pi^{ij}}{\partial x_k},
\end{align}
and,
\begin{equation}\label{8.4.17}
\mathcal{D}_{dx_i}dx_j=\Gamma_k^{ij}dx_k,
\end{equation}

we have obtained the Christoffel symbol and Levi-Civita connections associated to pairs $\left( \Pi_{H_{1}^2}, \tilde{g}_{H_{1}^2}\right) $  and $\left( \Pi_{E_{2}^2}, \tilde{g}_{E_{2}^2}\right) $. The Christoffel symbols on $\left( \Pi_{H_{1}^2}, \tilde{g}_{H_{1}^2}\right) $  and $\left( \Pi_{E_{2}^2}, \tilde{g}_{E_{2}^2}\right) $, are
\begin{align}\label{8.4.18}
&\Gamma_1^{11}=0,\quad\Gamma_1^{12}=\frac{\partial\Pi_{H_{1}^2}^{12}}{\partial x_1},\quad\Gamma_1^{21}=0,\quad\Gamma_1^{22}=-\frac{\partial\Pi_{H_{1}^2}^{12}}{\partial x_2},\nonumber\\
&\Gamma_2^{11}=\frac{\partial\Pi_{H_{1}^2}^{12}}{\partial x_1},\quad\Gamma_2^{12}=0,\quad\Gamma_2^{21}=-\frac{\partial\Pi_{H_{1}^2}^{12}}{\partial x_2},\quad\Gamma_2^{22}=0,
\end{align}
and
\begin{align}\label{8.4.19}
&\Gamma_{\bar{1}}^{{\bar{1}}{\bar{1}}}=0,\quad\Gamma_{\bar{1}}^{{\bar{1}}{\bar{2}}}=\frac{\partial\Pi_{E_{2}^2}^{{\bar{1}}{\bar{2}}}}{\partial \bar{x}_{1}},\quad\Gamma_{\bar{1}}^{{\bar{2}}{\bar{1}}}=0,\quad\Gamma_{\bar{1}}^{{\bar{2}}{\bar{2}}}= \frac{\partial\Pi_{E_{2}^2}^{{\bar{1}}{\bar{2}}}}{\partial  {\bar{x}_{2}}},\nonumber\\
&\Gamma_{\bar{2}}^{{\bar{1}}{\bar{1}}}= - \frac{\partial\Pi_{E_{2}^2}^{{\bar{1}}{\bar{2}}}}{\partial {\bar{x}_{1}}},\quad\Gamma_{\bar{2}}^{{\bar{1}}{\bar{2}}}=0,\quad\Gamma_{\bar{2}}^{{\bar{2}}{\bar{1}}}=-\frac{\partial\Pi_{E_{2}^2}^{{\bar{1}}{\bar{2}}}}{\partial {\bar{x}_{2}}},\quad\Gamma_{\bar{2}}^{{\bar{2}}{\bar{2}}}= 0.
\end{align}
 
The Levi-Civita connections for$\left( \Pi_{H_{1}^2}, \tilde{g}_{H_{1}^2}\right) $  and $\left( \Pi_{E_{2}^2}, \tilde{g}_{E_{2}^2}\right) $, from equations (\ref{8.4.17}), (\ref{8.4.18}) and (\ref{8.4.19}), are 
\begin{align}\label{8.4.20}
&\mathcal{D}^{H_{1}^2}_{dx_1}dx_1=\frac{\partial\Pi_{{H_{1}^2}}^{12}}{\partial x_1}dx_2,\quad\mathcal{D}^{H_{1}^2}_{dx_1}dx_2=\frac{\partial\Pi_{H_{1}^2}^{12}}{\partial x_1}dx_1,\nonumber\\
&\mathcal{D}^{H_{1}^2}_{dx_2}dx_1=-\frac{\partial\Pi_{H_{1}^2}^{12}}{\partial x_2}dx_2,\quad\mathcal{D}^{H_{1}^2}_{dx_2}dx_2=-\frac{\partial\Pi_{H_{1}^2}^{12}}{\partial x_2}dx_1,
\end{align}
and
\begin{align}\label{8.4.21}
&\mathcal{D}^{E_{2}^2}_{d{\bar{x}_1}}d{\bar{x}_1}= - \frac{\partial\Pi_{E_{2}^2}^{\bar{1} \bar{2}}}{\partial {\bar{x}_1}}d{\bar{x}_2},\quad\mathcal{D}^{E_{2}^2}_{d{\bar{x}_1}}d{\bar{x}_2}=\frac{\partial\Pi_{E_{2}^2}^{\bar{1} \bar{2}}}{\partial {\bar{x}_1}}d{\bar{x}_1},\nonumber\\
&\mathcal{D}^{E_{2}^2}_{d{\bar{x}_2}}d{\bar{x}_1}=-\frac{\partial\Pi_{E_{2}^2}^{\bar{1} \bar{2}}}{\partial {\bar{x}_2}}dt{\bar{x}_2},\quad\mathcal{D}^{E_{2}^2}_{d{\bar{x}_2}}d{\bar{x}_2}=\frac{\partial\Pi_{E_{2}^2}^{\bar{1}\bar{2}}}{\partial {\bar{x}_2}}d{\bar{x}_1}`.
\end{align}

Therefore from equation (\ref{8.2.2}), the sectional curvatures of $H_{1}^2$ for a plane spanned $\lbrace dx_{1}, dx_{2} \rbrace$, and  for a plane spanned $\lbrace d\bar{x}_{1}, d\bar{x}_{2} \rbrace$ of ${E_{2}^2}$ are
\begin{equation}\label{8.4.22}
\mathcal{K}^{H_{1}^2}(dx_{1}, dx_{2}) = \Pi^{12}_{H_{1}^2}\left[ \frac{\partial^2 \Pi^{12}_{H_{1}^2}}{\partial{ x_{1}}^2} - \frac{\partial^2 \Pi^{12}_{H_{1}^2}}{\partial{ x_{2}}^2}\right] - \left( \frac{\partial \Pi^{12}_{H_{1}^2}}{\partial{ x_{1}}}\right)^2 + \left( \frac{\partial \Pi^{12}_{H_{1}^2}}{\partial{ x_{2}}}\right)^2  
\end{equation}
and
\begin{equation}\label{8.4.23}
\mathcal{K}^{{E_{2}^2}}(d\bar{x}_{1}, d\bar{x}_{2}) = \left( \frac{\partial \Pi^{\bar{1}\bar{2}}_{E_{2}^2}}{\partial{ \bar{x}_{1}}}\right)^2 + \left( \frac{\partial \Pi^{\bar{1}\bar{2}}_{E_{2}^2}}{\partial{ \bar{x}_{2}}}\right)^2    - \Pi^{\bar{1}\bar{2}}_{E_{2}^2}\left[ \frac{\partial^2 \Pi^{\bar{1}\bar{2}}_{E_{2}^2}}{\partial{ \bar{x}_{1}}^2} + \frac{\partial^2 \Pi^{\bar{1}\bar{2}}_{E_{2}^2}}{\partial{ \bar{x}_{2}}^2}\right], 
\end{equation}
respectively. Also corresponding to $\tilde{g}^{H_{1}^2}$- orthonormal frame $\lbrace dx_{1}, dx_{2} \rbrace$, we have

 \begin{equation}\label{8.4.24}
 \begin{cases}
  J_{1}dx_{1} = \Pi^{12}_{H_{1}^2}dx_{2},  \; J_{1}dx_{2} = \Pi^{12}_{H_{1}^2}dx_{1},   \;J_{1}df_{1} = \Pi^{12}_{H_{1}^2}\left[\frac{\partial f_{1}}{\partial x_{1}} dx_{2} + \frac{\partial f_{1}}{\partial x_{2}} dx_{1} \right] ,\\\\
 \tilde{g}_{1}\left( \mathcal{D}_{dx_{1}}^1 J_{1} df_{1}, dx_{1}\right) = - \Pi^{12}_{H_{1}^2}\left[\frac{\partial \Pi^{12}_{H_{1}^2}}{\partial{ x_{1}}} \frac{\partial f_{1}}{\partial{ x_{1}}} + \frac{\partial \Pi^{12}_{H_{1}^2}}{\partial{ x_{2}}} \frac{\partial f_{1}}{\partial{ x_{2}}} + \Pi^{12}_{H_{1}^2} \frac{\partial^2 f_{1}}{\partial{ x_{2}}^2} .\right], \\\\
  \tilde{g}_{1}\left( \mathcal{D}_{dx_{2}}^1 J_{1} df_{1}, dx_{2}\right) = - \Pi^{12}_{H_{1}^2}\left[ \frac{\partial \Pi^{12}_{H_{1}^2}}{\partial{ x_{1}}} \frac{\partial f_{1}}{\partial{ x_{1}}} + \frac{\partial \Pi^{12}_{H_{1}^2}}{\partial{ x_{2}}} \frac{\partial f_{1}}{\partial{ x_{2}}} + \Pi^{12}_{H_{1}^2} \frac{\partial^2 f_{1}}{\partial{ x_{1}}^2} \right].
 \end{cases}
 \end{equation}
 Using the above equations in (\ref{8.4.11}), we get the qualar curvature at some point $p \in M$,
 \begin{align}\label{8.4.25}
\nonumber \mathcal{Q}(p) &= 2 \left\lbrace \Pi^{12}_{H_{1}^2}\left[ \frac{\partial^2 \Pi^{12}_{H_{1}^2}}{\partial{ x_{1}}^2} - \frac{\partial^2 \Pi^{12}_{H_{1}^2}}{\partial{ x_{2}}^2}\right] - \left( \frac{\partial \Pi^{12}_{H_{1}^2}}{\partial{ x_{1}}}\right)^2 + \left( \frac{\partial \Pi^{12}_{H_{1}^2}}{\partial{ x_{2}}}\right)^2 \right\rbrace^h \\
\nonumber &+ 2\left(f_{1}^2 \right)^h  \left\lbrace  \left( \frac{\partial \Pi^{\bar{1}\bar{2}}_{E_{2}^2}}{\partial{ \bar{x}_{1}}}\right)^2 + \left( \frac{\partial \Pi^{\bar{1}\bar{2}}_{E_{2}^2}}{\partial{ \bar{x}_{2}}}\right)^2    - \Pi^{\bar{1}\bar{2}}_{E_{2}^2}\left[ \frac{\partial^2 \Pi^{\bar{1}\bar{2}}_{E_{2}^2}}{\partial{ \bar{x}_{1}}^2} + \frac{\partial^2 \Pi^{\bar{1}\bar{2}}_{E_{2}^2}}{\partial{ \bar{x}_{2}}^2}\right]  \right\rbrace^v \\
\nonumber &+ \left( \frac{2 \Pi^{12}_{H_{1}^2}}{f_{1}^2} \right)^h  \left\lbrace 5 \left(  \frac{\partial f_{1}}{\partial{ x_{1}}}\right)^2  - \left(  \frac{\partial f_{1}}{\partial{ x_{2}}}\right)^2  + f_{1}^h\left(\frac{\partial^{2} f_{1}}{\partial x_{1}^2} - \frac{\partial^{2} f_{1}}{\partial x_{2}^2} \right)  \right\rbrace ^h\\
& - \left(\frac{4}{f_{1}^2} \right)^h  \left\lbrace (\Pi_{H_{1}^2}^{12})^4 \left( \frac{\partial^{2} f_{1}}{\partial x_{1}^2} - \frac{\partial^{2} f_{1}}{\partial x_{2}^2} \right)^2\right\rbrace^h.
 \end{align}
\end{example} 

\begin{remark}
 Let $(M= H_{1}^2\times_{f_{1}} E_{2}^2, \Pi^{\nu_1},\tilde{g}^{f_{1}})$, be a contravariant  warped product space such that $\Pi_{H_{1}^2}=\Pi_{H_{1}^2}^{12}\frac{\partial}{\partial{x_1}}\wedge\frac{\partial}{\partial{x_2}}$, $\Pi_{H_{1}^2}^{12} = c x_{1}$ and $\Pi_{E_{2}^2}=\Pi_{E_{2}^2}^{\bar{1}\bar{2}}\frac{\partial}{\partial{\bar{x}_{1}}}\wedge\frac{\partial}{\partial{\bar{x}_{2}}}$, $\Pi_{E_{2}^2}^{\bar{1}\bar{2}} = c \bar{x}_{1}$ are Poisson tensors on $H_{1}^2$ and $E_{2}^2$ respectively. Then equations (\ref{8.4.21}), (\ref{8.4.22}) and (\ref{8.4.23}) reduced to
$\mathcal{K}^1(d\bar{x}_{1}, d\bar{x}_{2}) = -c^2$, $\mathcal{K}^2(d\bar{x}_{1}, d\bar{x}_{2}) = c^2$, where $c$ is some constant, and qualar curvature is
\begin{align*}
\mathcal{Q}(p) = - 2 c^2 + 2 c^2 f_{1}^h +  2 c \left\lbrace 5 \left(  \frac{\partial f_{1}}{\partial{ x_{1}}}\right)^2  - \left(  \frac{\partial f_{1}}{\partial{ x_{2}}}\right)^2  + f_{1}^h\left(\frac{\partial^{2} f_{1}}{\partial x_{1}^2} - \frac{\partial^{2} f_{1}}{\partial x_{2}^2} \right)  \right\rbrace ^h\\
 - \left(\frac{4}{f_{1}^2} \right)^h  \left\lbrace (cx_{1})^4 \left( \frac{\partial^{2} f_{1}}{\partial x_{1}^2} - \frac{\partial^{2} f_{1}}{\partial x_{2}^2} \right)^2\right\rbrace^h. 
\end{align*}
If, we consider $f_{1} = constant = a$, then $\mathcal{Q}(p) = a_{1} \geq 0$, $a_{1} \in R$. 
\end{remark}

\begin{example}
Let $(M= H_{1}^2\times_{f_{1}} S_{0}^2, \Pi^{\nu_1},\tilde{g}^{f_{1}})$, be a contravariant  warped Product space. Then for  warped metric $\bar{g}^{f_{1}^h} = -dx_{1}^2 + dx_{2}^2 + (f_{1})^2 \left( d\theta^2 + sin^2(\theta) d\phi^2 \right)$,
  the local components of $\bar{g}^{f_{1}}$, are
  	\begin{eqnarray}
		\left\{
		\begin{array}{ll}
			\bar{g}_{11}=\bar{g}(\partial_1^h,\partial_1^h)=\bar{g}_{M_{1}}(\partial_1,\partial_1)^h=-1,\\
			
			\bar{g}_{22}=\bar{g}(\partial_2^h,\partial_2^h)=\bar{g}_{M_{1}}(\partial_2,\partial_2)^h=1,\\
			
			\bar{g}_{12}=\bar{g}(\partial_1^h,\partial_2^h)=0,\\
			
			\bar{g}_{1\theta}=\bar{g}(\partial_1^h,\partial_\theta^v)=0,\:\bar{g}_{2\theta}=\bar{g}(\partial_2^h,\partial_\theta^v)=0,\\
			
			\bar{g}_{1\phi}=\bar{g}(\partial_1^h,\partial_\phi^v)=0,\:\bar{g}_{2\phi}=\bar{g}(\partial_2^h,\partial_\phi^v)=0,\\
			
			\bar{g}_{\theta \theta}=\bar{g}(\partial_\theta^v,\partial_\theta^v)=({f_{1}}^h)^2\bar{g}_{M_{2}}(\partial_\theta,\partial_\theta)^v = ({f_{1}}^h)^2,\\
			
			\bar{g}_{\phi \phi}=\bar{g}(\partial_\phi^v,\partial_\phi^v)=({f_{1}}^h)^2\bar{g}_{M_{2}}(\partial_\phi,\partial_\phi)^v = ({f_{1}}^h)^2 (sin^2(\theta))^v,\\
			
			\bar{g}_{\theta \phi}=\bar{g}(\partial_\theta^v,\partial_\phi^v)=({f_{1}}^h)^2\bar{g}_{M_{2}}(\partial_\theta,\partial_\phi)^v.
		\end{array}
		\right.
	\end{eqnarray} 
	If $\bar{g}^{f_{1}}$ is a cometric of  $\tilde{g}^{f_{1}}$, then its local components are
  	\begin{eqnarray}
		\left\{
		\begin{array}{ll}
			\tilde{g}^{11}=\tilde{g}(dx_1^h,dx_1^h)=\tilde{g}_{M_{1}}(dx_1,dx_1)^h=-1,\\
			\tilde{g}^{22}=\tilde{g}(dx_2^h,dx_2^h)=\tilde{g}_{M_{1}}(dx_2,dx_2)^h=1,\\
			\tilde{g}^{12}=\tilde{g}(dx_1^h,dx_2^h)=0,\\
			\tilde{g}^{1\theta}=\tilde{g}(dx_1^h,d\theta^v)=0,\:\tilde{g}^{2\theta}=\tilde{g}(dx_2^h,d\theta^v)=0,\\
		\tilde{g}^{1\phi}=\tilde{g}(dx_1^h,d\phi^v)=0,\:\tilde{g}^{2\phi}=\tilde{g}(dx_2^h,d\phi^v)=0,\\
			\tilde{g}^{\theta\theta}=\tilde{g}(d\theta^v,d\theta^v)=\frac{1}{({f_{1}}^h)^2}\tilde{g}_{M_{2}}(d\theta,d\theta)^v = \frac{1}{({f_{1}}^h)^2},\\
			\tilde{g}^{\phi\phi}=\tilde{g}(d\phi^v,d\phi^v)=\frac{1}{(f^h)^2}\tilde{g}_{M_{2}}(d\phi,d\phi)^v = \frac{1}{({f_{1}}^h)^2(sin^2(\theta))},\\
			\tilde{g}^{\theta\phi}=\tilde{g}(d\theta^v,d\phi^v)=\frac{1}{({f_{1}}^h)^2}\tilde{g}_{M_{2}}(d\theta,d\phi)^v = 0 = \tilde{g}^{\theta\phi}=\tilde{g}(d\phi^v,d\theta^v).
		\end{array}
		\right.
	\end{eqnarray}
	Let $\Pi_{H_{1}^2}=\Pi_{H_{1}^2}^{12}\frac{\partial}{\partial{x_1}}\wedge\frac{\partial}{\partial{x_2}}$ and $\Pi_{S_{0}^2}=\Pi_{S_{0}^2}^{\theta \phi}\frac{\partial}{\partial{\theta}}\wedge\frac{\partial}{\partial{\phi}}$ Poisson tensors on $M_{1}^2=H_{1}^2$ and $M_{2}^2=S_{0}^2$ respectively, such that  $\Pi=\Pi_{H_{1}^2}+\Pi_{S_{0}^2}$, $\nu = 1$ be a Poisson tensor on $(M = {H_{1}^2} \times _{f_{1}} {S_{0}^2},\tilde{g},\Pi)$.  Then the Christoffel symbols on $\left( \Pi_{H_{1}^2}, \tilde{g}_{H_{1}^2}\right) $  and $\left( \Pi_{S_{0}^2}, \tilde{g}_{S_{0}^2}\right) $, are 
\begin{align}
&\Gamma_{\theta}^{{\theta}{\theta}}=0,\quad\Gamma_{\theta}^{{\theta}{\phi}}=\frac{\partial\Pi_{S_{0}^2}^{{\theta}{\phi}}}{\partial t_{\theta}},\quad\Gamma_{\theta}^{{\phi}{\theta}}=0,\quad\Gamma_{\theta}^{{\phi}{\phi}}=\frac{1}{\sin^2(\theta)}\frac{\partial\Pi_{S_{0}^2}^{{\theta}{\phi}}}{\partial  {\phi}},\nonumber\\
&\Gamma_{\phi}^{{\theta}{\theta}}= - \sin^2(\theta)\frac{\partial\Pi_{S_{0}^2}^{{\theta}{\phi}}}{\partial {\theta}},\quad\Gamma_{\phi}^{{\theta}{\phi}}=0,\quad\Gamma_{\phi}^{{\phi}{\theta}}=-\frac{\partial\Pi_{S_{0}^2}^{{\theta}{\phi}}}{\partial {\phi}},\quad\Gamma_{\phi}^{{\phi}{\phi}}=  \cot(\theta) \Pi_{S_{0}^2}^{{\theta}{\phi}}.
\end{align}
 The Levi-Civita connection on $\left( \Pi_{H_{1}^2}, \tilde{g}_{H_{1}^2}\right) $ is obtained in equation (\ref{8.4.19}), whereas for $\left( \Pi_{S_{0}^2}, \tilde{g}_{S_{0}^2}\right) $, we have
 \begin{align}\label{8.4.29}
&\mathcal{D}^{S_{0}^2}_{d{\theta}}d{\theta}= - sin^2(\theta)\frac{\partial\Pi_{S_{0}^2}^{\theta \phi}}{\partial {\theta}}d{\phi},
\quad\mathcal{D}^{S_{0}^2}_{d{\theta}}d{\phi}=\frac{\partial\Pi_{S_{0}^2}^{\theta \phi}}{\partial \theta}d{\theta},\nonumber\\
&\mathcal{D}^{S_{0}^2}_{d{\phi}}d{\theta}=-\frac{\partial\Pi_{S_{0}^2}^{\theta \phi}}{\partial {\phi}}dt{\phi},
\quad\mathcal{D}^{S_{0}^2}_{d{\phi}}d{\phi}=\frac{1}{sin^2(\theta)}\frac{\partial\Pi_{S_{0}^2}^{\theta\phi}}{\partial {\phi}}d{\theta} + cot(\theta) \Pi^{\theta \phi} d\phi.
\end{align}

Thus the sectional curvature of $S_{0}^2$ for a plane spanned by $\lbrace d\theta, d\phi \rbrace$,
 \begin{align}
 \nonumber \mathcal{K}^{S_{0}^2}\left( d\theta, d\phi\right) &= \Pi_{S_{0}^2}^{\theta\phi}\left[  sin^2(\theta)\frac{\partial^2\Pi_{S_{0}^2}^{\theta\phi}}{\partial \theta} + \frac{\partial^2\Pi_{S_{0}^2}^{\theta\phi}}{\partial \phi}  - \frac{sin(2\theta) }{2} \frac{\partial\Pi_{S_{0}^2}^{\theta\phi}}{\partial \theta} \right]\\
 &\qquad\qquad\qquad -\left[sin^2(\theta)\left(\frac{\partial\Pi_{S_{0}^2}^{\theta\phi}}{\partial \theta}\right)^2  + \left( \frac{\partial\Pi_{S_{0}^2}^{\theta\phi}}{\partial \phi}\right)^2  \right], 
 \end{align}
and, from equation (\ref{8.4.13}), we get  qualar curvature on $M$ 
\begin{align}\label{8.4.30}
\nonumber& \mathcal{Q}(p) = 2 \left\lbrace \Pi^{12}_{H_{1}^2}\left[ \frac{\partial^2 \Pi^{12}_{H_{1}^2}}{\partial{ x_{1}}^2} - \frac{\partial^2 \Pi^{12}_{H_{1}^2}}{\partial{ x_{2}}^2}\right] - \left( \frac{\partial \Pi^{12}_{H_{1}^2}}{\partial{ x_{1}}}\right)^2 + \left( \frac{\partial \Pi^{12}_{H_{1}^2}}{\partial{ x_{2}}}\right)^2 \right\rbrace^h \\
&- \left( \frac{4 \Pi^{12}_{H_{1}^2}}{f_{1}}\right)^h \left\lbrace \frac{\partial \Pi^{12}_{H_{1}^2}}{\partial{ x_{1}}} \frac{\partial f_{1}}{\partial{ x_{1}}} + \frac{\partial \Pi^{12}_{H_{1}^2}}{\partial{ x_{2}}} \frac{\partial f_{1}}{\partial{ x_{2}}} + \Pi^{12}_{H_{1}^2} \frac{\partial^2 f_{1}}{\partial{ x_{2}}^2} \right\rbrace^h  + \left( \frac{8 (\Pi^{12}_{H_{1}^2})^2 }{f_{1}^2} \right)^h \left\lbrace \left( \frac{\partial f_{1}}{\partial x_{2}}\right)^2 \right\rbrace^h  
 \end{align}

\end{example}

\end{document}